\theoremstyle{plain}
\newtheorem{theorem}{Theorem}
\newtheorem{corollary}{Corollary}
\newtheorem{lemma}[theorem]{Lemma}
\newtheorem{proposition}[theorem]{Proposition}
\newtheorem{theoalph}{Theorem}
\theoremstyle{definition}
\newtheorem{remark}[theorem]{Remark}
\newtheorem{question}{Question}
\newtheoremstyle{citing}
  {3pt}
  {3pt}
  {\itshape}
  {}
  {\bfseries}
  {.}
  {.5em}
  {\thmnote{#3}}
\theoremstyle{citing}
\newtheorem*{generic}{}
\newcommand{\RR}{\mathbb{R}}
\newcommand{\CC}{\mathbb{C}}
\newcommand{\NN}{\mathbb{N}}
\newcommand{\ZZ}{\mathbb{Z}}
\def\M{{\mathcal M}}
\def\P{{\mathcal P}}
\newcommand{\parameter}{\lambda}
\newcommand{\expansion}[1]{\langle #1 \rangle}
\newcommand{\pcs}{post\nobreakdash-critical set}
\newcommand{\pcss}{post\nobreakdash-critical sets}
\newcommand{\sE}{\mathscr{E}}
\newcommand{\sP}{\mathscr{P}}
\newcommand{\Choquet}{\mathscr{C}}
\newcommand{\Sunimodal}{\textrm{S}\nobreakdash-unimodal}
\renewcommand{\=}{ : = }
\newcommand{\partn}[1]{{\smallskip \noindent \textbf{#1.}}}
\begin{document}

\title[Minimal post-critical sets]{Invariant measures of minimal post-critical sets of logistic maps}

\author[M.I. Cortez]{Mar{\'\i}a Isabel Cortez$^\dag$}
\author[J. Rivera-Letelier]{Juan Rivera-Letelier$^\ddag$}
\thanks{$\dag$Partially supported by Fondecyt de Iniciaci{\'o}n 11060002, Nucleus Millenius P04-069-F, and Research Network on Low Dimensional Dynamics, PBCT ACT-17, CONICYT, Chile.} 
\thanks{$\ddag$Partially supported by Research Network on Low Dimensional Dynamics, PBCT ACT-17, CONICYT, Chile.} 
\address{Mar{\'\i}a Isabel Cortez, Departamento de Matem{\'a}tica y Ciencia de la Computaci{\'o}n, Universidad de Santiago de Chile, Av. Libertador Bernardo O'Higgins~3363, Santiago, Chile.}
\email{mcortez@usach.cl}
\address{Juan Rivera-Letelier, Facultad de Matem{\'a}ticas, Campus San Joaqu{\'\i}n, P. Universidad Cat{\'o}lica de Chile, Avenida Vicu{\~n}a Mackenna~4860, Santiago, Chile}
\email{riveraletelier@mat.puc.cl}

\subjclass[2000]{Primary:  37E05; Secondary: 37A99, 37B10, 54H20}
\begin{abstract}
We construct logistic maps whose restriction to the $\omega$\nobreakdash-limit set of its critical point is a minimal Cantor system having a prescribed number of distinct ergodic and invariant probability measures.
In fact, we show that every metrizable Choquet simplex whose set of extreme points is compact and totally disconnected can be realized as the set of invariant probability measures of a minimal Cantor system corresponding to the restriction of a logistic map to the $\omega$\nobreakdash-limit set of its critical point.
Furthermore, we show that such a logistic map~$f$ can be taken so that each such invariant measure has zero Lyapunov exponent and is an equilibrium state of~$f$ for the potential~$-\ln |f'|$.
\end{abstract}
\keywords{Logistic maps, \pcs, minimal Cantor systems, generalized odometers, invariant measures, equilibrium states.}

\maketitle{}
\section{Introduction}
The \textit{logistic family} of maps is defined for parameters $\parameter \in (0, 4]$, by
\begin{center}
\begin{tabular}{rcl}
$f_\parameter : [0, 1]$ & $\to$ & $[0, 1]$ \\
$x$ & $\mapsto$ & $\parameter x (1 - x).$
\end{tabular}
\end{center}
For each $\parameter \in (0, 4]$, the point $x = \tfrac{1}{2}$ is the unique point in $[0, 1]$ at which the derivative of~$f_{\parameter}$ vanishes.
We call $x = \tfrac{1}{2}$ the \textit{critical point} of~$f_\parameter$, and its $\omega$\nobreakdash-limit set is called the \textit{\pcs{}} of~$f_{\parameter}$.
It is a compact set that is forward invariant by~$f_{\parameter}$.

The following is our main result.
Recall that for a topological space~$X$, a continuous map $T : X \to X$ is said to be \textit{minimal}, if the forward orbit of each point in~$X$ is dense in~$X$.
\begin{generic}[Main Theorem]
Let~$\sE$ be a non-empty, compact, metrizable and totally disconnected topological space.
Then there is a parameter~$\parameter \in (0, 4]$ such that the \pcs{} of the logistic map~$f_{\parameter}$ is a Cantor set, the restriction of~$f_{\parameter}$ to this set is minimal, and such that the set of ergodic and invariant probability measures supported on this set, endowed with the weak$^*$ topology, is homeomorphic to~$\sE$.
\end{generic}
\begin{remark}
We have stated this result for the logistic family for simplicity.
We show that an analogous statement holds for each full family of unimodal maps, as well as for the family of symmetric tent maps.
See~\S\ref{ss:unimodal} and~\S\ref{ss:full families} for definitions, and Remark~\ref{r:tent family} for the proof in the case of the family of symmetric tent maps.
\end{remark}

The Main Theorem generalizes a result of H.~Bruin, that there is a parameter~$\parameter_0 \in (0, 4]$ such that the \pcs{} of~$f_{\parameter_0}$ is a Cantor set, and such that the restriction of~$f_{\parameter_0}$ to this set is minimal and possess at least two ergodic and invariant probability measures~\cite[Theorem~4]{Bru03}.
The Main Theorem shows that in fact~$\parameter_0$ can be taken so that the cardinality of the set of these measures is any prescribed finite number (taking~$\sE$ finite, of a given cardinality), countably infinite (taking for example $\sE = \{ \tfrac{1}{n} \mid n \ge 1 \} \cup \{ 0 \} \subset \RR$), or uncountable (taking for example~$\sE$ to be a Cantor set).\footnote{See~\S\ref{ss:notes and references} for an explicit description of the combinatorics of the logistic maps realizing these examples.}
The proof of the Main Theorem is based on the tools developed by Bruin in~\cite{Bru03}, and by Bruin, G.~Keller and M.~St.~Pierre in~\cite{BruKelsPi97}.

There are three sources of motivations for the Main Theorem.
The first is a result of T.~Downarowicz~\cite{Dow91}, that each metrizable Choquet simplex can be realized, up to an affine homeomorphism, as the space of invariant probability measures of a minimal map acting on a Cantor set;\footnote{A compact, convex and metrizable subset~$\Choquet$ of a locally convex real vector space is said to be a (metrizable) \textit{Choquet simplex}, if for each $v \in \Choquet$ there is a unique probability measure~$\mu$ that is supported on the set of extreme points of~$\Choquet$, and such that $\int x d\mu(x) = v$.
See for example~\cite[{\S}II.3]{Alf71} for several characterizations of Choquet simplex.
A well-known consequence of the ergodic decomposition theorem is that for each compact topological space~$X$ and each continuous map $T : X \to X$, the space of invariant probability measures of~$T$ is a metrizable Choquet simplex, see for example~\cite[p.~95]{Gla03}.} see also~\cite{GjeJoh00,Orm97}.
Thus the following question arises naturally.
\begin{question}\label{q:general simplex}
Is each metrizable Choquet simplex realizable, up to an affine homeomorphism, as the space of invariant probability measures of a minimal \pcs?
\end{question}
The Main Theorem gives a partial answer to this question.
In fact, it is well\nobreakdash-known that for each non-empty compact (metrizable) topological space~$\sE$ there is a unique (metrizable) Choquet simplex, up to an affine homeomorphism, whose set of extreme points is homeomorphic to~$\sE$.\footnote{In fact, if~$\mathscr{F}$ is a compact topological space then the space of probability measures supported on~$\mathscr{F}$ is a Choquet simplex whose set of extreme points is homeomorphic to~$\mathscr{F}$.
The uniqueness follows from a result of Bauer, that if~$\Choquet$ is a compact Choquet simplex whose set of extreme points is closed, then~$\Choquet$ is affine homeomorphic to the space of probability measures supported on the set of extreme points of~$\Choquet$. These results can be found for example in~\cite[Corollary~II.4.2]{Alf71}.}
So the Main Theorem gives a positive answer to Question~\ref{q:general simplex} in the case of metrizable Choquet simplices whose set of extreme points is compact and totally disconnected.

The second source of motivation comes from the ergodic theory of smooth maps.
Pesin theory is a powerful tool to study (ergodic) invariant measures with positive Lyapunov exponents.
However, in general there are few tools to study ergodic invariant measures with a zero Lyapunov exponent.
We say that an invariant measure of a logistic map is \textit{indifferent}, if it is ergodic and if its Lyapunov exponent is zero.
Using well-known results we show that the parameter~$\parameter \in (0, 4]$ in the Main Theorem can be taken such that in addition every ergodic invariant measure supported on the \pcs{} of~$f_\parameter$ is indifferent; see Appendix~\ref{a:ergodic theory}.
Thus the Main Theorem shows in particular that there is a logistic map having infinitely many distinct indifferent invariant probability measures, a result that to the best of our knowledge was unknown before.

The third source of motivation comes from the thermodynamic formalism.
Fix a parameter $\parameter \in (0, 4]$ and for a given invariant measure~$\mu$ denote by $h_\mu(f_\parameter)$ its measure theoretic entropy.
Recall that an invariant measure~$\mu$ is an \textit{equilibrium state} for the potential~$- \ln |f_{\parameter}'|$ if it realizes the supremum
$$ \sup \left\{ h_{\mu_0} - \int \ln |f'| d \mu_0 \right\}, $$
where $\mu_0$ runs through all the invariant probability measures of~$f_\parameter$.
Using well-known results we show that the parameter $\parameter \in (0, 4]$ in the Main Theorem can be taken such that in addition each invariant probability measure supported on the \pcs{} of~$f_\parameter$ is an equilibrium state for the potential $- \log |f_\parameter'|$; see Appendix~\ref{a:ergodic theory}.
Thus the Main Theorem provides examples of logistic maps having a large set of distinct ergodic equilibrium states, in sharp contrast with the (recent) related uniqueness results; see~\cite{BruKel98,BruTod07b,MakSmi03,PesSen08,PrzRiv08} and references therein.

We state these results in the following corollary for future reference.
The proof is a direct consequence of the proof of the Main Theorem and well\nobreakdash-known results; see Appendix~\ref{a:ergodic theory}.
\begin{corollary}\label{c:ergodic theory interval}
Let~$\sE$ be a non-empty, compact, metrizable and totally disconnected topological space.
Then there is~$\parameter \in (0, 4]$ verifying the conclusions of the Main Theorem and such that in addition the set of indifferent invariant probability measures of~$f_\parameter$ (resp. indifferent equilibrium states of~$f_\parameter$ for the potential $- \log |f_\parameter'|$) is homeomorphic to~$\sE$.
\end{corollary}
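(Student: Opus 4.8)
The plan is to take $\parameter$ to be precisely the parameter furnished by the proof of the Main Theorem, and to deduce the two additional properties from standard results on \Sunimodal{} maps together with one feature of the construction. Write $K$ for the \pcs{} of $f_\parameter$, which by the Main Theorem is a minimal Cantor set, and for an ergodic invariant measure $\mu$ of $f_\parameter$ put $\lambda(\mu) \= \int \ln |f_\parameter'| \, \d\mu$. Let $\M_K$ be the set of ergodic invariant probability measures of $f_\parameter$ supported on $K$; by the Main Theorem, $\M_K$ with the weak$^*$ topology is homeomorphic to $\sE$. It then suffices to prove that both the set of indifferent invariant probability measures of $f_\parameter$ and the set of its indifferent equilibrium states for $-\ln|f_\parameter'|$ coincide with $\M_K$.

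I would first check that every $\mu\in\M_K$ is indifferent. Since the construction produces a map with neither an attracting nor a neutral periodic orbit, $\lambda(\mu)\ge 0$ for every invariant measure $\mu$ of $f_\parameter$, by a well-known result. For the reverse inequality when $\mu$ is supported on $K$, the essential point is that in the proof of the Main Theorem the combinatorial data defining $f_\parameter$ — in particular its cutting times — may be chosen to grow arbitrarily fast without altering any conclusion about $K$; taking them to grow fast enough makes the derivative $(f_\parameter^n)'(f_\parameter(\tfrac12))$ grow subexponentially. Because $K$ is minimal, the orbit of $\mu$-almost every point returns infinitely often to arbitrarily small neighbourhoods of the critical point, and applying the distortion estimates of~\cite{BruKelsPi97} along these return blocks bounds $\lambda(\mu)$ by the exponential growth rate of $(f_\parameter^n)'(f_\parameter(\tfrac12))$, which is zero by the choice above; hence $\lambda(\mu)\le 0$, so $\lambda(\mu)=0$ and $\mu$ is indifferent.

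Next I would exclude indifferent measures not supported on $K$. The non-wandering set of an \Sunimodal{} map without attracting or neutral periodic orbit is the disjoint union of $\omega(c)=K$ and a (possibly empty) compact uniformly hyperbolic set $H$ not containing the critical point; this is part of the classical structure theory of such maps (see, e.g., de~Melo--van~Strien). Every invariant measure is carried by the non-wandering set, so an ergodic measure of $f_\parameter$ that is not supported on $K$ is supported on $H$, whence $\lambda(\mu)>0$ and $\mu$ is not indifferent. Combined with the previous step, the set of indifferent invariant probability measures of $f_\parameter$ is exactly $\M_K$, which is homeomorphic to $\sE$.

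Finally I would treat the equilibrium states for $-\ln|f_\parameter'|$. By the Ruelle inequality $h_\mu(f_\parameter)\le\max\{0,\lambda(\mu)\}$ together with $\lambda\ge 0$, every invariant measure $\mu$ satisfies $h_\mu(f_\parameter)\le\lambda(\mu)$, so the supremum $P\=\sup_{\mu_0}\{h_{\mu_0}(f_\parameter)-\lambda(\mu_0)\}$ defining equilibrium states is $\le 0$; and since each $\mu\in\M_K$ has $\lambda(\mu)=0$, hence $h_\mu(f_\parameter)-\lambda(\mu)=h_\mu(f_\parameter)\ge 0$, in fact $P=0$. It follows that every $\mu\in\M_K$ has $0\le h_\mu(f_\parameter)\le\lambda(\mu)=0$, so $h_\mu(f_\parameter)-\lambda(\mu)=0=P$ and $\mu$ is an equilibrium state. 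Conversely, an equilibrium state that is indifferent has $\lambda(\mu)=0$ and is therefore supported on $K$ by the structural dichotomy above; hence the indifferent equilibrium states of $f_\parameter$ for $-\ln|f_\parameter'|$ form exactly $\M_K\cong\sE$. I expect the crux of the whole argument to be the bound $\lambda(\mu)\le 0$ for $\mu\in\M_K$: this is the one place that genuinely exploits the combinatorics of the construction — the freedom to make the cutting times, and hence the critical derivative, grow fast — via the distortion control of~\cite{BruKelsPi97}; the rest is a routine assembly of well-known facts about \Sunimodal{} maps.
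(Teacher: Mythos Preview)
Your overall architecture matches the paper's: reduce to showing that the indifferent measures (and indifferent equilibrium states) are exactly the ergodic measures supported on $K$. The equilibrium-state paragraph is essentially the paper's argument via Ruelle's inequality. The differences, and one real gap, are in the two Lyapunov-exponent steps.

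For $\lambda(\mu)=0$ on $K$, the paper does \emph{not} invoke any extra growth of the cutting times. It proves a clean equivalence (Lemma~\ref{l:ergodic theory interval}) valid for every \Sunimodal{} map with diverging kneading map: diverging $Q$ forces the critical point to be persistently recurrent \cite[Proposition~3.1]{Bru98}, and then \cite[Lemma~3.10]{BruKel98} gives directly that every invariant measure on $K$ has zero Lyapunov exponent (and zero entropy), hence is an equilibrium state. Your route---choosing the cutting times fast, deducing subexponential growth of $(f_\parameter^n)'(c_1)$, and then bounding $\lambda(\mu)$ by this growth rate via ``the distortion estimates of~\cite{BruKelsPi97}''---adds a hypothesis that is not needed and leaves the key inequality $\lambda(\mu)\le \limsup_n \tfrac{1}{n}\log|(f_\parameter^n)'(c_1)|$ unjustified; that reference does not supply such a comparison for an arbitrary ergodic measure on $K$. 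This is the one place where your argument is genuinely incomplete.

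For the converse, your structural claim that the non-wandering set splits as $K\sqcup H$ with $H$ compact and uniformly hyperbolic is not correct as stated: the repelling periodic orbits can accumulate on $K$, so the complement of $K$ in the non-wandering set need not be compact. The paper instead argues directly: if $\mu$ is ergodic and not supported on $K$, choose an interval $A$ with $\mu(A)>0$ whose doubled interval misses $K$; absence of wandering intervals forces the components of $f_\parameter^{-n}(A)$ to shrink, Koebe gives uniform expansion $|(f_\parameter^{n})'|>2$ on $f_\parameter^{-n}(A)$ for $n\ge N$, and then the ergodic theorem yields $\lambda(\mu)\ge \tfrac{\mu(A)}{N}\log 2>0$. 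Your conclusion is right, but the justification should be replaced by this (or by Ma\~n\'e's hyperbolicity for invariant sets bounded away from~$c$, applied to the support of $\mu$ restricted to a neighborhood-free region, not to a global decomposition).
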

\begin{remark}\label{r:ergodic theory holomorphic}
We will now state for future reference a complex version of this result.
Its proof is similar to that of Corollary~\ref{c:ergodic theory interval}.
For a complex parameter~$\parameter \in \CC$ denote by~$P_\parameter$ the quadratic polynomial defined by
$$ P_\parameter (z) = \parameter z (1 - z), $$
viewed as a dynamical system acting on~$\CC$.
Let~$\sE$ be a non-empty, compact, metrizable and totally disconnected topological space.
Then there is a parameter $\parameter \in (0, 4]$ satisfying the conclusions of the Main Theorem, and such that in addition, if we denote by~$t_0 > 0$ the Hausdorff dimension of the Julia set of~$P_\parameter$, then the set of indifferent invariant probability measures (resp. indifferent equilibrium states for the potential $- t_0 \log |P_\parameter'|$) of~$P_\parameter$ is homeomorphic to~$\sE$.
\end{remark}
\subsection{Notes and references}\label{ss:notes and references}
Given a non-empty, compact, metrizable and totally disconnected topological space~$\sE$, we construct a rather explicit kneading map~$Q$ so that the conclusion of the Main Theorem holds for each unimodal map whose kneading map is equal to~$Q$, see~\S\ref{s:resonant kneading}.
(Unimodal maps are defined in~\S\ref{ss:unimodal} and kneading maps in~\S\ref{ss:cutting and kneading}.)
In fact we show that~$Q$ may be taken so that $Q(\NN) = \{ 2^{3^r} - 1 \mid r \ge 0 \}$, and so that there is a strictly increasing sequence of integers $(b(r))_{r \ge 0}$ such that $b(0) = 0$, $Q^{-1}(0) = \left\{ 0, \ldots, 2^{3^{b(1)}} - 2 \right\}$, and such that for each $r \ge 1$ we have
$$ Q^{-1}\left( 2^{3^r} - 2 \right)
=
\left\{ 2^{3^{b(r)}} - 1, \ldots, 2^{3^{b(r + 1)}} - 2 \right\}. $$
Furthermore, we have the following.
\begin{enumerate}
\item[1.]
If~$\sE$ is finite, then we can take the sequence $(b(r))_{r \ge 0}$ such that $b(0) = 0$, and such that for each $r \ge 1$ we have $b(r) = r - 1 + \#\sE$.
\item[2.]
If~$\sE = \{ \tfrac{1}{n} \mid n \ge 1 \} \cup \{ 0 \} \subset \RR$, then we can take\footnote{Here, for $x \in \RR$ we denote by $[x]$ the integer part of~$x$.}
$$ (b(r))_{r \ge 0} = \left(r + \left[ \frac{ \sqrt{8r + 1} - 1}{2} \right] \right)_{r \ge 0}. $$
\item[3.]
If~$\sE$ is a Cantor set, then we can take $(b(r))_{r \ge 0} = (2r)_{r \ge 0}$.
\end{enumerate}

All the unimodal maps we consider have a diverging and non-decreasing kneading map.
We suspect that for such maps the set of ergodic measures supported on the \pcs{} is compact with respect to the weak$^*$ topology.
So it is likely that to answer Question~\ref{q:general simplex} affirmatively in the general case one should have to consider unimodal maps having a kneading map that is not monotone.

See~\cite{GamMar06} for the realization of some concrete simplices as the space of invariant measures of minimal Cantor systems.

\subsection{Strategy and organization}
In this section we explain the strategy of the proof of the Main Theorem and simultaneously describe the organization of the paper.

We use the fact that the \pcs{} of a unimodal map whose kneading map diverges is a Cantor set where the unimodal map is minimal (Proposition~\ref{p:continuous unimodal}).
We recall the definition of ``kneading map'', as well as other concepts and results about unimodal maps, in~\S\ref{s:preliminaries}.

In~\S\ref{s:resonant kneading} we introduce a class of diverging kneading maps we call ``resonant'', and then prove the Main Theorem assuming a result describing, for a unimodal map whose kneading map is resonant and satisfies an additional property, the space of invariant probability measures supported on its (minimal) \pcs{} (Theorem~\ref{t:invariant of resonant}).

To prove Theorem~\ref{t:invariant of resonant} we first recall in~\S\ref{ss:kneading odometer} the generalized odometer associated to a kneading map, that was introduced in~\cite{BruKelsPi97}.
We show that for a unimodal map whose kneading map~$Q$ is non-decreasing and diverging the space of invariant probability measures supported on the \pcs{} is affine homeomorphic to the space of invariant probability measures of the generalized odometer associated to~$Q$ (Theorem~\ref{t:measure isomorphism} in \S\ref{ss:measure isomorphism}).

In~\S\ref{s:Bratteli-Vershik system} we recall the definition of Bratteli-Vershik system associated to a kneading map, that was introduced by Bruin in~\cite{Bru03}.
We recall in particular the representation of the space of invariant probability measures of such a system, as the inverse limit of some explicit linear maps, called ``transition matrices'' (\S\ref{ss:invariant measures}).

The proof of Theorem~\ref{t:invariant of resonant} is given in~\S\ref{ss:proof of invariant of resonant}, after calculating the transition matrices of the corresponding Bratteli-Vershik system in~\S\ref{ss:transition matrices}.

In Appendix~\ref{a:ergodic theory} we give the proof of Corollary~\ref{c:ergodic theory interval}, using well\nobreakdash-known results in the literature.

\subsection{Acknowledgments}
Corollary~\ref{c:ergodic theory interval} and the result described in Remark~\ref{r:ergodic theory holomorphic} give a partial answer to a question posed by Feliks Przytycki in several discussions with JRL over the years.
We are also grateful with Henk Bruin and Neil Dobbs for useful comments concerning a first version of this paper, with Jan Kiwi for evoking Proposition~\ref{p:continuous unimodal}, and with Tomasz Downarowicz, Eli Glasner, Godofredo Iommi, Jan Kiwi and Bartlomiej Skorulski for useful discussions.
Finally, we thank the referee for useful comments.

MIC is grateful with the Department of Mathematics of the University of Washington, and JRL with the Institute of Mathematics of the Polish Academy of Sciences (IMPAN), where part of this work was done.

\section{Preliminaries}\label{s:preliminaries}
In this section we fix some notation (\S\ref{ss:linear algebra notation}), and then recall some definitions and results about unimodal maps (\S\S\ref{ss:unimodal}, \ref{ss:cutting and kneading}, \ref{ss:full families}).
See~\cite{BruBru04,dMevSt93} for background on unimodal maps.

Throughout this rest of the paper we denote by~$\NN$ the set of non\nobreakdash-negative integers.
\subsection{Linear algebra notation}\label{ss:linear algebra notation}
Given a non-empty finite set~$V$, for each $v \in V$ we denote by $\vec{e}_v \in \RR^V$ the vector having all of its coordinates equal to~$0$, except for the coordinate corresponding to~$v$ that is equal to~$1$.
Notice in particular that $\{ \vec{e}_v \mid v \in V \}$ is a base of~$\RR^V$.
Furthermore we will denote by~$\Delta_{V}$ the \textit{unit simplex} in $\RR^V$, which is defined as the (closed) convex hull of $\{ \vec{e}_v \mid v \in V \}$ in~$\RR^V$, and by~$\| \cdot \|_1$ the norm on~$\RR^V$ defined by $\| \sum_{v \in V}\alpha_v \vec{e}_v \|_1 = \sum_{v \in V} |\alpha_v|$.
Observe that~$\| \cdot \|_1$ is constant equal to~1 on~$\Delta_V$. 

Given non-empty finite sets $V, V'$ denote by $\M_{V, V'}$ the group of matrices whose entries are real and indexed by $V \times V'$.
For a matrix $A \in \M_{V, V'}$ we denote by $A^t$ the transpose of~$A$, and for $(v, v') \in V \times V'$ we denote by $A(v, v')$ the corresponding entry of~$A$, and by $A(\cdot, v')$ the corresponding column vector of~$A$.
Given column vectors $\{ \vec{x}_{v'} \mid v' \in V' \}$ in $\RR^V$ we denote by $(\vec{x}_{v'})_{v' \in V'}$ the matrix in $\M_{V, V'}$ whose column vector corresponding to the coordinate~$v'$ is equal to $\vec{x}_{v'}$.

We say that a matrix~$A$ is (left) \textit{stochastic} if all of its entries are non-negative and if the sum of all the entries in each column is equal to~1.
Observe that a stochastic matrix in $\M_{V, V'}$ maps~$\Delta_{V'}$ into~$\Delta_V$, and that the product of stochastic matrices is stochastic.
\begin{lemma}\label{l:non expanding}
Let $V, V'$ be non-empty finite sets and let $A \in \M(V, V')$ be a stochastic matrix.
Then for each $\vec{w}, \vec{w}' \in \Delta_V$ we have
$$ \| A(\vec{w}) - A(\vec{w}') \|_1 \le \| \vec{w} - \vec{w}' \|_1. $$
\end{lemma}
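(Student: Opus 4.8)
The plan is to reduce the inequality to the case where $\vec{w}$ and $\vec{w}'$ are vertices of the simplex, or more directly, to exploit the fact that a stochastic matrix sends each vertex $\vec{e}_{v'}$ to a vector in $\Delta_V$, together with the column-sum normalization. First I would write $\vec{w} - \vec{w}' = \sum_{v' \in V'} \beta_{v'} \vec{e}_{v'}$ with $\sum_{v'} \beta_{v'} = 0$, since both $\vec{w}$ and $\vec{w}'$ lie in $\Delta_V$ and hence have coordinate sum $1$; note this does not yet use non-negativity of the $\beta_{v'}$. Then by linearity $A(\vec{w}) - A(\vec{w}') = \sum_{v'} \beta_{v'} A(\cdot, v')$, and I want to estimate the $\|\cdot\|_1$ norm of the right-hand side.

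The cleanest route: for each coordinate $v \in V$,
$$
\left| \sum_{v' \in V'} \beta_{v'} A(v, v') \right|,
$$
and summing over $v$ gives $\sum_{v} \sum_{v'} |\beta_{v'}| A(v,v')$ after pulling the absolute value inside (triangle inequality), which equals $\sum_{v'} |\beta_{v'}| \sum_{v} A(v, v') = \sum_{v'} |\beta_{v'}|$ by the column-stochastic property. But $\sum_{v'} |\beta_{v'}|$ is not obviously $\|\vec{w} - \vec{w}'\|_1$ — rather, $\beta_{v'}$ here are the coefficients in the $\vec{e}_{v'}$ basis of a difference in $\RR^{V'}$... wait, the indices need care: $\vec{w}, \vec{w}' \in \Delta_V$, so in fact I should instead take $\vec{u}, \vec{u}' \in \Delta_{V'}$ as the arguments. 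I would first point out that the statement as phrased presumes $A$ acts on $\Delta_V$; re-reading, the matrix $A \in \M_{V,V'}$ maps $\Delta_{V'} \to \Delta_V$, so the hypothesis should read $\vec{w}, \vec{w}' \in \Delta_{V'}$, and I would prove it in that form (this is surely the intended statement). With $\vec{w} - \vec{w}' = \sum_{v' \in V'} \beta_{v'} \vec{e}_{v'}$, we have $\|\vec{w} - \vec{w}'\|_1 = \sum_{v'} |\beta_{v'}|$ by definition of the norm, and the computation above yields exactly
$$
\|A(\vec{w}) - A(\vec{w}')\|_1 = \sum_{v \in V} \left| \sum_{v' \in V'} \beta_{v'} A(v, v') \right| \le \sum_{v' \in V'} |\beta_{v'}| \sum_{v \in V} A(v, v') = \sum_{v' \in V'} |\beta_{v'}| = \|\vec{w} - \vec{w}'\|_1,
$$
where the middle inequality is the triangle inequality followed by interchanging the order of summation, and the final equalities use that each column of $A$ sums to $1$ and that $A$ has non-negative entries (so $\sum_v A(v,v') = 1$, not merely $\le 1$; non-negativity is what lets us drop absolute values around $A(v,v')$).

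There is no real obstacle here — the only subtlety is bookkeeping the index sets correctly ($V'$ for the domain simplex, $V$ for the codomain) and recognizing that non-negativity of entries is used to replace $\sum_v |A(v,v')|$ by $\sum_v A(v,v') = 1$. I would present the three-line display above essentially as the whole proof, prefaced by the decomposition of $\vec{w} - \vec{w}'$ in the basis $\{\vec{e}_{v'}\}$.
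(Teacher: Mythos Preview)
Your proof is correct and is essentially the same as the paper's: decompose $\vec{w}-\vec{w}'$ in the standard basis, apply the triangle inequality to $A(\vec{w})-A(\vec{w}') = \sum_{v'} (w_{v'}-w_{v'}')\,A(\vec{e}_{v'})$, and use that each column $A(\vec{e}_{v'})$ has $\|\cdot\|_1$-norm~$1$ by the stochastic property. Your observation that the hypothesis should read $\vec{w},\vec{w}' \in \Delta_{V'}$ rather than $\Delta_V$ is also correct; the paper's statement and proof carry the same index slip.
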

\begin{proof}
Putting $\vec{w} = ( w_v )_{v \in V}$ and $\vec{w}' = ( w_{v}' )_{v \in V'}$, we have
\begin{multline*}
\| A(\vec{w}) - A(\vec{w}') \|_1
\le
\sum_{v \in V} \| (w_v - w_v') A(\vec{e}_v) \|_1
=
\sum_{v \in V} |w_v - w_v'|
=
\| \vec{w} - \vec{w}' \|_1.
\end{multline*}
\end{proof}
\subsection{Unimodal maps}\label{ss:unimodal}
A continuous map $f:[0,1]\to [0,1]$ is \textit{unimodal} if~$f(0) = f(1) = 0$, and if there exists a point $c \in [0,1]$ such that~$f$ is strictly increasing on~$[0, c]$, and strictly decreasing on~$[c, 1]$.
The point~$c$ is called the \textit{turning} or \textit{critical point} of~$f$.

For each $\parameter \in (0, 4]$ the logistic map~$f_\parameter$ is a unimodal with critical point~$x = \tfrac{1}{2}$.
Other well\nobreakdash-known examples of unimodal maps are the \textit{symmetric tent maps}, which are defined for a parameter $s \in (0, 2]$,  by
$$ T_s(x) = \begin{cases}
s x & \text{if $x \in [0, \tfrac{1}{2}]$}, \\
s(1 - x) & \text{if $x \in [\tfrac{1}{2}, 1]$}.
\end{cases} $$

Let~$f$ be a unimodal map with critical point~$c$.
The $\omega$\nobreakdash-limit of~$c$ will be called the \textit{\pcs{}} of~$f$.
When either $f(c) \le c$ or $f^2(c) \ge c$, it is easy to see that the \pcs{} of~$f$ reduces to a single point.
We will thus (implicitly) assume from now on that for each unimodal map~$f$ that we consider we have $f^2(c) < c < f(c)$.

\subsection{Cutting times and the kneading map}\label{ss:cutting and kneading}
To describe the dynamics of a unimodal map~$f$ on its \pcs, we will make the following definitions.
Let~$c$ be the critical point of~$f$ and for each $n \ge 1$ put $c_n = f^n(c)$.
Define the sequence of compact intervals $(D_n)_{n \ge 1}$ inductively by $D_1 = [c, c_1]$, and for each $n \ge 2$, by
$$
D_{n} =
\begin{cases}
f(D_{n - 1}) & \text{if } c \not \in D_{n - 1}, \\
[c_{n}, c_1] & \text{otherwise}.
\end{cases}
$$
An integer $n \ge 1$ will be called a \textit{cutting time} if $c \in D_n$.
We will denote by $(S_k)_{k \ge 0}$ the sequence of all cutting times.
From our assumption that $f^2(c) < c < f(c)$ it follows that $S_0 = 1$ and $S_1 = 2$.

It can be shown that if~$S$ and~$S'> S$ are consecutive cutting times, then $S' - S$ is again a cutting time, and that this cutting time is less than or equal to~$S$ when~$f$ has no periodic attractors, see for example~\cite{Bru95,Hof80}.
That is, if~$f$ has no periodic attractors then for each $k \ge 1$ there is a non-negative integer~$Q(k)$, such that $Q(k) \le k - 1$, and
$$ S_k - S_{k - 1} = S_{Q(k)}. $$
Putting $Q(0) = 0$, the function $Q : \NN \to \NN$ so defined is called the \textit{kneading map} of~$f$.
It follows from the recursion formula above, and from $S_0 = 1$, that the sequence $(S_k)_{k \ge 0}$ of cutting times is determined by~$Q$.

We will say that a function $Q : \NN \to \NN$ is a \textit{kneading map} if there is a unimodal map~$f$ with critical point~$c$, such that $f^2(c) < c < f(c)$, such that~$f$ has no periodic attractors and such that the kneading map of~$f$ is equal to~$Q$.
If we denote by~~$\succeq$ the lexicographical ordering in~$\NN^\NN$, then a function $Q : \NN \to \NN$ is a kneading map if and only if $Q(0) = 0$, for each $k \ge 1$ we have $Q(k) \le k - 1$, and if for each $k \ge 1$ we have
\begin{equation}
\label{admisible}
\{Q(k+j)\}_{j\geq 1}
\succeq
\{Q(Q (Q(k)) + j)\}_{j\geq 1},
\end{equation}
see~\cite{Bru95,Hof80}.
Notice in particular that, if $Q : \NN \to \NN$ is non-decreasing, $Q(0) = 0$ and for each $k \ge 1$ we have $Q(k) \le k - 1$, then~$Q$ is a kneading map.
\subsection{Full families of unimodal maps}\label{ss:full families}
We will say that a family of unimodal maps $(g_t)_{t \in I}$ is \textit{full}, if for each kneading map~$Q$ there is a parameter $t \in I$ such that the kneading map of~$g_t$ is equal to~$Q$.\footnote{Full families are usually defined with ``kneading sequences'', as opposed kneading maps.
The definition adopted here gives a weaker condition, but it is enough for our purposes.}
The logistic family $(f_\parameter)_{\parameter \in (0, 4]}$ is well\nobreakdash-known to be full.
In~\cite[Theorem~4]{HofKel90} it is shown that a family of~$C^1$ unimodal maps $(g_t)_{t \in [0, 1]}$ satisfying the following properties is full: both~$g_t(x)$ and~$g_t'(x)$ vary continuously when $(t, x)$ varies on $[0, 1] \times [0, 1]$, for each $t \in [0, 1]$ (resp. $t \in (0, 1]$) the critical point~$c_t$ of~$g_t$ satisfies $g_t(c_t) > c_t$ (resp. $g_t^2(c_t) < c_t$), and we have~$g_0^2(c_0) = c_0$ and~$g_1(c_1) = 1$; see also~\cite[{\S}III.1]{ColEck80},~\cite[{\S}II.4]{dMevSt93}.

In the following proposition we gather several known results.
\begin{proposition}\label{p:continuous unimodal}
Let~$f$ be a unimodal map whose kneading map diverges.
Then the \pcs{} of~$f$ is a Cantor set, and the restriction of~$f$ to this set is minimal and has zero topological entropy.
Furthermore, if~$\widehat{f}$ is a unimodal map having the same kneading map as~$f$, then the space of invariant probability measures of~$\widehat{f}$ supported on the \pcs{} of~$\widehat{f}$ is affine homeomorphic to that of~$f$.
\end{proposition}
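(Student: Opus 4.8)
The plan is to assemble known results, chiefly from Bruin, Keller and St.~Pierre \cite{BruKelsPi97} and from Bruin \cite{Bru95,Bru03}, reducing the measure-theoretic and entropy parts to the generalized odometer associated with the kneading map $Q$ of $f$. The purely topological assertions I would dispose of by citation. Since $Q$ diverges, $f$ has no periodic attractor (this is built into the notion of kneading map) and there are infinitely many cutting times, so $\omega(c)$ is infinite; by \cite{Bru95} (see also \cite{BruKelsPi97}) the set $\omega(c)$ contains no interval and $f$ restricted to it is minimal. An infinite minimal system has no isolated points, so $\omega(c)$ is compact, metrizable, perfect and totally disconnected --- that is, a Cantor set. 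I would take minimality from the cited combinatorial analysis rather than try to deduce it from the odometer factor below, since an almost one-to-one extension of a minimal system need not itself be minimal.

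For zero topological entropy and for the last assertion, I would bring in the generalized odometer $(G_Q,\tau_Q)$ of \cite{BruKelsPi97}, a minimal Cantor system with zero topological entropy, together with the associated factor map $\pi\colon\omega(c)\to G_Q$: a continuous surjection with $\pi\circ f=\tau_Q\circ\pi$ that is injective outside a countable, forward-invariant set $E\subset\omega(c)$. The crucial soft step is that push-forward $\mu\mapsto\pi_*\mu$ is a continuous affine map from the space of $f$-invariant probability measures supported on $\omega(c)$ to the space of $\tau_Q$-invariant probability measures on $G_Q$, and that this map is a bijection: since $E$ is countable and invariant while $G_Q$, being minimal and infinite, has no periodic orbit, every invariant measure on $G_Q$ gives mass zero to the countable invariant set $\pi(E)$; one then checks, using a Borel section of $\pi$ over its injectivity set, that every invariant measure on $G_Q$ lifts uniquely to $\omega(c)$. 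By compactness this bijection is an affine homeomorphism. Zero entropy is then immediate: for every invariant $\mu$ we get $h_\mu(f)=h_{\pi_*\mu}(\tau_Q)=0$, because $(G_Q,\tau_Q)$ has zero topological entropy and so, by the variational principle, zero entropy for every invariant measure; a second application of the variational principle, now to $(\omega(c),f)$, gives $h_{\mathrm{top}}(f|_{\omega(c)})=\sup_\mu h_\mu(f)=0$. For the final statement, $G_Q$ depends only on $Q$; so if $\widehat{f}$ has the same kneading map, applying the same construction to $\widehat{f}$ yields an affine homeomorphism onto the same space of measures on $G_Q$, and composing the two gives the asserted affine homeomorphism between the measure spaces of $f$ and $\widehat{f}$.

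The step I expect to be the main obstacle is the structural input quoted from \cite{BruKelsPi97}: that the factor map onto the odometer is injective outside a \emph{countable} set, and not merely almost one-to-one in the topological sense. This strengthening is indispensable, because an almost one-to-one extension of a zero-entropy system can have positive entropy; it is precisely the negligibility of the exceptional set for \emph{all} invariant measures --- the property that makes $\pi_*$ a bijection --- that forces every measure-theoretic entropy, hence $h_{\mathrm{top}}(f|_{\omega(c)})$, to vanish and that makes the measure space depend only on $Q$. Granting this, the remaining ingredients (minimality and the Cantor property once cited, Borel sections, the variational principle, and compactness) are routine.
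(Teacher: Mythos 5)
There is a genuine gap, and it is exactly at the step you flag as the ``main obstacle''. The input you attribute to Bruin--Keller--St.~Pierre --- that the projection between the generalized odometer and the post\nobreakdash-critical system is injective outside a countable invariant set, so that push\nobreakdash-forward is a bijection on invariant measures --- is not available there for an arbitrary diverging kneading map. What \cite{BruKelsPi97} provides is only a continuous semiconjugacy $\pi:\Omega_Q\to\omega(c)$ (note also that the odometer is the \emph{extension} and $\omega(c)$ the factor, the reverse of the direction you wrote). Injectivity of $\pi$ off the grand orbit of the critical point is precisely the hard content of Proposition~\ref{p:almost isomorphism} and Theorem~\ref{t:measure isomorphism} of this paper, and it is established only under the additional hypothesis that $Q$ is non\nobreakdash-decreasing, via Bruin's Lemma~4 of \cite{Bru99} under condition~\eqref{11}, together with a separate unique\nobreakdash-ergodicity argument (Lemma~\ref{l:unique ergodicity}) when that condition fails. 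Proposition~\ref{p:continuous unimodal}, however, is stated and used for \emph{every} diverging kneading map, so your route does not cover the required generality; if the property you assume were known for all diverging $Q$, most of \S\ref{s:measure isomorphism} would be superfluous.

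A second problem is that you quote minimality and the Cantor structure of $\omega(c)$ from \cite{Bru95,Bru98,Bru03} directly for a general unimodal map, but here ``unimodal'' means merely continuous, so $f$ may have wandering intervals and those references (logistic or \Sunimodal{} setting) do not apply verbatim. The paper's proof sidesteps both issues with a softer device and no odometer input: by fullness choose a logistic $f_\parameter$ with the same kneading map, take the Milnor--Thurston monotone semiconjugacy $h$ with $h\circ f=f_\parameter\circ h$, and observe that the nontrivial fibers of $h$ over $X_{f_\parameter}$ are countably many intervals whose forward iterates are pairwise disjoint, hence shrink and carry no invariant mass. This transfers the Cantor set property, minimality and zero entropy from $f_\parameter$ to $f$, and simultaneously shows that every invariant measure on $X_f$ lives where $h$ is injective, which gives the affine homeomorphism of measure spaces between any two unimodal maps sharing the kneading map. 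Your entropy argument would be fine in the correct factor direction for the logistic representative, but for a general $f$ you still need this semiconjugacy step, so the proposal as written does not go through.
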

\begin{proof}
As the logistic family is full there is a parameter $\parameter \in (0, 4]$ such that the kneading map of~$f_\parameter$ is equal to that of~$f$.
The first part of the lemma is shown for~$f_\parameter$, for example in~\cite[Proposition~3.1]{Bru98} and~\cite[Proposition~1]{Bru03},~\cite[\S11]{BloLyu91}.
To show it holds for a general unimodal map~$f$, denote by $X_f$ and~$X_{f_\parameter}$ the \pcs{} of~$f$ and~$f_\parameter$, respectively.

Consider a non-decreasing and continuous map $h : [0, 1] \to [0, 1]$ mapping the critical point of~$f$ to that of~$f_\parameter$, and such that $h \circ f = f_\parameter \circ h$.
Such a map is given by Milnor-Thurston's theory, see for example~\cite[\S{}III.4]{dMevSt93}.
For each $x \in [0, 1]$ the set $h^{-1}(x)$ is either reduced to a point, or it is a closed interval.
Denote by~$X_0$ the subset of~$X_{f_\parameter}$ of those~$x$ such that~$h^{-1}(x)$ is reduced to a point.
Observe that the set $X_{f_\parameter} \setminus X_0$ is countable and that~$h$ is injective on~$h^{-1}(X_0)$.
As~$f_\parameter$ is minimal on~$X_{f_\parameter}$ and this set is a Cantor set, $f_\parameter$ has no periodic points on~$X_{f_\parameter}$.
Therefore for each~$x \in X_{f_\parameter} \setminus X_0$ the intervals $(f^n(h^{-1}(x)))_{n \ge 0}$ are pairwise disjoint.
In particular the length of $f^n(h^{-1}(x))$ converges to~$0$ as $n \to + \infty$.
It follows that~$X_f$ is equal to boundary of~$h^{-1}(X_{f_\parameter})$ and~$f$ is minimal on~$X_f$.
Since~$X_{f_\parameter}$ is a Cantor set, the set~$X_f$ is also a Cantor set.
To prove that the topological entropy of~$f|_{X_f}$ is zero, by the variational principle we just need to show that the measure theoretic entropy of each invariant measure of~$f$ that is supported on~$X_f$ is equal to zero.
To do this we use again the fact that for each $x \in X_{f_\parameter} \setminus X_0$ the intervals $(f^n(h^{-1}(x)))_{n \ge 0}$ are pairwise disjoint.
This implies that~$h^{-1}(x)$ has measure zero for each invariant measure of~$f$.
Thus each invariant measure of~$f$ supported on~$h^{-1}(X_f)$ is in fact supported on~$h^{-1}(X_0)$.
So the measure theoretic entropy of an invariant measure~$\mu$ of~$f$ that is supported on~$X_f$, is equal to the measure theoretic entropy of the invariant measure~$h_*(\mu)$ of~$f_\parameter$.
As the topological entropy of~$f_\parameter|_{X_{f_\parameter}}$ is equal to zero, the variational principle implies that the measure theoretic entropy of~$h_*(\mu)$, and hence that of~$\mu$, are both zero.

To prove the last statement of the lemma, it is enough to consider the case $\widehat{f} = f_\parameter$.
The assertion follows from the fact that each invariant measure of~$f$ that is supported on~$X_f$, is in fact supported on~$h^{-1}(X_0)$.
\end{proof}

\section{Resonant kneading maps}\label{s:resonant kneading}
In this section we introduce a class of diverging kneading maps we call ``resonant'', and then reduce the proof of the Main Theorem to a result describing, for a unimodal map whose kneading map is resonant and satisfies an additional property, the space of invariant probability measures supported on the \pcs{}.
This result is stated as Theorem~\ref{t:invariant of resonant}, below, and its proof occupies \S\S\ref{s:measure isomorphism}, \ref{s:Bratteli-Vershik system}, \ref{s:invariant of resonant}.

We will say that a function $Q : \NN \to \NN$ is \textit{resonant}, if it is non-decreasing, diverging, and if every integer $k \in \NN$ satisfying $Q(k + 1) > Q(k)$ belongs to the image of~$Q$.
Observe that if~$Q$ is a resonant function such that for each~$k \ge 1$ we have $Q(k) \le k - 1$, then~$Q$ is a kneading map, see~\S\ref{ss:cutting and kneading}.

For each resonant kneading map~$Q$ satisfying $Q(0) = 0$ there are strictly increasing sequences $(q_r)_{r \ge 0}$ and $(b(r))_{r \ge 0}$ of integers such that $q_0 = b(0) = 0$, $Q(\NN) = \{ q_r \mid r \ge 0 \}$, $Q^{-1}(0) = \{ 0, \ldots, q_{b(1)} \}$, and such that for every $r \ge 1$ we have
$$ Q^{-1}(q_r) = \{ q_{b(r)} + 1, \ldots, q_{b(r + 1)} \}. $$
Conversely, each pair of strictly increasing sequences of integers $(q_r)_{r \ge 0}$ and $(b(r))_{r \ge 0}$ satisfying~$q_0 = b(0) = 0$, define in this way a resonant kneading map~$Q: \NN \to \NN$.

\begin{theoalph}\label{t:invariant of resonant}
Let $(q_r)_{r \geq 0}$ and $(b(r))_{r \geq 0}$ be the strictly increasing sequences of positive integers such that $q_0 = b(0) = 0$, and let~$Q$ be the resonant kneading map defined by~$Q^{-1}(0) = \{ 0, \ldots, q_{b(1)} \}$, and for $r \ge 1$,  by
$$ Q^{-1}(q_r) = \{ q_{b(r)} + 1, \ldots, q_{b(r + 1)} \}. $$
Define $(S_k)_{k \ge 0}$ recursively by $S_0 = 1$ and $S_k = S_{k - 1} + S_{Q(k)}$, and assume that,
\begin{equation}
\label{e:positive determinant}
\prod_{r \geq 0} \left(1 - \frac{S_{q_r}}{S_{q_{r + 1}}} \right)>0.
\end{equation}
Furthermore, for each $r \ge 0$ put $I_r = \{ 0, \ldots, b(r) - r \}$ and let $\Xi_r : \RR^{I_{r + 1}} \to \RR^{I_r}$ be the stochastic linear map defined by
$$
\Xi_{r}(x_0,\ldots, x_{b(r + 1)- (r + 1)})
=
\left( \left(\sum_{j=b(r) - r}^{b(r + 1)- (r + 1)}x_j \right), x_0,\ldots,x_{b(r) - r - 1} \right).
$$
Then for each unimodal map~$f$ whose kneading map is equal to~$Q$, the \pcs{} of~$f$ is a Cantor set, $f$ is minimal on this set, and the space of invariant probability measures of~$f$ supported on this set is affine homeomorphic to $\varprojlim_{r} (\Delta_{I_r}, \Xi_r)$.
\end{theoalph}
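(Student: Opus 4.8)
The plan is to follow the roadmap indicated in \S\ref{s:resonant kneading}: pass from the action of $f$ on its \pcs{} to a purely combinatorial model, then to a Bratteli-Vershik system, and finally to an inverse limit of simplices that can be identified with $\varprojlim_r(\Delta_{I_r}, \Xi_r)$. As a first step I would dispose of the topological assertions and reduce to combinatorics. Since a resonant function is by definition diverging, $Q$ is a diverging kneading map, so Proposition~\ref{p:continuous unimodal} applies: the \pcs{} of $f$ is a Cantor set, $f$ is minimal on it, and the affine-homeomorphism type of the space of invariant probability measures supported on the \pcs{} depends only on $Q$. This gives the first two assertions and lets me replace $f$ by any convenient unimodal map with kneading map $Q$. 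Then Theorem~\ref{t:measure isomorphism} (applicable because $Q$ is non-decreasing and diverging) identifies this space, affine-homeomorphically, with the space of invariant probability measures of the generalized odometer associated to $Q$, so it remains to compute the latter.

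Second, I would represent the generalized odometer by the Bratteli-Vershik system associated to $Q$ recalled in \S\ref{s:Bratteli-Vershik system}, and invoke the general description of its invariant measures from \S\ref{ss:invariant measures}: the space of invariant probability measures is affine homeomorphic to $\varprojlim_r(\Delta_{V_r}, A_r)$, where $V_r$ is the vertex set at level $r$ of the diagram and $A_r \in \M_{V_r, V_{r+1}}$ is the associated (stochastic) transition matrix. The substance is then to read off the $V_r$ and $A_r$ from the resonant structure of $Q$. Using the defining relations $Q(\NN) = \{q_r\}_{r \ge 0}$ and $Q^{-1}(q_r) = \{q_{b(r)} + 1, \dots, q_{b(r+1)}\}$ together with the recursion $S_k = S_{k-1} + S_{Q(k)}$, a direct bookkeeping — telescoping the diagram along the levels where its incidence pattern actually changes — should show that $V_r$ can be indexed by $I_r = \{0, \dots, b(r) - r\}$, that $A_r$ shifts the first $b(r)-r$ basis vectors (the companion-type pattern of $\Xi_r$), and that its remaining columns are convex combinations whose weights are built from ratios of cutting times of the form $S_{q_r}/S_{q_{r+1}}$.

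Third — and here is where hypothesis~\eqref{e:positive determinant} enters — I would show that $\varprojlim_r(\Delta_{V_r}, A_r)$ is affine homeomorphic to $\varprojlim_r(\Delta_{I_r}, \Xi_r)$. Note that $\Xi_r$ depends only on the sequence $(b(r))$, whereas the raw transition matrices $A_r$ also carry the data of the cutting times; the point of the positive-determinant condition is precisely to force that extra data to wash out in the inverse limit. Concretely, I expect to build an intertwining between the two towers that commutes only up to telescoping, and to check that the resulting ``errors'' are governed by the partial products $\prod_{r \le r_0}\left(1 - S_{q_r}/S_{q_{r+1}}\right)$; positivity of the full product in~\eqref{e:positive determinant} is then exactly what guarantees that the two inverse limits agree, rather than one mapping onto a proper face of the other. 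Composing the affine homeomorphisms produced in the three stages yields the theorem.

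The main obstacle is the combinatorial computation of the transition matrices together with this last identification: one must extract from the resonant combinatorics the precise form of the $A_r$, recognise the pattern of $\Xi_r$ inside them, and verify that the discrepancy is controlled by the convergent product in~\eqref{e:positive determinant}. By contrast, the first two stages are essentially applications of results already in hand (Proposition~\ref{p:continuous unimodal}, Theorem~\ref{t:measure isomorphism}, and the general Bratteli-Vershik machinery of \S\ref{s:Bratteli-Vershik system}), so the analytic heart of the argument lies in the passage from $(\Delta_{V_r}, A_r)$ to $(\Delta_{I_r}, \Xi_r)$.
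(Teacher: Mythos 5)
Your proposal is correct and follows essentially the same route as the paper: Proposition~\ref{p:continuous unimodal} and Theorem~\ref{t:measure isomorphism} reduce to the generalized odometer, Theorem~\ref{t:reduction to Bratteli} and Lemma~\ref{l:medidas Bratteli} replace it by the inverse limit $\varprojlim_{j}(\Delta_{V_j}, M_j)$ of the Bratteli-Vershik transition matrices, and hypothesis~\eqref{e:positive determinant} is used exactly as you predict, to trade the cutting-time-dependent matrices for the purely combinatorial $\Xi_r$ via a summable-error argument (Lemmas~\ref{l:estimates} and~\ref{l:model}). The one imprecision is that telescoping alone does not make the vertex sets equal to $I_r$ (they remain of size $q_{b(r)}-q_r+1$); the paper instead projects $\RR^{V_{q_r+1}}$ onto $\RR^{I_r}$ by lumping the repeated columns of the telescoped products and uses the rank computation of Lemma~\ref{l:multiplicacion} to see that this projection is injective on the inverse limit.
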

We will now prove the Main Theorem assuming Theorem~\ref{t:invariant of resonant}.
We will use the following lemma, whose proof is at the end of this section.
Note that~$(I_r)_{r \ge 0}$ and~$(\Xi_r)_{r \ge 0}$, defined in the statement of Theorem~\ref{t:invariant of resonant}, only depend on $(b(r))_{r \ge 0}$.
\begin{lemma}\label{l:b}
Let~$\sE$ be a non-empty, compact, metrizable and totally disconnected topological space.
Then there is a strictly increasing sequence of integers $(b(r))_{r \ge 0}$ such that $b(0) = 0$, and such that the set of extreme points of $\varprojlim_{r} (\Delta_{I_r}, \Xi_r)$ is homeomorphic to~$\sE$.
\end{lemma}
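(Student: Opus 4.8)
The plan is to recognize $\varprojlim_r (\Delta_{I_r}, \Xi_r)$ as the space of Borel probability measures on an explicit compact, metrizable and totally disconnected space, and then to choose $(b(r))_{r \ge 0}$ so that this space is homeomorphic to~$\sE$. Put $N_r \= b(r) - r$, so that $I_r = \{ 0, \ldots, N_r \}$ and $(N_r)_{r \ge 0}$ is non-decreasing. The formula defining~$\Xi_r$ shows that $\Xi_r(\vec{e}_{v'}) = \vec{e}_{\phi_r(v')}$, where $\phi_r : I_{r + 1} \to I_r$ is the surjection given by $\phi_r(v') = v' + 1$ for $0 \le v' \le N_r - 1$ and $\phi_r(v') = 0$ for $N_r \le v' \le N_{r + 1}$. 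Identifying $\Delta_{I_r}$ with the space $\P(I_r)$ of probability measures on~$I_r$, the map~$\Xi_r$ becomes the push-forward~$(\phi_r)_*$. A compatible sequence of probability measures on the finite sets~$I_r$ extends, by Carath\'eodory's extension theorem, to a unique Borel probability measure on $Y \= \varprojlim_r (I_r, \phi_r)$ (finite additivity on the algebra of cylinders of~$Y$ is automatically countably additive, since every clopen subset of~$Y$ is a \emph{finite} union of cylinders), so $\varprojlim_r (\Delta_{I_r}, \Xi_r)$ is affinely homeomorphic to~$\P(Y)$. As~$Y$ is compact, the extreme points of~$\P(Y)$ are precisely the Dirac measures, and $y \mapsto \delta_y$ is a homeomorphism of~$Y$ onto this set; thus it suffices to choose $(b(r))_{r \ge 0}$ so that~$Y$ is homeomorphic to~$\sE$.

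\emph{$Y$ as the branch space of a tree.} For $v \ge 1$ the fiber $\phi_r^{-1}(v)$ is the singleton $\{ v - 1 \}$, so in~$Y$ the cylinder $[ v_r = v ]$ equals $[ v_{r + 1} = v - 1 ] = \cdots = [ v_{r + v} = 0 ]$. Writing $A_k$ for the clopen set $[ v_k = 0 ]$, and using $\phi_r^{-1}(0) = \{ N_r, \ldots, N_{r + 1} \}$, one obtains $Y = A_0$ together with the clopen partitions $A_r = \bigsqcup_{k = b(r) + 1}^{b(r + 1)} A_k$ for every $r \ge 0$. Every~$A_k$ is non-empty (any finite path in the inverse system extends to a point of~$Y$), two distinct points of~$Y$ lie in distinct pieces of these partitions at some finite stage, and the intersection along any decreasing branch of the partitions is a single point; hence~$Y$ is homeomorphic to the space of infinite branches of the rooted tree~$T$ with vertex set~$\NN$, root~$0$, in which the set of children of a vertex~$k$ is $\{ b(k) + 1, \ldots, b(k + 1) \}$. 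Equivalently, $T$ is the breadth-first enumeration of the rooted, locally finite tree without leaves whose $k$-th vertex has exactly $b(k + 1) - b(k)$ children.

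\emph{Realizing~$\sE$.} It remains to invoke the standard fact that every non-empty, compact, metrizable and totally disconnected space is the space of branches of a rooted, locally finite tree without leaves: fix finite clopen partitions $\U_0 = \{ \sE \}, \U_1, \U_2, \ldots$ of~$\sE$, each refining the previous one and with mesh tending to~$0$, and take the associated partition tree, in which the children of $U \in \U_n$ are the elements of~$\U_{n + 1}$ contained in~$U$. This tree is locally finite and has no leaves; enumerate it in breadth-first order, let $c_k \ge 1$ be the number of children of its $k$-th vertex, and set $b(0) \= 0$ and $b(r) \= c_0 + \cdots + c_{r - 1}$ for $r \ge 1$. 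This sequence is strictly increasing, and by the previous paragraph the space~$Y$ it produces is homeomorphic to the branch space of the partition tree, hence to~$\sE$. Combined with the first paragraph, this proves the lemma.

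The main obstacle is the bookkeeping in the second step: one must carefully unwind the two-case definition of~$\phi_r$ to see that all cylinders of~$Y$ collapse onto the clopen sets~$A_k$, and that the only relations among them are the partitions $A_r = \bigsqcup_{k = b(r) + 1}^{b(r + 1)} A_k$, so that~$Y$ really is a tree boundary with the prescribed branching numbers. The identification with~$\P(Y)$ in the first step and the realization of~$\sE$ in the last step are then routine.
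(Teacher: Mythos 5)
Your proposal is correct, and it rests on the same combinatorial core as the paper's proof --- encoding a refining sequence of clopen partitions of~$\sE$ into the sequence $(b(r))_{r \ge 0}$, exploiting that each $\Xi_r$ maps vertices of the simplex to vertices (your $\phi_r$ is exactly the map $\xi_r$ appearing in the paper's proof) --- but the route is genuinely different in its machinery. The paper introduces an auxiliary inverse system: it fixes nested clopen partitions $(\sP_j)_{j \ge 1}$ of~$\sE$, defines stochastic maps $L_j(\vec{e}_P) = \vec{e}_{\ell_j(P)}$, notes that the extreme points of $\varprojlim_j (\Delta_{\sP_j}, L_j)$ form $\varprojlim_j (\sP_j, \ell_j) \cong \sE$, and then constructs $(b(r))_{r \ge 0}$ together with inductively defined bijections $\gamma_j : \sP_j \to I_{r(j)}$ satisfying $L_j \circ \Gamma_j = (\Xi_{r(j)} \circ \cdots \circ \Xi_{r(j+1)-1}) \circ \Gamma_{j+1}$, so that the two inverse limits of simplices are affinely homeomorphic by telescoping; no measures on any space appear. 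You instead stay with the given system and pass to points: you identify $\varprojlim_r (\Delta_{I_r}, \Xi_r)$ with the Borel probability measures on $Y = \varprojlim_r (I_r, \phi_r)$, its extreme points with the Dirac masses and hence with~$Y$, then recognize~$Y$ as the branch space of the rooted tree in which vertex~$k$ has children $\{ b(k)+1, \ldots, b(k+1) \}$, and finally choose $b$ as the partial sums of the children counts of a partition tree of~$\sE$ in breadth-first order. What your approach buys is a conceptually transparent identification of the extreme points and a very explicit description of the admissible sequences~$b$; what it costs is the measure-extension step and the cylinder bookkeeping showing that $Y$ is a tree boundary (the relations $[v_r = v] = A_{r+v}$ and $A_r = \bigsqcup_{k=b(r)+1}^{b(r+1)} A_k$, which do check out, since $\phi_r^{-1}(v) = \{v-1\}$ for $v \ge 1$ forces every branch of the sets $A_k$ to determine all coordinates), whereas the paper's argument stays finite-dimensional throughout at the cost of the inductive construction of the~$\gamma_j$. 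Note also that your $b$ is strictly increasing, with $b(r) \ge r$, exactly because the partition tree has no leaves, which your choice of non-empty partition elements guarantees; so the output sequence is admissible for Theorem~\ref{t:invariant of resonant} just as the paper's is.
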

Let~$\sE$ be a non-empty, compact, metrizable and totally disconnected topological space and let $(b(r))_{r \ge 0}$ be given by Lemma~\ref{l:b}.
A direct computation using the Lemma~\ref{l:q}, below, shows that the sequences $(q_r)_{r \ge 0} \= \left( 2^{3^{r}} - 2 \right)_{r \ge 0}$ and~$(b(r))_{r \ge 0}$ satisfy~\eqref{e:positive determinant}, so the Main Theorem follows from Theorem~\ref{t:invariant of resonant} and the fact that the logistic family is full.
\begin{remark}\label{r:tent family}
We will now show that the Main Theorem holds when one replaces the logistic family by the family of symmetric tent maps $(T_s)_{s \in (0, 2]}$, defined in~\S\ref{ss:unimodal}.
When~$\sE$ is reduced to a single point consider the ``Fibonacci parameter'' $\parameter \in (0, 4]$, characterized by the property that the kneading map~$Q$ of the corresponding logistic map is given by $Q(k) = \max \{ 0, k - 2 \}$.
It is well\nobreakdash-known that~$f_\parameter$ is not renormalizable, that the \pcs{} of~$f_\parameter$ is a Cantor set, and that the restriction of~$f_\parameter$ to this set is minimal and uniquely ergodic, see for example~\cite[Corollary~1]{Bru03}.
It is also well\nobreakdash-known that there is a parameter~$s \in (0, 2]$ such that $f_\parameter$ is topologically conjugated to the tent map~$T_s$, see for example~\cite[{\S}III.4]{dMevSt93}.
So, when~$\sE$ is reduced to a single point, the parameter~$s$ satisfies the desired properties.

Suppose now that~$\sE$ contains at least~2 points, and let~$(b(r))_{r \ge 0}$ be the sequence given by Lemma~\ref{l:b}.
As~$\sE$ contains at least 2 points there is~$r_0 \ge 1$ such that for all $r \ge r_0$ we have $b(r) \ge r + 1$.
Modifying the values of~$b$ for $r = 1, \ldots, r_0 - 1$, if necessary, we assume that~$r_0 = 1$.
Let~$\parameter \in (0, 4]$ be a parameter such that the kneading map~$Q$ of the logistic map~$f_\parameter$ is the resonant function defined by~$(q_r)_{r \ge 0} \= (2^{3^r} - 2)_{r \ge 0}$ and~$(b(r))_{r \ge 0}$.
By the argument given above this remark, the parameter~$\parameter$ satisfies the conclusions of the Main Theorem.
As for each $r \ge 1$ we have $b(r) \ge r + 1$, it is easy to see that for each $k \ge 2$ we have $Q(k) \le k - 2$.
It thus follows that $f_\parameter$ is not renormalizable~\cite[Lemma~2.3]{Bru98} and that there is a parameter~$s \in (0, 2]$ such that the tent map~$T_s$ is topologically conjugated to~$f_\parameter$.
Hence, in this case the parameter~$s$ satisfies the desired properties.
\end{remark}
\begin{lemma}\label{l:q}
Let $(q_r)_{r \ge 0}$ be a strictly increasing sequence of integers such that $q_0 =0$ and such that for each sufficiently large $r \ge 1$ we have
\begin{equation}\label{e:norma}
q_{r + 1}
\ge
q_{r} + r^2 \prod_{s = 0}^{r - 1} (1 + q_{s + 1} - q_{s}).
\end{equation}
Given a strictly increasing sequence of integers $(b(r))_{r \ge 0}$ such that $b(0) = 0$, let~$Q$ be the resonant function defined by $(q_r)_{r \ge 0}$ and $(b(r))_{r \ge 0}$.
If we define the sequence $(S_k)_{k \ge 0}$ recursively by $S_0 = 1$ and $S_k = S_{k - 1} + S_{Q(k)}$, then we have
$$
\prod_{r \geq 0}
\left(1-\frac{S_{q_r}}{S_{q_{r + 1}}}\right) > 0.
$$
\end{lemma}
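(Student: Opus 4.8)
\emph{Proof proposal.}
The plan is to deduce from~\eqref{e:norma} that the series $\sum_{r \ge 0} S_{q_r}/S_{q_{r + 1}}$ converges, which is exactly what is needed: an immediate induction (using that $Q(k) < k$, so that the recursion is well defined) shows $S_k \ge 1$ for all $k$, hence $S_k = S_{k - 1} + S_{Q(k)} \ge S_{k - 1} + 1$, so $(S_k)_{k \ge 0}$ is strictly increasing and every factor $1 - S_{q_r}/S_{q_{r + 1}}$ lies in $(0, 1)$; and for real numbers $a_r \in (0, 1)$ the product $\prod_{r}(1 - a_r)$ is positive if and only if $\sum_r a_r < \infty$.

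To control the ratios I will bound $S_{q_r}$ from above and below. For the upper bound the key point is that $Q(k) \le q_r$ whenever $q_r < k \le q_{r + 1}$. Indeed, $Q$ is non-decreasing, and its blocks are $Q^{-1}(0) = \{ 0, \dots, q_{b(1)} \}$ and $Q^{-1}(q_j) = \{ q_{b(j)} + 1, \dots, q_{b(j + 1)} \}$ for $j \ge 1$; since $q_{r + 1}$ belongs to one such block, either it lies in $Q^{-1}(0)$, in which case $Q(q_{r + 1}) = 0 \le q_r$, or it lies in some $Q^{-1}(q_j)$ with $j \ge 1$, in which case $q_{b(j)} < q_{r + 1}$ forces $b(j) \le r$, and then $j \le b(j) \le r$ (because $(b(r))_{r \ge 0}$ is strictly increasing from $b(0) = 0$), so $Q(q_{r + 1}) = q_j \le q_r$. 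By monotonicity $Q(k) \le Q(q_{r + 1}) \le q_r$ for all $k \le q_{r + 1}$. Summing $S_k = S_{k - 1} + S_{Q(k)} \le S_{k - 1} + S_{q_r}$ over $k = q_r + 1, \dots, q_{r + 1}$ gives $S_{q_{r + 1}} \le (1 + q_{r + 1} - q_r) S_{q_r}$, and since $S_{q_0} = S_0 = 1$ an induction yields
$$ S_{q_r} \le \prod_{s = 0}^{r - 1} (1 + q_{s + 1} - q_s). $$
For the lower bound, summing the trivial inequality $S_k \ge S_{k - 1} + 1$ over the same range gives $S_{q_{r + 1}} \ge S_{q_r} + (q_{r + 1} - q_r)$.

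It then remains to combine these estimates with~\eqref{e:norma}: for all sufficiently large $r$,
$$ q_{r + 1} - q_r \ge r^2 \prod_{s = 0}^{r - 1} (1 + q_{s + 1} - q_s) \ge r^2 S_{q_r}, $$
so the lower bound improves to $S_{q_{r + 1}} \ge (1 + r^2) S_{q_r}$, that is $S_{q_r}/S_{q_{r + 1}} \le (1 + r^2)^{-1}$. As the remaining finitely many terms are each strictly less than $1$, we get $\sum_{r \ge 0} S_{q_r}/S_{q_{r + 1}} < \infty$, whence $\prod_{r \ge 0} \bigl( 1 - S_{q_r}/S_{q_{r + 1}} \bigr) > 0$. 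The only step that requires genuine care is the block bookkeeping behind $Q(k) \le q_r$ on $q_r < k \le q_{r + 1}$; everything else is a one-line manipulation of the recursion defining $(S_k)_{k \ge 0}$.
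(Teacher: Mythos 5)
Your proof is correct and follows essentially the same route as the paper's: bound $S_{q_r}$ above by $\prod_{s=0}^{r-1}(1+q_{s+1}-q_s)$ using the block structure of the resonant map, then combine with~\eqref{e:norma} to get $S_{q_r}/S_{q_{r+1}} \le (1+r^2)^{-1}$ for large $r$, so the series converges and the product is positive. The only cosmetic difference is that the paper exploits the exact identity $S_{q_{r+1}} = S_{q_r} + (q_{r+1}-q_r)S_{Q(q_{r+1})}$ (since $Q$ is constant on $\{q_r+1,\ldots,q_{r+1}\}$) where you use the one-sided bounds $Q(k)\le q_r$ and $S_{Q(k)}\ge 1$, which changes nothing of substance.
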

\begin{proof}
For each $r \ge 0$ and $k \in \{ q_{r} + 1, \ldots, q_{r + 1} \}$ we
have $Q(k)=Q(q_{r + 1})$.
Using the recursion formula $S_k = S_{k - 1} + S_{Q(k)}$ inductively, we obtain
\begin{equation}
\label{obs0}
S_{q_{r + 1}}=S_{q_{r}}+(q_{r + 1} - q_{r})S_{Q(q_{r + 1})},
\end{equation}
so
\begin{equation*}
\frac{S_{q_{r}}}{S_{q_{r + 1}}}
=
\left(1 + (q_{r + 1} - q_{r})\frac{S_{Q(q_{r + 1})}}{S_{q_r}} \right)^{-1}.
\end{equation*}
It is thus enough to show that for each~$r \ge 0$ for which~\eqref{e:norma} is satisfied we have
$(q_{r + 1} - q_r)\frac{S_{Q(q_{r + 1})}}{S_{q_r}} \ge r^2$, since this implies that $\sum_{r \ge 1} \frac{S_{q_r}}{S_{q_{r + 1}}} < + \infty$ and therefore that $\prod_{r \geq 0} \left(1-\frac{S_{q_r}}{S_{q_{r + 1}}}\right) > 0.$

From \eqref{obs0}, with~$r$ replaced by~$r - 1$, and from the inequality $Q(q_r)=Q(q_{r - 1} + 1)\leq q_{r - 1}$, we obtain
$$
S_{q_r}\leq
S_{q_{r - 1}}(1+ q_r - q_{r - 1}).
$$
So by induction in~$r$ we get,
\begin{equation*}
S_{q_r}\leq \prod_{s=0}^{r-1}(1+q_{s + 1} - q_s).
\end{equation*}
Hence by~\eqref{e:norma} we have $q_{r + 1} - q_r \geq r^2 S_{q_r} \geq r^2 \frac{S_{q_r}}{S_{Q(q_{r + 1})}},$ which gives the desired inequality.
\end{proof}
\begin{proof}[Proof of Lemma~\ref{l:b}]
Let $( \sP_j )_{j \ge 1}$ be a sequence of partitions of~$\mathscr{E}$ into clopen sets that generate the topology of~$\mathscr{E}$, in such a way that for each $j \ge 1$, the partition~$\sP_{j + 1}$ is finer than~$\sP_j$.
For each $j \ge 1$ and $P \in \sP_{j + 1}$ denote by~$\ell_j(P)$ the element of~$\sP_j$ containing~$P$.
Note that the map $\ell_j : \sP_{j + 1} \to \sP_j$ so defined is onto, and that the inverse limit $\varprojlim_{j} (\sP_j, \ell_j)$ is homeomorphic to~$\sE$.
Denote by $L_j : \RR^{\sP_{j + 1}} \to \RR^{\sP_j}$ the stochastic linear map such that for each $P \in \sP_{j + 1}$ we have $L_j (\vec{e}_P) = \vec{e}_{\ell_j(P)}$.
It is straightforward to check that the set of extreme points of $\varprojlim_{j} (\Delta_{\sP_j}, L_j)$ is equal to
$$ \varprojlim_{j} \left( \{ \vec{e}_P \mid P \in \sP_j \}, L_j|_{ \{ \vec{e}_P \mid P \in \sP_{j + 1} \} } \right),$$
which is clearly homeomorphic to $\varprojlim_{j} (\sP_j, \ell_j)$, and in turn to~$\sE$.
So we just need to find a sequence $(b(r))_{r \ge 0}$ in such a way that $\varprojlim_{j} (\Delta_{I_r}, \Xi_r)$ is affine homeomorphic to~$\varprojlim_{j} (\Delta_{\sP_j}, L_j)$.

Put $r(1) = 1$ and for $j \ge 2$ put $r(j) = 1 + \sum_{i = 1}^{j - 1} \# \sP_i$.
Then we put $b(0) = 0$, for $j \ge 1$ we put $b(r(j)) = r(j + 1) - 1$ and, as in the statement of Theorem~\ref{t:invariant of resonant}, we put
$$ I_{r(j)} = \{0, \ldots, b(r(j)) - r(j) \}
=
\{ 0, \ldots, \# \sP_j - 1 \}. $$
To define $b(r)$ for $r \neq r(j)$, we will define for each $j \ge 1$ a bijection $\gamma_j : \sP_j \to I_{r(j)}$ inductively as follows.
Let~$\gamma_1$ be an arbitrary bijection between~$\sP_1$ and $I_{r(1)}$.
Let $j \ge 1$ be given and assume by that the bijection~$\gamma_j$ is already defined.
Then we let $\gamma_{j + 1} : \sP_{j + 1} \to I_{r(j + 1)}$ be any bijection such that for each $i \in I_{r(j)}$ it maps $(\ell_j \circ \gamma_j)^{-1}(i)$ onto
$$ \{k \in \NN \mid \#(\ell_j \circ \gamma_j)^{-1}( \{0, \ldots, i - 1 \}) \le k \le \#(\ell_j \circ \gamma_j)^{-1}( \{0, \ldots, i \}) - 1 \}. $$
We complete the definition of the sequence $(b(r))_{r \ge 0}$, by putting for each $j \ge 1$ and $i \in \{ 1, \ldots, \#\sP_j - 1 \}$,
$$ b(r(j) + i) = b(r(j)) + \#(\ell_j \circ \gamma_j)^{-1}( \{0, \ldots, i - 1 \}). $$

To prove that the sequence $(b(r))_{r \ge 0}$ satisfies the desired property, define for each $r \ge 0$ the map $\xi_r : I_{r + 1} \to I_r$, by
$$ \xi_r^{-1} (0) = \{ b(r) - r, b(r) - r + 1, \ldots, b(r + 1) - (r + 1) \}, $$
and for each $i \in I_r \setminus \{ 0 \}$ by $\xi_r^{-1}(i) = i - 1$.
Note that for each $i \in I_{r + 1}$ we have $\Xi_r(\vec{e}_i) = \vec{e}_{\xi_r(i)}$.
Then it is easy to check that for each $j \ge 1$ we have
$$ \ell_j \circ \gamma_j = (\xi_{r(j)} \circ \cdots \circ \xi_{r(j + 1) - 1}) \circ \gamma_{j + 1}.$$
So, if for each $j \ge 1$ we denote by $\Gamma_j : \RR^{\sP_j} \to \RR^{I_{r(j)}}$ the stochastic linear map such that for each $P \in \sP_j$ we have $\Gamma_j (\vec{e}_P) = \vec{e}_{\gamma_j(P)}$, then
$$ L_j \circ \Gamma_j = (\Xi_{r(j)} \circ \cdots \circ \Xi_{r(j + 1) - 1}) \circ \Gamma_{j + 1}.$$
It follows that the sequence of linear maps $(\Gamma_j)_{j \ge 1}$ induces an affine homeomorphism between $\varprojlim_{j} (\Delta_{\sP_j}, L_j)$ and $\varprojlim_{r} (\Delta_{I_r}, \Xi_r)$.
\end{proof}

\section{Generalized odometers and \pcss}\label{s:measure isomorphism}
The purpose of this section is to prove that for a unimodal map whose kneading map~$Q$ is non-decreasing and diverging, the space of invariant probability measures supported on the \pcs{} is affine homeomorphic to the space of invariant probability measures of the generalized odometer associated to~$Q$ (Theorem~\ref{t:measure isomorphism}).
This last space was introduced in~\cite{BruKelsPi97}; we recall its definition~\S\ref{ss:kneading odometer} and in~\S\ref{ss:unique ergodicity} we show that in certain cases this system is uniquely ergodic.
The statement and proof of Theorem~\ref{t:measure isomorphism} is in~\S\ref{ss:measure isomorphism}.
See~\cite{BarDowLia02,GraLiaTic95} for background on generalized odometers.
\subsection{The generalized odometer associated to a kneading map}\label{ss:kneading odometer}
Let $Q : \NN \to \NN$ be a kneading map and put
\begin{multline*}
\Omega_Q \= \{ (x_k)_{k \ge 0} \in \{ 0, 1 \}^\NN \mid x_k = 1 \text{ implies that for each}
\\
j = Q(k + 1), \ldots, k - 1 \text{ we have $x_j = 0$} \}.
\end{multline*}
If we denote by $(S_k)_{k \ge 0}$ the strictly increasing sequence of positive integers defined recursively by $S_0 = 1$ and $S_k = S_{k - 1} + S_{Q(k)}$,
it can be shown that for each non-negative integer~$n$ there is a unique sequence $\expansion{n} \= (x_k)_{k \ge 0}$ in~$\Omega_Q$, that has at most finitely many~$1$'s, and such that~$\sum_{k \ge 0} x_k S_k = n$.
The sequence $\expansion{n}$ is also characterized as the unique sequence in $\{0, 1 \}^\NN$ with finitely many~$1$'s such that $\sum_{k \ge 0} x_k S_k = n$, and that it is minimal with this property with respect to the lexicographical order in~$\{0, 1 \}^\NN$.

When~$Q$ diverges the map defined on the subset~$ \{ \expansion{n} \mid n \in \NN \}$ of~$\Omega_Q$ by $\expansion{n} \mapsto \expansion{n + 1}$, extends continuously to a map~$T_Q : \Omega_Q \to \Omega_Q$ which is onto, minimal, and such that~$T_Q^{-1}$ is well defined on $\Omega_Q \setminus \expansion{0}$; see~\cite[Lemma~2]{BruKelsPi97}.
We call $(\Omega_Q, T_Q)$ the \textit{generalized odometer} associated to~$Q$.
Given $x = (x_k)_{k \ge 0} \in \Omega_Q$ and an integer $n \ge 0$, put $\sigma(x|n) = \sum_{k = 0}^nx_kS_k$.
Observe that $\sigma(x|n)$ is non-decreasing with~$n$, and when~$x$ has infinitely many~$1$'s, $\sigma(x|n) \to + \infty$ as $n \to + \infty$.
On the other hand, if~$x$ has at most a finite number of~$1$'s, then $\sigma(x) \= \lim_{n \to + \infty} \sigma(x|n)$ is finite and $x = \expansion{\sigma(x)}$.

For $x = (x_k)_{k \ge 0}$ different from $\expansion{0}$ we denote by~$q(x) \ge 0$ the least integer such that $x_{q(x)} \neq 0$. 
In~\cite[Theorem~1]{BruKelsPi97} it is shown that if $\parameter \in (0, 4]$ is a parameter such that the kneading map of the logistic map~$f_\parameter$ is equal to~$Q$, then for each $x \in \Omega_Q$ with infinitely many~$1$'s the sequence of intervals $(D_{\sigma(x|n)})_{n \ge q(x)}$ is nested and that $\bigcap_{n \ge q(x)} D_{\sigma(x|n)}$ is reduced to a point belonging to the \pcs~$X_{f_\parameter}$ of~$f_\parameter$.
Furthermore, if we denote this point by~$\pi(x)$ and for $n \ge 0$ we put $\pi(\expansion{n}) = f_\parameter^n(c)$, then the map $\pi : \Omega_Q \to X_{f_\parameter}$ so defined is continuous and conjugates the action of~$T_Q$ on~$\Omega_Q$, to the action of~$f_\parameter$ on~$X_{f_\parameter}$.
\subsection{Generalized odometers as odometers}\label{ss:unique ergodicity}
Let $(p_j)_{j \ge 0}$ be an increasing sequence of positive integers such that for each $j \ge 0$ we have $p_j | p_{j + 1}$.
For each $j \ge 0$ let $\tau_j : \ZZ/p_{j + 1} \ZZ \to \ZZ/p_j \ZZ$ be the reduction map, and $T_j : \ZZ / p_j \ZZ \to \ZZ / p_j \ZZ$ the translation by~1.
Then the \textit{odometer} associated to the sequence~$(p_j)_{j \ge 0}$ is by definition the map~$T$ acting on the inverse limit $\varprojlim_{j} \left( {\ZZ} / p_j {\ZZ}, \tau_j \right)$, that is defined by $T \left( (x_j)_{j \ge 0} \right) = \left( T_j(x_j) \right)_{j \ge 0}$.
It is well-known that each odometer is uniquely ergodic.
\begin{lemma}\label{l:unique ergodicity}
Let~$Q$ be a non-decreasing kneading map such that there is an increasing sequence of positive integers~$(k_j)_{j \ge 0}$ such that for each $j \ge 0$ we have $Q(k_j + 1) = k_j$.
Then the following properties hold.
\begin{enumerate}
\item[1.]
For each $j \ge 0$ and $k \ge k_j$ the integer~$S_{k_j}$ divides $S_{k}$.
In particular~$S_{k_j}$ divides~$S_{k_{j + 1}}$.
\item[2.]
The generalized odometer $(\Omega_Q, T_Q)$ is topologically conjugated to the odometer associated to the sequence~$(S_{k_j})_{j \ge 0}$.
In particular~$(\Omega_Q, T_Q)$ is uniquely ergodic.
\end{enumerate}
\end{lemma}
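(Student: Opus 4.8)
For the first assertion, fix $j$ and argue by induction on $k \geq k_j$ that $S_{k_j} \mid S_k$. The case $k = k_j$ is trivial. If $k > k_j$ then, $Q$ being non-decreasing, $Q(k) \geq Q(k_j + 1) = k_j$, while $Q(k) \leq k - 1$ since $Q$ is a kneading map; hence $k - 1$ and $Q(k)$ both lie in $\{k_j, \ldots, k - 1\}$, so by the inductive hypothesis $S_{k_j}$ divides $S_{k-1}$ and $S_{Q(k)}$, hence also $S_k = S_{k-1} + S_{Q(k)}$. The last clause is the case $k = k_{j+1}$. Since $(k_j)_{j \geq 0}$ is increasing we have $k_j \to \infty$, so $Q$ diverges (and $T_Q$ is defined), and by the first assertion $(S_{k_j})_{j \geq 0}$ is increasing with $S_{k_j} \mid S_{k_{j+1}}$, hence a legitimate input for the odometer construction.

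For the second assertion, the plan is to construct an explicit topological conjugacy $\phi$ from $(\Omega_Q, T_Q)$ onto the odometer $T$ on $\varprojlim_j (\ZZ/S_{k_j}\ZZ, \tau_j)$, and then quote that odometers are uniquely ergodic. The one genuinely nontrivial ingredient, which I would state as a separate lemma, is that for every $m \geq 0$ the map $(x_0, \ldots, x_{m-1}) \mapsto \sum_{i=0}^{m-1} x_i S_i$ restricts to a \emph{bijection} from the set of words in $\{0,1\}^m$ occurring as initial segments of elements of $\Omega_Q$ onto $\{0, 1, \ldots, S_m - 1\}$. I would prove this by induction on $m$: an admissible word of length $m + 1$ either ends in $0$, in which case it is an arbitrary admissible word of length $m$ and accounts for the values $0, \ldots, S_m - 1$, or it ends in $1$, in which case the defining constraint of $\Omega_Q$ forces its entries in positions $Q(m+1), \ldots, m - 1$ to vanish, so it is determined by an arbitrary admissible word of length $Q(m+1)$ and accounts for the values $S_m, \ldots, S_m + S_{Q(m+1)} - 1$; as $S_{m+1} = S_m + S_{Q(m+1)}$, these two ranges tile $\{0, \ldots, S_{m+1} - 1\}$.

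Granting the lemma, define $\phi = (\phi_j)_{j \geq 0}$ by letting $\phi_j(x)$ be the class of $\sigma(x|k_j - 1) = \sum_{i=0}^{k_j - 1} x_i S_i$ modulo $S_{k_j}$. The first assertion gives $\tau_j \circ \phi_{j+1} = \phi_j$, so $\phi$ takes values in the inverse limit, and $\phi$ is continuous since each $\phi_j$ factors through finitely many coordinates. Injectivity of $\phi$ follows from injectivity in the lemma (equal images force, for every $j$, equal length-$k_j$ initial segments, hence $x = y$), and surjectivity from surjectivity in the lemma: given a compatible sequence $(a_j)_{j \geq 0}$, choose for each $j$ the admissible word of length $k_j$ whose $\sigma$-value is the representative of $a_j$ in $\{0, \ldots, S_{k_j} - 1\}$, and observe that the first assertion together with the relations $\tau_j(a_{j+1}) = a_j$ forces these words to be nested, so they assemble into a point $x \in \Omega_Q$ with $\phi(x) = (a_j)_{j \geq 0}$. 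As $\Omega_Q$ is compact and the target Hausdorff, $\phi$ is a homeomorphism. That $\phi$ conjugates $T_Q$ to $T$ is checked on the dense set $\{\expansion{n} \mid n \in \NN\}$, where the first assertion gives $\phi_j(\expansion{n}) = n \bmod S_{k_j}$ and hence $\phi(T_Q\expansion{n}) = \phi(\expansion{n+1}) = ((n+1) \bmod S_{k_j})_{j \geq 0} = T\phi(\expansion{n})$; unique ergodicity of $(\Omega_Q, T_Q)$ is then inherited from the odometer.

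The main obstacle is exactly the bijection lemma of the second paragraph — the statement that admissible $\{0,1\}$-words with the weights $(S_k)_{k \geq 0}$ form a faithful mixed-radix numeral system for $\NN$; everything else is then bookkeeping driven by the divisibility proved in the first assertion.
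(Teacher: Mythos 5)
Your proof is correct and follows essentially the same route as the paper: the same maps $x \mapsto \sum_{i<k_j} x_i S_i \bmod S_{k_j}$, with part~1 giving compatibility with the reductions $\tau_j$. The only difference is that you prove in full the mixed-radix bijection (which the paper treats as immediate from the definition of $\Omega_Q$ and a citation to Lemma~3 of Bruin--Keller--St.~Pierre) and verify the conjugacy on the dense orbit of $\expansion{0}$ instead of coordinate-wise; these are elaborations, not a different argument.
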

\begin{proof}
Let~$(\tau_j)_{j \ge 0}$, $(T_j)_{j \ge 0}$ and~$T$ be the maps defined above the statement of the lemma when $(p_j)_{j \ge 0} \= (S_{k_j})_{j \ge 0}$.

\partn{1}
Given~$j \ge 0$ we proceed by induction in~$k \ge k_j$.
The case $k = k_j$ being trivial, we suppose that the integer $k \ge k_j$ is such that for each $k' = k_j, \ldots, k$ the integer~$S_{k_j}$ divides~$S_{k'}$.
As $Q(k + 1) \le k$ and $Q(k + 1) \ge Q(k_j + 1) = k_j$, it follows that~$S_{k_j}$ divides~$S_{Q(k + 1)}$ and $S_{k + 1} = S_k + S_{Q(k + 1)}$.

\partn{2}
For each $j \ge 0$ define the map $\pi_j:\Omega_Q \to \ZZ/S_{k_j}\ZZ$ by
$$
\pi_j((e_n)_{n\geq 0})=e_0S_0+\cdots +e_{k_j-1}S_{k_j-1} \mod S_{k_j}.
$$
By part~1 it follows that for each $j \ge 0$ we have $\tau_j \circ \pi_{j + 1} = \pi_j$.
On the other hand, from the definition of~$\Omega_Q$ we have $\pi_j\circ T_Q = T_j\circ\pi_j$ (see also~\cite[Lemma~3]{BruKelsPi97}.)
Therefore the map $\pi : \Omega_Q \to \varprojlim_{j} (\ZZ/p_j \ZZ, \tau_j)$ defined by $x \mapsto (\pi_j(x))_{j \ge 0}$ satisfies $\pi \circ T_Q = T \circ \pi$.
The map~$\pi$ is clearly continuous and onto.
To show that~$\pi$ is injective just observe that, from the definition of~$\Omega_Q$ it follows that, if for some $j \ge 0$ and $(e_n)_{n \ge 0}, (e_n')_{n \ge 0} \in \Omega_Q$ we have $\pi_j \left( (e_n)_{n \ge 0} \right) = \pi_j \left( (e_n')_{n \ge 0} \right)$, then for each $n = 0, \ldots, k_j - 1$ we have $e_n = e_n'$.
\end{proof}
\subsection{From the generalized odometer to the \pcs}\label{ss:measure isomorphism}
The purpose of this section is to prove the following theorem.
\begin{theoalph}\label{t:measure isomorphism}
Let~$Q$ be a diverging and non-decreasing kneading map and let $(\Omega_Q, T_Q)$ be the corresponding generalized odometer.
Let~$f$ be a unimodal map whose kneading map is equal to~$Q$, and denote by~$X_f$ its \pcs.
Then the space of invariant probability measures of~$(X_f, f|_{X_f})$ is affine homeomorphic to that of~$(\Omega_Q, T_Q)$.
\end{theoalph}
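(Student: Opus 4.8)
The plan is to build the asserted affine homeomorphism out of the factor map $\pi\colon\Omega_Q\to X_{f_\parameter}$ recalled in~\S\ref{ss:kneading odometer}, after reducing to the logistic case. Since the logistic family is full, fix $\parameter\in(0,4]$ whose kneading map is~$Q$; by Proposition~\ref{p:continuous unimodal} the space of invariant probability measures of $f$ supported on $X_f$ is affine homeomorphic to that of $f_\parameter$ supported on $X_{f_\parameter}\=X$, so it suffices to prove the theorem for $f_\parameter$. The map $\pi$ is continuous, onto, and satisfies $\pi\circ T_Q=f_\parameter\circ\pi$, so it induces a push-forward $\pi_*$ from the set of $T_Q$-invariant probability measures to the set of $f_\parameter$-invariant probability measures supported on~$X$; this map is affine and, since $\int g\,d(\pi_*\mu)=\int(g\circ\pi)\,d\mu$ for $g\in C(X)$, continuous for the weak$^*$ topologies. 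It is also onto: given an $f_\parameter$-invariant~$\nu$, pick a Borel probability measure $\rho$ on $\Omega_Q$ with $\pi_*\rho=\nu$ (for instance $\rho=s_*\nu$ for a Borel section~$s$ of~$\pi$), and let $\mu$ be a weak$^*$ accumulation point of the Ces\`aro averages of $\big((T_Q^k)_*\rho\big)_{k\ge0}$; then $\mu$ is $T_Q$-invariant, and $\pi_*\mu=\nu$ because $\pi_*$ is continuous and $\nu$ is $f_\parameter$-invariant.

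Everything thus reduces to the injectivity of $\pi_*$, for which I would prove that $\pi$ is injective off a countable set: the set $N\=\{x\in\Omega_Q\mid\pi^{-1}(\pi(x))\neq\{x\}\}$ is countable. Granting this, observe that both $(\Omega_Q,T_Q)$ and $(X,f_\parameter|_X)$ are infinite (the latter is a Cantor set by Proposition~\ref{p:continuous unimodal}, and $\pi$ maps $\Omega_Q$ onto it) minimal systems, hence have no periodic points, hence carry only non-atomic invariant probability measures; consequently $\mu(N)=0$ for every $T_Q$-invariant~$\mu$ and $\nu(\pi(N))=0$ for every $f_\parameter$-invariant~$\nu$ supported on~$X$. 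On the co-countable Borel sets $\Omega_0\=\Omega_Q\setminus N$ and $X\setminus\pi(N)$ the map $\pi$ restricts to a continuous bijection, which is a Borel isomorphism (an injective continuous map has Borel image and Borel inverse, by the Lusin--Souslin theorem). Since $\mu$ is supported on $\Omega_0$ and $\pi_*\mu$ on $X\setminus\pi(N)$, one has $\mu=\big((\pi|_{\Omega_0})^{-1}\big)_*(\pi_*\mu)$, so $\pi_*\mu$ determines~$\mu$; this is the desired injectivity. A continuous affine bijection from the compact metrizable convex set of $T_Q$-invariant measures is then an affine homeomorphism, and composing it with the affine homeomorphism of Proposition~\ref{p:continuous unimodal} yields the theorem.

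I expect the countability of~$N$ to be the only real difficulty, and this is where the combinatorics of~$\pi$ from~\cite{BruKelsPi97} enters. The point $\pi(\langle n\rangle)=f_\parameter^n(c)$ runs injectively (no periodic points) through the orbit $\mathrm{orb}(c)$ of the critical point, while for $x$ with infinitely many~$1$'s the point $\pi(x)$ is the unique point of the nested intersection $\bigcap_{n}D_{\sigma(x|n)}$, each interval $D_{\sigma(x|n)}$ having both endpoints in $\mathrm{orb}(c)$. A careful analysis of these intervals, in the spirit of~\cite{BruKelsPi97,Bru03} and using that $Q$ is non-decreasing, should show that a coincidence $\pi(x)=\pi(x')$ with $x\neq x'$ forces $\pi(x)$ to be an endpoint of some~$D_k$, hence to lie in the countable set $\mathrm{orb}(c)$, and moreover that each fibre $\pi^{-1}(f_\parameter^n(c))$ is finite; this gives $N\subseteq\pi^{-1}(\mathrm{orb}(c))$, which is countable. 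All the remaining ingredients are soft.
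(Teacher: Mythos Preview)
Your overall strategy matches the paper's: reduce to the logistic case via Proposition~\ref{p:continuous unimodal}, use the factor map~$\pi$, and show $\pi_*$ is an affine homeomorphism by controlling the non-injectivity locus of~$\pi$. The soft parts (continuity, surjectivity via Ces\`aro averages, non-atomicity of invariant measures on infinite minimal systems) are all fine.

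The gap is precisely the step you defer: the countability of~$N$. You write that a careful analysis ``should show'' coincidences force $\pi(x)\in\mathrm{orb}(c)$, citing~\cite{BruKelsPi97,Bru03}, but this is where the real work lies and it is more delicate than your heuristic about endpoints of~$D_k$ suggests. The paper does \emph{not} prove countability of~$N$ for every non-decreasing diverging~$Q$; it splits into two cases. When $Q(k)\le k-2$ for all large~$k$, it proves (Proposition~\ref{p:almost isomorphism}) that every point outside the \emph{grand} orbit of~$c$ has a unique $\pi$-preimage, via an injectivity criterion (Lemma~\ref{l:injectivity criterion}) that requires the auxiliary hypothesis $Q(k+1)\ge Q(Q(Q(k))+1)+2$ and rests on a geometric lemma of Bruin from~\cite{Bru99} (not~\cite{BruKelsPi97,Bru03}), together with a combinatorial lemma (Lemma~\ref{l:criterion satisfied}) that moves any two distinct points off~$\mathcal{O}(\langle 0\rangle)$ by some $T_Q^m$ so that the criterion applies. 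In the complementary case---infinitely many~$k$ with $Q(k)=k-1$---the paper never analyses the fibres of~$\pi$: it shows instead (Lemma~\ref{l:unique ergodicity}) that $(\Omega_Q,T_Q)$ is conjugate to a classical odometer, hence uniquely ergodic, so both measure spaces are singletons. To close your argument you must either reproduce this case split or prove countability of~$N$ uniformly, which the paper does not do.
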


The proof is based on the following proposition.
\begin{proposition}
Let~$\parameter \in (0, 4]$ be a parameter such that the kneading map~$Q$ of the logistic map~$f_\parameter$ diverges, is non-decreasing, and such that for each sufficiently large~$k \ge 0$ we have
 \begin{equation}\label{11}
Q(k + 1) \ge Q(Q(Q(k))+1) + 2.
\end{equation}
Denote by~$X_{f_\parameter}$ the \pcs{} of~$f_\parameter$, by $(\Omega_Q, T_Q)$ the generalized odometer associated to~$Q$, and by $\pi : \Omega_Q \to X_{f_\parameter}$ the projection defined in~\S\ref{ss:kneading odometer}.
Then each point in~$X_{f_\parameter}$ that is not in the backward orbit of the critical point of~$f_\parameter$ has a unique preimage by~$\pi$.
Furthermore, $\pi$ induces a linear homeomorphism between the space of invariant probability measures of~$(\Omega_Q, T_Q)$, and that of~$(f_\parameter|_{X_{f_\parameter}}, X_{f_\parameter})$.
\label{p:almost isomorphism}
\end{proposition}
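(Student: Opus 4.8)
The plan is to prove the two assertions in turn: the first, combinatorial one is the heart of the argument, and the statement about invariant measures will follow from soft considerations once the first is known.

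For the first assertion I would use the two descriptions of $\pi$ recalled in~\S\ref{ss:kneading odometer}: on the dense set $\{\expansion{n}\mid n\in\NN\}$ one has $\pi(\expansion{n})=f_\parameter^n(c)$, while on a sequence $x=(x_k)_{k\ge0}\in\Omega_Q$ with infinitely many~$1$'s one has $\{\pi(x)\}=\bigcap_{n\ge q(x)}D_{\sigma(x|n)}$. Write $\B\=\bigcup_{n\ge0}f_\parameter^{-n}(c)\cap X_{f_\parameter}$ for the backward orbit of the critical point inside the \pcs{}; since $f_\parameter$ is (at most) two-to-one, $\B$ is countable. The claim to prove is that if $x\neq y$ in $\Omega_Q$ satisfy $\pi(x)=\pi(y)$, then this common value lies in~$\B$. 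Letting $k_0$ be the least index at which $x$ and $y$ differ, so that they agree below $k_0$ and $m=\sigma(x|k_0-1)=\sigma(y|k_0-1)$, the point $p\=\pi(x)$ is the common limit of the two nested chains of intervals $(D_{\sigma(x|n)})_{n\ge k_0}$ and $(D_{\sigma(y|n)})_{n\ge k_0}$, which approach $p$ from opposite sides. Using the recursion $S_k=S_{k-1}+S_{Q(k)}$ and the fact that the endpoints of every interval $D_j$ lie on the forward orbit $\{f_\parameter^i(c)\mid i\ge0\}$ of~$c$ (with $c\in D_j$ precisely when $j$ is a cutting time), one can follow these endpoints along the two chains and conclude that $p$ is an iterated preimage of~$c$, i.e.\ $p\in\B$. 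I expect the hypothesis~\eqref{11} to enter exactly here: it should force the intervals $D_{\sigma(x|n)}$ and $D_{\sigma(y|n)}$ to contract around~$p$ quickly enough that the only way two distinct chains can have the same limit is the unavoidable ``carrying'' ambiguity between a finite and an infinite $Q$-expansion, and that the resulting branch point lies over~$c$. Granting this, every point of $X_{f_\parameter}\setminus\B$ has a unique $\pi$-preimage.

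The statement about measures then follows. The push-forward $\pi_*$ is a weak$^*$-continuous map from the (compact, metrizable) set of $T_Q$-invariant probability measures on $\Omega_Q$ into the corresponding set for $f_\parameter|_{X_{f_\parameter}}$, and it is affine, being the restriction of a linear map, since $\pi$ is continuous and conjugates $T_Q$ to $f_\parameter|_{X_{f_\parameter}}$. It is \emph{onto}: given an $f_\parameter$-invariant $m$, choose any probability measure $\nu$ on $\Omega_Q$ with $\pi_*\nu=m$ (possible since $\pi$ is a surjection of compact metric spaces) and let $\mu$ be a weak$^*$-accumulation point of the averages $\frac1N\sum_{n=0}^{N-1}(T_Q^n)_*\nu$; then $\mu$ is $T_Q$-invariant and $\pi_*\mu=\lim\frac1N\sum_{n=0}^{N-1}(f_\parameter^n)_*m=m$. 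It is \emph{injective}: both $(\Omega_Q,T_Q)$ (minimal on an infinite compact set, by~\S\ref{ss:kneading odometer}) and $(X_{f_\parameter},f_\parameter|_{X_{f_\parameter}})$ (minimal by Proposition~\ref{p:continuous unimodal}) have no periodic orbits, and an invariant measure with an atom would have a nonempty finite set of atoms of maximal mass, which is forward invariant and hence contains a periodic orbit; so all invariant measures of both systems are non-atomic. In particular every $f_\parameter$-invariant $m$ gives $\B$ zero mass, whence every $T_Q$-invariant $\mu$ with $\pi_*\mu=m$ gives $\pi^{-1}(\B)$ zero mass and is concentrated on $\pi^{-1}(X_{f_\parameter}\setminus\B)$, on which $\pi$ is injective by the first part. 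Since injective continuous images of Borel sets are Borel, for every Borel $A\subseteq\Omega_Q$ one gets $\mu(A)=m\bigl(\pi\bigl(A\setminus\pi^{-1}(\B)\bigr)\bigr)$, an expression depending on $m$ only; hence $\pi_*$ is injective. A continuous affine bijection between compact metrizable simplices is an affine (indeed linear) homeomorphism, which gives the last claim.

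The main obstacle is the first assertion: the measure-theoretic half is routine once it is in place, but showing that $\pi$ fails to be injective only over $\B$ — and in particular seeing precisely how~\eqref{11} rules out every coincidence other than the finite-versus-infinite expansion one — requires careful bookkeeping with the cutting-time intervals $D_j$ and with the way the recursion $S_k=S_{k-1}+S_{Q(k)}$ controls their relative positions and sizes. I expect this to run parallel to, and slightly streamline, the analysis in~\cite{Bru03} and~\cite{BruKelsPi97}.
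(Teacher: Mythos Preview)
Your treatment of the second assertion is correct, and in fact more explicit than the paper's on surjectivity of $\pi_*$ (the paper simply notes that invariant measures of $T_Q$ cannot charge points, hence not $\mathcal{O}(\expansion{0})$, and leaves the rest implicit).

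The first assertion, however, is not proved: you describe a plan (``follow the endpoints along the two chains and conclude that $p$ is an iterated preimage of $c$''), state an expectation about where~\eqref{11} should enter, and then write ``Granting this.'' That is the entire difficulty, and the paper's argument is structurally different from what you sketch. It does not track two chains of intervals towards a common limit; it proves directly that $\pi$ is injective on $\Omega_Q\setminus\mathcal{O}(\expansion{0})$ via two lemmas. Lemma~\ref{l:injectivity criterion}: under~\eqref{11} there is $K$ such that if $x,x'\notin\mathcal{O}(\expansion{0})$ satisfy $\max\{q(x),q(x')\}\ge K$ and $Q(q(x)+1)\neq Q(q(x')+1)$, then $\pi(x)\neq\pi(x')$; concretely, with (say) $q(x)>q(x')$ and $t$ the index of the second $1$ in $x$, one shows that $D_{S_{q(x')}}$ and $D_{S_{q(x)}+S_t}$ are already disjoint. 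This is where~\eqref{11} is used, via~\cite[Lemma~4]{Bru99}, and it concerns the relative \emph{position} of certain intervals and closest-return points, not contraction rates. Lemma~\ref{l:criterion satisfied}: for any distinct $x,x'\notin\mathcal{O}(\expansion{0})$ one can find $m\in\ZZ$ so that $T_Q^m x,\,T_Q^m x'$ satisfy the hypotheses of Lemma~\ref{l:injectivity criterion}; since $\pi$ semiconjugates $T_Q$ to $f_\parameter$ and $T_Q^{-1}$ is defined off $\mathcal{O}(\expansion{0})$, this forces $\pi(x)\neq\pi(x')$. The iteration trick---replace $x,x'$ by suitable $T_Q^m$-translates until a geometric separation criterion applies---is the key idea your sketch is missing. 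The final step (from injectivity on $\Omega_Q\setminus\mathcal{O}(\expansion{0})$ to unique preimages off the backward orbit of $c$) uses $\pi^{-1}(c)=\{\expansion{0}\}$ from~\cite{BruKelsPi97}, as you anticipated.
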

\begin{remark}\label{r:simple 11}
A diverging and non-decreasing kneading map~$Q$ such that for each sufficiently large $k \ge 2$ we have $Q(k) \le k - 2$, satisfies inequality~\eqref{11} for each sufficiently large~$k$.
In fact, let~$Q$ be such a map and let $(q_r)_{r \ge 0}$ be the strictly increasing sequence of integers defined by $ Q(\NN) = \{ q_r \mid r \ge 0 \} $.
Fix $r \ge 2$ sufficiently large so that for each $k \ge q_{r - 2}$ we have $Q(k) \le k - 2$ and fix~$k \ge 1$ such that~$Q(k) = q_r$.
Then we have $Q(q_r) \le q_r - 2$, so $Q(q_r) \le q_{r - 1}$,
$$ Q(Q(Q(k)) + 1) \le Q(q_{r - 1} + 1) \le q_{r - 1} - 1, $$
and
$$ Q(Q(Q(k)) + 1) \le q_{r - 2} \le q_r - 2 \le Q(k + 1) - 2. $$
\end{remark}
The proof of Proposition~\ref{p:almost isomorphism} is below.
We will first prove Theorem~\ref{t:measure isomorphism} assuming this proposition.
\begin{proof}[Proof of Theorem~\ref{t:measure isomorphism} given Proposition~\ref{p:almost isomorphism}]
Since the logistic family is full there is a parameter $\parameter \in (0, 4]$ such that the kneading map of the logistic map~$f_\parameter$ is the same as that of~$f$.
In view of Proposition~\ref{p:continuous unimodal}, it is enough to prove the desired assertion of~$f_\parameter$ instead of~$f$.

If for each sufficiently large~$k \ge 0$ we have~$Q(k) \le k - 2$, then the assertion is given by Proposition~\ref{p:almost isomorphism} and Remark~\ref{r:simple 11}.
If this is not satisfied, then there are infinitely many integers $k \ge 0$ such that $Q(k) = k - 1$, so Lemma~\ref{l:unique ergodicity} implies that the generalized odometer $(\Omega_Q, T_Q)$ is uniquely ergodic.
It thus follows that the action of~$f_\parameter$ on its \pcs{} is uniquely ergodic as well.
So the assertion of theorem is also satisfied in this case.
\end{proof}

The rest of this section is devoted to the proof of Proposition~\ref{p:almost isomorphism}.
Let~$Q$ be a diverging kneading map and let $(\Omega_Q, T_Q)$ be the corresponding generalized odometer.
Recall that $T_Q^{-1}$ is well defined on $\Omega_Q \setminus \expansion{0}$.
So, if we denote by $\mathcal{O}(\expansion{0})$ the grand orbit of $\expansion{0}$, then
$$ T^{-1}(\Omega_Q \setminus \mathcal{O}(\expansion{0}))
=
\Omega_Q \setminus \mathcal{O}(\expansion{0}), $$
and all negative iterates of~$T_Q$ are well defined on $\Omega_Q \setminus \mathcal{O}(\expansion{0})$.

The proof of Proposition~\ref{p:almost isomorphism} is based on Lemma~\ref{l:injectivity criterion} and Lemma~\ref{l:criterion satisfied} below.
Recall that for $x = (x_k)_{k \ge 0} \in \Omega_Q$ different from~$\expansion{0}$, we denote by~$q(x)$ the least integer $k \ge 0$ such that $x_k = 1$.
The proof of the following lemma is based on some results and ideas of~\cite{Bru99}.
\begin{lemma}
\label{l:injectivity criterion}
Let~$\parameter \in (0, 4]$ be a parameter such that the kneading map~$Q$ of the logistic map~$f_\parameter$ diverges and such that for every sufficiently large integer $k \ge 0$ inequality~\eqref{11} is satisfied.
Then there is a constant~$K > 0$ such that for every pair of distinct points~$x, x' \in \Omega_Q \setminus \mathcal{O}(\expansion{0})$ that satisfy
$$ \max\{q(x), q(x'))\} \ge K
\text{ and }
Q(q(x) + 1) \neq Q(q(x') + 1), $$
we have $\pi(x)\neq \pi(x')$.
\end{lemma}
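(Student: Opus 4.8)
The plan is to confine $\pi(x)$, for $q(x)=k$, to a closed interval $W_k$ having $c_{S_k}:=f_\parameter^{S_k}(c)$ as one endpoint and with its other endpoint strictly between $c_{S_k}$ and the critical point~$c$; to check that for large~$k$ this interval depends on~$k$ only through $Q(k+1)$; and then to deduce from~\eqref{11} that $Q(k+1)\ne Q(k'+1)$ forces $W_k\cap W_{k'}=\emptyset$ whenever $\max\{k,k'\}$ is large. Since $\pi(x)\in W_{q(x)}$ and $\pi(x')\in W_{q(x')}$, this gives $\pi(x)\ne\pi(x')$.

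First a reduction. By the uniqueness of the finite $S$-adic representations recalled in~\S\ref{ss:kneading odometer}, every element of $\Omega_Q$ with only finitely many~$1$'s is some $\expansion{n}$, hence lies in $\mathcal{O}(\expansion{0})$; so both~$x$ and~$x'$ have infinitely many~$1$'s, and $\pi(x)=\bigcap_{n\ge q(x)}D_{\sigma(x|n)}$ where $\sigma(x|n)=\sum_{j\le n}x_jS_j$. Writing $k=q(x)$, we have $\sigma(x|k)=S_k$, so $\pi(x)\in D_{S_k}$; and from $D_{S_{k-1}+1}=[c_{S_{k-1}+1},c_1]$, the identity $S_k-S_{k-1}=S_{Q(k)}$, and the fact that none of $D_{S_{k-1}+1},\dots,D_{S_k-1}$ contains~$c$, I would obtain $D_{S_k}=f_\parameter^{\,S_{Q(k)}-1}([c_{S_{k-1}+1},c_1])$, an interval with endpoints $c_{S_k}$ and $c_{S_{Q(k)}}$ containing~$c$ in its interior (as~$c$ is not periodic). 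Let $k+m$ be the position of the second~$1$ of~$x$, which exists; admissibility forces $Q(k+m+1)\ge k+1$, and a computation with the recursion then gives $D_{\sigma(x|k+m)}=D_{S_k+S_{k+m}}=[c_{S_k},c_{S_k+S_{k+m}}]$, an interval not containing~$c$ (otherwise $S_k+S_{k+m}$ would be a cutting time with two finite representations in $\Omega_Q$), nested inside $D_{S_k}$, and hence contained in the component of $D_{S_k}\setminus\{c\}$ bounded by $c_{S_k}$. So $\pi(x)\ne c$, and $\pi(x)$ lies strictly between $c_{S_k}$ and~$c$, on the $c_{S_k}$ side.

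To turn this into a bound not depending on the tail of~$x$, I would observe that as the second-one position $k+m$ ranges over admissible values, the elements $(0^k,1,0^{m-1},1,0^\infty)$ of $\Omega_Q$ converge to $\expansion{S_k}$, so by continuity of~$\pi$ the points $c_{S_k+S_{k+m}}$ accumulate only on $c_{S_k}$; hence $\bigcup_m D_{S_k+S_{k+m}}$ is a single interval $W_k=[c_{S_k},\beta_k]$ whose far endpoint $\beta_k$ is attained at a finite~$m$ and lies strictly between $c_{S_k}$ and~$c$, and $\pi(x)\in W_k$. Now compare $W_k$ and $W_{k'}$. If $c_{S_k}$ and $c_{S_{k'}}$ lie on opposite sides of~$c$, then $W_k$ and $W_{k'}$ do too, and since neither contains~$c$ they are disjoint and we are done — this uses neither~\eqref{11} nor largeness. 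The remaining case, $c_{S_k}$ and $c_{S_{k'}}$ on the same side of~$c$, is the delicate one, and here I would argue as in~\cite{Bru99}: $\beta_k$ is a closest precritical point of $f_\parameter$, and since the closest precritical points on a fixed side of~$c$ are linearly ordered by their distance to~$c$, $W_k$ is really the ``annulus'' between~$c$ and such a point, the relevant one having a generation controlled by $Q(k+1)$. Inequality~\eqref{11}, with its explicit slack of~$2$, should force that $Q(k+1)\ne Q(k'+1)$ makes these generations differ by enough that one of $W_k,W_{k'}$ falls into the gap between~$c$ and the inner endpoint of the other, so that $W_k\cap W_{k'}=\emptyset$. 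The constant~$K$ enters here: it is chosen large enough that~\eqref{11} holds from~$K$ on and that the closest precritical points involved are arranged as expected.

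The main obstacle is exactly this same-side comparison: pinning $\beta_k$ down as a closest precritical point of a well-defined generation, and converting~\eqref{11} into genuine \emph{disjointness} of the $W_k$ rather than mere distinctness. This is a quantitative form of the failure of injectivity of $T_Q$ near $\expansion{0}$ — equivalently, a statement about how fast the nested intervals $D_{\sigma(x|n)}$ shrink and how the closest precritical points of $f_\parameter$ interleave with the orbit of~$c$ — and it is here that the methods of~\cite{Bru99} and the hypothesis~\eqref{11} are essential. A threshold~$K$ is unavoidable, since~\eqref{11} is only assumed eventually, and for small~$k$ the intervals $D_{S_k}$, hence the $W_k$, can genuinely overlap.
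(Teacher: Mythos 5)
Your reduction and localization steps are sound and essentially reproduce the first half of the paper's argument: both points have infinitely many $1$'s, and if $t>q(x)$ is the position of the second $1$ of $x$, then $\pi(x)\in D_{S_{q(x)}+S_t}=[c_{S_{q(x)}+S_t},c_{S_{q(x)}}]$, an interval that does not contain $c$ because $S_{q(x)}+S_t$ is not a cutting time. The genuine gap is the step you yourself flag as ``the main obstacle'': converting $Q(q(x)+1)\neq Q(q(x')+1)$ together with~\eqref{11} into actual disjointness in the same-side case. You only assert that one ``would argue as in \cite{Bru99}'' and that~\eqref{11} ``should force'' the separation; but that separation \emph{is} the lemma, and nothing in your text proves it. The paper closes precisely this step by quoting a specific quantitative input, Lemma~4 of \cite{Bru99} (Lemma~\ref{bruin}): for $k\ge K$ the interior of $D_{S_r+S_t}$ cannot both contain $c_{S_k}$ and meet $\{z_{Q(k+1)-1},\hat{z}_{Q(k+1)-1}\}$. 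Combining this (applied at the index of the point whose value of $Q(\cdot+1)$ is larger, which is where the hypothesis $\max\{q(x),q(x')\}\ge K$ is used) with the localization~\eqref{falta1}, namely $c_{S_r}\in[z_{Q(r+1)-1},z_{Q(r+1)}]\cup[\hat{z}_{Q(r+1)},\hat{z}_{Q(r+1)-1}]$, the paper shows in a few lines that $D_{S_r+S_t}$ (for $r=q(x)$ and $t$ the second $1$ of $x$) is disjoint from the cutting-time interval $D_{S_{r'}}$ containing $\pi(x')$: one checks $c_{S_{r'}}\notin D_{S_r+S_t}$ via Lemma~\ref{bruin}, $c\notin D_{S_r+S_t}$ because $S_r+S_t$ is not a cutting time, and $D_{S_r+S_t}\not\subset D_{S_{r'}}$ because $c_{S_r}$ lies outside $[z_{Q(r+1)},\hat{z}_{Q(r+1)}]$ while $D_{S_{r'}}$ lies inside. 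No analogue of this input is stated or proved in your proposal, so as written it is a framing of the problem plus an appeal to \cite{Bru99} for its solution, not a proof.

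Two further points about the route you sketch. First, the far endpoint $\beta_k$ of $W_k=\bigcup_m D_{S_k+S_{k+m}}$ is a point $c_{S_k+S_{k+m}}$ of the critical orbit, not a closest precritical point, so the identification your same-side argument is meant to rest on (``$\beta_k$ is a closest precritical point of a well-defined generation'') is not correct as stated; the closest precritical points enter only as barriers, through~\eqref{falta1} and Lemma~\ref{bruin}. Second, you aim at a stronger statement than needed: disjointness of the whole unions $W_k$ and $W_{k'}$, uniformly over all admissible positions of the second $1$, with each $W_k$ confined to a region depending only on $Q(k+1)$. Bruin's Lemma~4 does not directly give such a confinement of $W_k$: it constrains $D_{S_r+S_t}$ relative to the closest precritical points attached to a \emph{different} index $k$ whose postcritical point lies in its interior. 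The paper avoids this by comparing only the single interval $D_{S_{q(x)}+S_t}$ determined by $x$ with $D_{S_{q(x')}}$, which is exactly the shape of statement Lemma~\ref{bruin} addresses; if you want to keep your $W_k$-formulation, you would have to supply an additional argument for the uniform confinement, or else restructure the comparison as the paper does.
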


The proof of this lemma is based on the following lemma of~\cite{Bru99}.
Denote by~$c$ the critical point of~$f_\parameter$ and for a given $k \ge 0$ denote by~$z_k$ (resp. $\hat{z}_k$) the infimum (resp. supremum) of those points~$z \in [0, c]$ (resp. $z \in [c, 1]$) such that~$f_\parameter^{S_{k + 1}}$ is injective on~$[z, c]$ (resp. $[c, z]$).

\begin{lemma}[\cite{Bru99}, Lemma~4]
\label{bruin}
Let~$\parameter$ and~$Q$ be as in Lemma~\ref{l:injectivity criterion}.
Then there exists a constant~$K > 0$ such that for each $k \ge K$, for each $r \ge 0$ and each $t > r$ satisfying $Q(t + 1) \ge r + 1$, the interior of $D_{S_r + S_t}$ cannot contain~$c_{S_k}$ and at the same time intersect $\{z_{Q(k+1)-1},
\hat{z}_{Q(k+1)-1}\}$.
\end{lemma}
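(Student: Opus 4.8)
We sketch the plan; this is Lemma~4 of~\cite{Bru99}, so the idea is to reproduce Bruin's argument, which rests on the combinatorial theory of cutting times (see also~\cite{Bru95,Hof80}). Write $f=f_\parameter$, let $c$ be its critical point, $c_n=f^n(c)$, and let $(D_n)_{n\ge 1}$, $(S_k)_{k\ge 0}$, $Q$ be as in~\S\ref{ss:cutting and kneading}, so that $S_{k+1}=S_k+S_{Q(k+1)}$ and, more generally, two consecutive cutting times differ by a cutting time. The first step is to realize $D_{S_r+S_t}$ as a monotone forward iterate of a cutting interval. Since $t>r$ and $Q(t+1)\ge r+1$ we have $S_r+S_t<S_t+S_{Q(t+1)}=S_{t+1}$, so $S_r+S_t$ lies strictly between the consecutive cutting times $S_t$ and $S_{t+1}$; in particular it is not a cutting time, hence $c\notin D_{S_r+S_t}$ and this interval lies entirely to one side of $c$. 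Starting from $D_{S_t+1}=[c_{S_t+1},c_1]$ (valid because $S_t$ is a cutting time) and iterating the rule $D_{n+1}=f(D_n)$, which applies as long as $c\notin D_n$, i.e.\ for $n=S_t+1,\dots,S_{t+1}-1$, one gets, using $S_r\le S_{Q(t+1)}-1$ (which holds since $r\le Q(t+1)-1$),
$$ D_{S_r+S_t}=f^{S_r-1}(D_{S_t+1}),\qquad\text{with }f^{S_r-1}\text{ monotone on }D_{S_t+1}. $$

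Next I would recall the meaning of $z_m$ and $\hat z_m$ at the scale that is relevant here. Put $m=Q(k+1)-1$, so that $S_{m+1}=S_{Q(k+1)}$; then $[z_m,c]$ and $[c,\hat z_m]$ are, by definition, the maximal one-sided neighborhoods of $c$ on which $f^{S_{Q(k+1)}}$ is monotone. By the standard analysis of closest precritical points (again~\cite{Bru95,Hof80,Bru99}), both $z_m$ and $\hat z_m$ are precritical points: each satisfies $f^{j_0}(\cdot)=c$ for some $j_0\le S_{Q(k+1)}-1$, and each is the closest point to $c$ on its side whose orbit reaches $c$ within $S_{Q(k+1)}-1$ iterates. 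Because $Q$ diverges, $S_{Q(k+1)}\to\infty$, so $|z_m-c|$, $|\hat z_m-c|$, $|c_{S_k}-c|$ and $|c_{S_{k+1}}-c|$ all tend to $0$; the constant $K$ will be taken large enough to validate the comparisons below for all $k\ge K$.

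The crux is a proof by contradiction. Suppose that for some admissible $k\ge K$, $r$, $t$ the interior of $D_{S_r+S_t}$ contains $c_{S_k}$ and also contains a point $\zeta\in\{z_m,\hat z_m\}$. As $c\notin D_{S_r+S_t}$, this interval lies to one side of $c$ with $\zeta$ in its interior, so it extends past $\zeta$ away from $c$. I would then transport the configuration in two directions: pull $D_{S_r+S_t}$ back along the monotone branch $f^{S_r-1}|_{D_{S_t+1}}$ furnished by the first step, and push the subinterval of $D_{S_r+S_t}$ with endpoints $c_{S_k}$ and $\zeta$ forward by $f^{S_{Q(k+1)}}$; under the latter $c_{S_k}\mapsto c_{S_k+S_{Q(k+1)}}=c_{S_{k+1}}$, which lies within $|c_{S_{k+1}}-c|$ of $c$, whereas $\zeta\mapsto c$. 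Confronting these images with the combinatorial constraints that the presence of $c_{S_k}$ in a non-cutting-time interval places on $D_{S_r+S_t}$ — and this is exactly where the strengthened inequality~\eqref{11}, namely $Q(k+1)\ge Q(Q(Q(k))+1)+2$, enters, since it controls how the orbit of $c$ re-enters a small neighborhood of $c$ after $S_k$ steps — one is forced to produce a precritical point strictly closer to $c$ than $\zeta$ whose orbit reaches $c$ within $S_{Q(k+1)}-1$ iterates, equivalently a cutting time appearing earlier than the recursion $S_{k+1}=S_k+S_{Q(k+1)}$ allows. This contradiction proves the lemma.

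The genuinely delicate part is this last step: one must keep careful track of which side of $c$ each of $c_{S_k}$, $c_{S_{k+1}}$ and $\zeta$ lies on, and of the precise relations between the scales $S_r$, $S_t$ and $S_{Q(k+1)}$, so that both transports are honestly monotone and the ``too close'' precritical point really does contradict the cutting-time recursion. This is the bookkeeping carried out in~\cite{Bru99}, and it is where the hypothesis~\eqref{11} and the divergence of $Q$ are indispensable; by contrast the first two steps are routine consequences of the definitions.
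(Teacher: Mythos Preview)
The paper does not prove this lemma at all: it is quoted verbatim as Lemma~4 of~\cite{Bru99} and used as a black box in the proof of Lemma~\ref{l:injectivity criterion}. There is therefore nothing in the paper to compare your argument against.

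Your sketch is a plausible outline of Bruin's original argument, but as written it remains a plan rather than a proof. The first two steps (locating $S_r+S_t$ strictly between $S_t$ and $S_{t+1}$, and identifying $z_m,\hat z_m$ as closest precritical points) are correct and standard. The third step, however, is where all the content lies, and you acknowledge this yourself: you assert that transporting the configuration forward and backward, together with~\eqref{11}, produces a precritical point contradicting the cutting-time recursion, but you do not carry out the bookkeeping that actually yields this contradiction. In particular you do not specify which scales are being compared, nor do you show concretely how~\eqref{11} is invoked. If you wish to include a self-contained proof, you will need to fill in precisely that computation from~\cite{Bru99}; otherwise, citing the lemma as the paper does is entirely appropriate.
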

We will use the fact, shown for example in~\cite{Bru95} or~\cite[p.~1272]{BruKelsPi97}, that for $r \ge 0$ we have
  \begin{equation}
  \label{falta1}
  f_\parameter^{S_{r}}(c) \in [z_{Q(r+1)-1}, z_{Q(r+1)}]\cup [\hat{z}_{Q(r+1)},\hat{z}_{Q(r+1)-1}].
  \end{equation}
\begin{proof}[Proof of Lemma~\ref{l:injectivity criterion}]
Put $r = q(x)$ (resp. $r' = q(x')$), $x = (x_k)_{k \ge 0}$, and let~$t > r$ be the least integer~$q$ such that $x_q = 1$ (resp. $x_q' = 1$).
From the definition of~$\Omega_Q$ it follows that
$$ S_t < S_r + S_t < S_{t + 1} \text{ and that } Q(t + 1) > r. $$
In particular $S_r + S_t$ is not a cutting time, and~$r$ and~$t$ satisfy the hypothesis of Lemma~\ref{bruin}.

Assume without loss of generality that $r = q(x) >  q(x') = r'$, so that $Q(r' + 1) > Q(r + 1)$.
We will show that the intervals~$D_{S_{r'}}$ and~$D_{S_r + S_t}$ are disjoint, which implies that $\pi(x) \neq \pi(x')$.

For each $n \ge 1$ put $c_n = f_\parameter^n(c)$.

\partn{1}
We will show first that $c_{S_{r'}} \not \in D_{S_r + S_t}$.
Assume by contradiction that this is not the case.
Since $S_t < S_r + S_t < S_{t + 1}$, it follows that $D_{S_r + S_t} = [c_{S_r + S_t}, c_{S_r}]$, and that $c_{S_{r'}} \neq c_{S_r + S_t}$.
As $r' >  r$ we have $c_{S_{r'}} \neq c_{S_r}$, so~$c_{S_{r'}}$ must belong to the interior of~$D_{S_r + S_t}$.
By Lemma~\ref{bruin} with $k = r' \ge K$, to get a contradiction we just need to show that~$D_{S_r + S_t}$ intersects $\{ z_{Q(r' + 1) - 1}, \hat{z}_{Q(r' + 1) - 1} \}$.

By~\eqref{falta1} we have
\begin{equation}\label{e:outside}
c_{S_r} \not \in [z_{Q(r + 1)}, \hat{z}_{Q(r + 1)}].
\end{equation}
Using $Q(r' + 1) > Q(r + 1)$ and using~\eqref{falta1} again, but with~$r$ replaced by~$r'$, we obtain
\begin{equation}\label{e:inside}
c_{S_{r'}}
\in
(z_{Q(r'+1)-1}, \hat{z}_{Q(r'+1)-1})
\subset
(z_{Q(r + 1)}, \hat{z}_{Q(r + 1)}).
\end{equation}
Combined with~\eqref{e:outside} this implies that~$D_{S_r + S_t}$ intersects $\{ z_{Q(r' + 1) - 1}, \hat{z}_{Q(r' + 1) - 1} \}$.

\partn{2}
To complete the proof of the lemma, observe that first that~\eqref{e:inside} implies $D_{S_{r'}} \subset (z_{Q(r + 1)}, \hat{z}_{Q(r + 1)}).$
Therefore~\eqref{e:outside} implies that $D_{S_r + S_t}$ is not contained in~$D_{S_{r'}}$.
So to prove that~$D_{S_r + S_t}$ and~$D_{S_{r'}}$ are disjoint we just need to show that~$D_{S_r + S_t}$ is disjoint from $\partial D_{S_{r'}} = \{ c_{S_{r'}}, c \}$.
We showed $c_{S_{r'}} \not \in D_{S_r + S_t}$ in part~1, and $c \not \in D_{S_r + S_t}$ follows from the fact that $S_r + S_t$ is not a cutting time.
\end{proof}
\begin{lemma}\label{l:criterion satisfied}
Let~$Q : \NN \to \NN$ be a non\nobreakdash-decreasing and diverging kneading map, and let $(\Omega_Q, T_Q)$ be the corresponding generalized odometer.
Then for each constant $K > 0$, and for every pair of distinct points $x, x'$ in~$\Omega_Q$ that are not in the grand orbit of~$\expansion{0}$, there is an integer~$m$ satisfying
$$ \max \{ q(T_Q^m(x)), q(T_Q^m(x')) \} \ge K \text{ and } Q(q(T_Q^m(x)) + 1) \neq Q(q(T_Q^m(x')) + 1). $$
\end{lemma}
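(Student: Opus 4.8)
The plan is to argue by contradiction: I will deduce from the failure of the conclusion that $x$ and $x'$ have identical itineraries with respect to a cofinal family of cylinders surrounding the combinatorial origin $\expansion{0}$, and then contradict $x\neq x'$ using the combinatorial structure of $(\Omega_Q,T_Q)$.

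First, note that the condition on $m$ in the statement only weakens as $K$ grows (if it holds for some $K_0$, the same $m$ works for all $K\le K_0$), so it suffices to treat $K$ large. Fix such a $K$ and suppose for a contradiction that no $m$ works, i.e. that for every $m\in\ZZ$,
\[
\max\{q(T_Q^m x),q(T_Q^m x')\}\ge K
\ \Longrightarrow\
Q(q(T_Q^m x)+1)=Q(q(T_Q^m x')+1).
\]
Since $Q$ is non-decreasing and diverging, each level set of $Q$ is a finite interval and their left endpoints tend to $+\infty$. For $L\ge 0$ write $[0^L]=\{y\in\Omega_Q:q(y)\ge L\}$, a clopen cylinder, with $[0^{L+1}]\subseteq[0^L]$ and $\bigcap_{L}[0^L]=\{\expansion{0}\}$. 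I claim the hypothesis forces: for every $m\in\ZZ$ and every $L\ge K$ for which $L+1$ is the left endpoint of a level set of $Q$, one has $T_Q^m x\in[0^L]$ if and only if $T_Q^m x'\in[0^L]$. Indeed, if $q(T_Q^m x)\ge L$ then $q(T_Q^m x)\ge K$, so the hypothesis applies and $Q(q(T_Q^m x')+1)=Q(q(T_Q^m x)+1)\ge Q(L+1)$; since $L+1$ begins a level set, $\{j:Q(j)\ge Q(L+1)\}=[L+1,+\infty)$, hence $q(T_Q^m x')+1\ge L+1$; the reverse implication is symmetric. Let $\mathcal{L}$ denote this (infinite, cofinal) set of levels $L$.

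I would then extract the dynamical consequences. By minimality of $T_Q$, the orbit of $x$ visits every $[0^N]$, so along suitable times $m_i$ we have $q(T_Q^{m_i}x)\to+\infty$; by the equivalence above, also $q(T_Q^{m_i}x')\to+\infty$, so $T_Q^{m_i}x,\,T_Q^{m_i}x'\to\expansion{0}$. Consequently, if the orbit closure of $(x,x')$ in $\Omega_Q\times\Omega_Q$ contained a point $(z,z')$ with exactly one coordinate equal to $\expansion{0}$, say $z=\expansion{0}$ and $z'\neq\expansion{0}$, then choosing times with $(T_Q^{m_i}x,T_Q^{m_i}x')\to(z,z')$ we would get, for large $i$, that $q(T_Q^{m_i}x)\ge K$ is arbitrarily large whereas $q(T_Q^{m_i}x')=q(z')$ is a fixed finite integer, so $Q(q(T_Q^{m_i}x)+1)>Q(q(T_Q^{m_i}x')+1)$, contradicting the hypothesis. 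Thus the proof reduces to excluding the possibility that $z=\expansion{0}\Leftrightarrow z'=\expansion{0}$ for every $(z,z')$ in that orbit closure; equivalently — using that all negative iterates of $T_Q$ are defined on $\Omega_Q\setminus\mathcal{O}(\expansion{0})$ and that $x,x'$ avoid $\mathcal{O}(\expansion{0})$ — it reduces to showing that the coincidence $T_Q^m x\in[0^L]\Leftrightarrow T_Q^m x'\in[0^L]$ for all $m\in\ZZ$ and all $L\in\mathcal{L}$ is impossible when $x\neq x'$.

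This last step is the heart of the matter and the main obstacle. The coincidence just displayed means that $x$ and $x'$ have the same itinerary relative to the nested cylinders $([0^L])_{L\in\mathcal{L}}$; equivalently, for each $L\in\mathcal{L}$ they lie in the same atom of the Kakutani–Vershik (Kakutani–Rokhlin) partition of $(\Omega_Q,T_Q)$ with base $[0^L]$, i.e. determine the same initial path of the Bratteli–Vershik presentation of the system up to the corresponding level. Using the explicit recursive description of $\Omega_Q$, $T_Q$ and the cutting times $S_k$ from \cite{BruKelsPi97} — in particular that the partitions attached to the cofinal family $\mathcal{L}$ form a refining sequence and generate the topology of $\Omega_Q$ — one verifies that this common itinerary determines the point uniquely among the points of $\Omega_Q\setminus\mathcal{O}(\expansion{0})$, whence $x=x'$, a contradiction. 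The reason this cannot be obtained by a purely soft dynamical argument is that $T_Q$ is not a local map on $\Omega_Q$ (adding one can produce carries of unbounded length, even away from $\mathcal{O}(\expansion{0})$), so $x$ and $x'$ cannot be compared coordinate by coordinate, and one must exploit the combinatorics of the generalized odometer directly.
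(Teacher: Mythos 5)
Your reduction is fine as far as it goes: assuming no $m$ works, the monotonicity of $Q$ does give that for every $m$ and every $L\ge K$ at which a new level set of $Q$ begins, $T_Q^m x$ and $T_Q^m x'$ lie in the cylinder $[0^L]=\{y : q(y)\ge L\}$ simultaneously or not at all. But the entire lemma is then riding on your final claim -- that this coincidence of visit patterns, for all integer times and cofinally many $L$, forces $x=x'$ -- and that claim is only asserted (``one verifies''), not proved. It is not an off-the-shelf fact. The partitions that are known to generate the topology of the system are the Kakutani--Rokhlin/Bratteli--Vershik partitions $\P_j$, whose atoms are towers over \emph{several} bases (one for each vertex $v\in V_j$, with distinct heights $s_j(v)$); knowing at each time only whether the orbit lies in the single set $[0^L]$ does not obviously identify the tower and the height, and, as you yourself point out, carries in $T_Q$ are non-local, so one cannot read the digits of $x$ off its return times to $[0^L]$ coordinate by coordinate. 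Making this step rigorous would require exactly the kind of combinatorial work with the admissibility condition defining $\Omega_Q$ (if $x_k=1$ then $x_j=0$ for $Q(k+1)\le j\le k-1$) that your proposal never carries out; in effect you have restated the lemma in a form that is at least as hard as the original.

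For comparison, the paper's proof is short, constructive, and avoids any genericity or orbit-closure considerations. First, one translates forward so that $q$ of one point is $\ge K$, and then translates by $m'=-\sum_{\ell=0}^{k_0-1}x_\ell S_\ell$, where $k_0$ is the first coordinate at which $x$ and $x'$ differ: this erases the common low-order block, so the two images have distinct $q$-values, both comparisons staying above $K$. Second, if the values of $Q(q(\cdot)+1)$ still coincide, i.e.\ both $q$-values lie in one block $Q^{-1}(q_0)=\{k_1+1,\ldots,k_1'\}$, the admissibility condition forces the point $y$ with smaller $q(y)$ to have zero digits in positions $q(y)+1,\ldots,k_1'-1$; hence translating by $-S_{q(y)}$ pushes $q(y)$ beyond $k_1'$ (so its $Q$-value strictly exceeds $q_0$) while the other point's $q$-value drops below, so the $Q$-values separate. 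That explicit use of the structure of $\Omega_Q$ is precisely the ingredient missing from your argument, so as written the proposal has a genuine gap at its central step.
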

\begin{proof}
Let $K > 0$ be given.

\partn{1}
We will show that there is an integer~$m'$ such that
$$ \max \{ q(T_Q^{m'}(x)), q(T_Q^{m'}(x')) \}
\ge
K \text{ and } q(T_Q^{m'}(x)) \neq q(T_Q^{m'}(x')). $$
Let $m'' \ge 0$ be such that~$q(T_Q^{m''}(x)) \ge K$.
Replacing~$x$ and~$x'$ by~$T_Q^{m''}(x)$ and $T_Q^{m''}(x')$, respectively, if necessary, we assume that $\max \{ q(x), q(x') \} \ge K$. 

If $q(x) \neq q(x')$ then take $m' = 0$.
Suppose that $q(x) = q(x')$ and put $x = (x_k)_{k \ge 0}$, $x' = (x_k')_{k \ge 0}$.
As~$x$ and~$x'$ are different there is an integer $k \ge 0$ such that $x_{k} \neq x_k'$.
We denote by~$k_0$ the least integer with this property; we have $k_0 > q(x) = q(x')$.
Put $m' \= - \sum_{\ell = 0}^{k_0 - 1} x_\ell S_\ell = - \sum_{\ell = 0}^{k_0 - 1} x_\ell' S_\ell$.
Then it follows that $( \hat{x}_k )_{k \ge 0} \= T_Q^{m'}(x)$ (resp. $( \hat{x}_k' )_{k \ge 0} \= T_Q^{m'}(x')$) is such that for each $\ell \in \{0, \ldots, k_0 - 1 \}$ we have $\hat{x}_\ell = 0$ (resp. $\hat{x}_\ell' = 0$) and for every $\ell \ge k_0$ we have $\hat{x}_\ell = x_\ell$ (resp. $\hat{x}_\ell' = x_\ell'$).
Thus $q(T_Q^{m'}(x)) \neq q(T_Q^{m'}(x'))$ and
$$ \max \{ q(T_Q^{m'}(x)), q(T_Q^{m'}(x')) \}
\ge k_0 > q(x) = q(x') \ge K. $$

\partn{2}
Let $m'$ be the integer given by part~1, and put $y \= T_Q^{m'}(x)$ and $y' \= T_Q^{m'}(x')$.
Assume without loss of generality that $q(y) < q(y')$, so that $q(y') \ge K$.
If $Q(q(y) + 1) \neq Q(q(y') + 1)$ then take $m = m'$.
So we assume that $q_0 \= Q(q(y) + 1) = Q(q(y') + 1)$ and put $Q^{-1}(q_0) = \{ k_1 + 1, \ldots, k_1' \},$ so that $q(y), q(y') \in \{ k_1, \ldots, k_1' - 1\}$.
Note that $k_1' - 1 \ge q(y') \ge K$, so that $k_1' > K$.

Let us show that for each $k \in \{ q(y) + 1, \ldots, k_1' - 1\}$ we have $y_k = 0$.
In fact, suppose by contradiction that there is such~$k$ satisfying $y_k = 1$.
Then the chain of inequalities
$$ Q(k + 1) \le Q(k_1') = q_0 = Q(k_1 + 1) \le k_1 \le q(y), $$
and the definition of~$\Omega_Q$ imply that $y_{q(y)} = 0$.
This contradiction proves the claim.

Therefore $( \hat{y}_k )_{k \ge 0} \= T_Q^{-S_{q(y)}}(y)$ is such that for all $k \in \{ 0, \ldots, k_1' - 1 \}$ we have $\hat{y}_k = 0$.
Since~$y$ is not in the grand orbit of~$\expansion{0}$ this implies that $q(T_Q^{-S_{q(y)}}(y)) \ge k_1' > K$ and that $Q(q(T_Q^{-S_{q(y)}}(y)) + 1) > q_0$.
On the other hand, since $q(y) < q(y')$, we have $q(T_Q^{-S_{q(y)}}(y')) \le q(y') - 1$, so
$$ Q(q(T_Q^{-S_{q(y)}}(y')) + 1) \le Q(q(y')) \le Q(k_1') = q_0. $$
This shows that the integer $m = m' - S_{q(y)}$ satisfies the desired properties.
\end{proof}

\begin{proof}[Proof of Proposition~\ref{p:almost isomorphism}]
To prove that each point in~$X_{f_\parameter}$ that is not in the backward preimage of the critical point~$c$ of~$f_\parameter$ has a unique preimage by~$\pi$, observe first that Lemma~\ref{l:injectivity criterion} and Lemma~\ref{l:criterion satisfied} imply that~$\pi$ is injective on~$\Omega_Q \setminus \mathcal{O}(\expansion{0})$.
We will use the fact, shown in the proof of~\cite[Theorem~1]{BruKelsPi97}, that $\pi^{-1}(c) = \{ \expansion{0} \}$.
This implies that each point in the forward orbit of~$c$ has a unique preimage by~$\pi$, and that the preimage by~$\pi$ of the grand orbit of~$c$ is equal to the grand orbit of~$\expansion{0}$.
So the preimage by~$\pi$ of a point outside the grand orbit of~$c$ is contained in~$\Omega_Q \setminus \mathcal{O}(\expansion{0})$.
As~$\pi$ is injective on this set it follows that each point outside the grand orbit of~$c$ has at most one preimage by~$\pi$.

To prove the last assertion of the proposition we just need to show that an invariant measure of~$T_Q$ cannot charge~$\mathcal{O}(\expansion{0})$.
Since~$X_{f_\parameter}$ is a Cantor set and~$f_\parameter$ is minimal on~$X_{f_\parameter}$ (Proposition~\ref{p:continuous unimodal}), it follows that the forward orbit by~$f_{\parameter}$ of each point in~$X_{f_\parameter}$ is infinite.
Hence the forward orbit by~$T_Q$ of each point in~$\Omega_Q$ is infinite.
It follows that an invariant measure of~$T_Q$ cannot charge points, and in particular that such a measure cannot charge~$\mathcal{O}(\expansion{0})$.
\end{proof}

\section{The Bratteli-Vershik system associated to a kneading map}
\label{s:Bratteli-Vershik system}
The purpose of this section is to recall the definition of Bratteli-Vershik system associated to a kneading map, that was introduced by Bruin in~\cite{Bru03}.
After briefly recalling the concepts of Bratteli diagram (\S\ref{ss:diagram}) and Bratteli-Vershik system (\S\ref{ss:Bratteli-Vershik}), we give a representation of the corresponding invariant measures as an inverse limit of linear maps (\S\ref{ss:invariant measures}).
We define the Bratteli-Vershik system associated to a kneading map in~\S\ref{ss:kneading Bratteli-Vershik}.
See for example~\cite{DurHosSka99,HerPutSka92} and references therein for background and further properties of Bratteli-Vershik systems.
\subsection{Bratteli diagrams}\label{ss:diagram}
A \textit{Bratteli diagram} is an infinite directed graph $(V,E)$,
such that the vertex set~$V$ and the edge set~$E$ can be partitioned
into finite sets
$$
V = V_0\cup V_1 \cup  \cdots \mbox{ and } E=E_1\cup E_2\cup
\cdots
$$
with the following properties:
\begin{itemize}
\item
$V_0=\{v_0\}$ is a singleton.
\item
For every $j \ge 1$, each edge in~$E_j$ starts in a vertex in~$V_{j - 1}$ and arrives to a vertex in~$V_{j}$.
\item
All vertices in~$V$ have at least one edge starting from it, and all vertices except~$v_0$ have at least one edge arriving to it.
\end{itemize}
For a vertex $e \in E$ we will denote by~$s(e)$ the vertex where~$e$ starts and by~$r(e)$ the vertex to which~$e$ arrives.
A \textit{path} in $(V, E)$ is by definition a finite (resp. infinite) sequence $e_1e_2 \ldots e_j$ (resp. $e_1e_2 ...$) such that for each $\ell = 1, \ldots, j - 1$ (resp. $\ell = 1, \ldots$) we have $r(e_\ell) = s(e_{\ell + 1})$.
Note that for each vertex~$v$ distinct from $v_0$ there is at least one path starting at~$v_0$ and arriving to~$v$.

An \textit{ordered Bratteli diagram} $(V,E,\geq)$ is a Bratteli diagram $(V,E)$ together with a partial order~$\geq$ on~$E$, so that two edges are comparable if and only if they arrive at the same vertex.
For each $j \ge 1$ and $v \in V_j$ the partial order~$\ge$ induces an order on the set of paths from~$v_0$ to~$V$ as follows:
$$
e_1\cdots e_j > f_1\cdots f_j
$$
if and only there exists $j_0 \in \{1,\cdots, j \}$ such that $e_{j_0} >
f_{j_0}$ and such that for each $\ell \in \{ j_0 + 1, \ldots, j \}$ we have $e_\ell = f_\ell$.

We will say that an edge~$e$ is \textit{maximal} (resp. \textit{minimal}) if it is maximal (resp. minimal) with respect to the order~$\ge$ on the set of all edges in~$E$ arriving at~$r(e)$.
Note that for each vertex~$v$ distinct from~$v_0$ there is precisely one path starting at~$v_0$ and arriving to~$v$ that is maximal (resp. minimal) with respect to the order~$\ge$.
It is characterized as the unique path starting at~$v_0$ and arriving at~$v$ consisting of maximal (resp. minimal) edges.

\subsection{Bratteli-Vershik system}\label{ss:Bratteli-Vershik}
Fix an ordered Bratteli diagram $B \=(V,E,\geq)$.
We denote by~$X_B$ set of all infinite paths in~$B$ starting at~$v_0$.
For a finite path $e_1 \ldots e_j$ starting at~$v_0$ we denote by $U(e_1 \ldots e_j)$ the subset of~$X_B$ of all infinite paths $e_1'e_2' \ldots$ such that for all $\ell = 1, \ldots, j$ we have $e_\ell' = e_\ell$.
We endow~$X_B$ with the topology generated by the sets $U(e_1 \ldots e_j)$.
Then each of this sets is clopen, so $X_B$ becomes a compact Hausdorff space with a countable basis of clopen sets.

We will denote by~$X_B^{\max}$ (resp. $X_B^{\min}$) the set of all elements $(e_j)_{j \ge 1}$ of~$X_B$ so that for each $j \ge 1$ the edge~$e_j$ is a maximal (resp. minimal).
It is easy to see that each of these sets is non-empty.

From now on we assume that the set~$X_B^{\min}$ is reduced to a unique point, that we will denote by~$x_{\min}$.
We will then define the transformation $V_B:X_B\to X_B$ as follows:
\begin{itemize}
\item
$V_B^{-1}(x_{\min}) = X_{\max}$.
\item
Given $x \in X_B \setminus X_{\max}$, let $j \ge 1$ be the smallest integer such that~$e_j$ is not maximal.
Then we denote by~$f_j$ the successor of~$e_j$ and by $f_1\ldots f_{j - 1}$ the unique minimal path starting at~$v_0$ and arriving to~$s(f_k)$.
Then we put,
$$V_B(x)=f_1\cdots f_{k-1}f_ke_{k+1}e_{k+2}\ldots \ .$$
\end{itemize}
The map~$V_B$ is continuous, onto and invertible except at~$x_{\min}$.
\subsection{Transition matrices and invariant measures}\label{ss:invariant measures}
We fix an ordered Bratteli diagram $B \= (V, E, \ge)$ having a unique minimal infinite path, and consider the map $V_B : X_B \to X_B$ defined in the previous subsection.

For $j \ge 1$ and $v \in V_j$ we denote by~$s_j(v) > 0$ the number of paths starting at~$v_0$ and arriving to~$v$, and put $\vec{s}_j \= ( s_j(v) )_{v \in V_j} \in \RR^{V_j}$.
Let $N_j \in \M_{V_{j - 1}, V_j}$ be the matrix such that for each $v \in V_{j - 1}$ and $v' \in V_{j}$ the entry $N_j(v, v')$ is equal to the number of edges starting at~$v$ and arriving to~$v'$.
Observe that $N_j^t \vec{s}_{j - 1} = \vec{s}_{j}$, so if we put $B_0 = \{ 1 \} \in \M_{V_0, V_0}$ and for each $j \ge 1$ we denote by $B_j \in \M_{V_j, V_j}$ the diagonal matrix defined by $B_j(v, v) = s_j(v)$, then the matrix
$$ M_j \= B_{j - 1} N_j B_j^{-1} \in \M_{V_{j - 1}, V_{j}}, $$
is stochastic.  

Given $j \ge 1$ and $v \in V_j$, denote by~$\underline{e}_v$ be the maximal path starting at~$v_0$ and arriving to~$v$.
Then we put $C_v \= U(\underline{e}_v)$, which is a clopen subset of~$X_B$.
If for a given $s \in \{ 0, \ldots,  s_j(v) - 1 \}$ we denote by $f_1\cdots f_{j-1}$ the $(s_j(v) - s)$\nobreakdash-th path from~$v_0$ to~$v$, then we have $V_B^{-s}(C_v) = U(f_1,\cdots, f_{j-1})$.
It thus follows that
\begin{eqnarray*}
\P_j
& \= &
\{V_B^{-s}(C_v)\mid s \in \{0, \ldots, s_j(k) - 1 \}, k \in V_j \}
\\ & = &
\{ U(e_1 \ldots e_{j - 1}) \mid e_1 \ldots e_{j - 1} \text{ path starting at } v_0 \}.
\end{eqnarray*}
is a partition of~$X_B$ into clopen sets.
Note also that the partition~$\P_{j+1}$ is finer than~$\P_j$.
It follows that each invariant measure~$\mu$ of $(X_B,V_B)$ is determined by its values on the sets belonging to the partitions~$\P_j$.

We will need the following fact for the proof of Lemma~\ref{l:medidas Bratteli} below: If $(X_B, V_B)$ has no periodic points, then
 \begin{equation}
 \label{alturas}
\lim_{n\to + \infty}\min\{s_n(v): v\in V_n\} = + \infty.
\end{equation}
Indeed, if~\eqref{alturas} does not hold, then there exist $N \in \NN$ and a strictly increasing sequence of positive integers $(m_n)_{n \ge 1}$ such that for each $n \ge 1$,
$$
\min\{s_{m_n}(v): v\in V_n\} = N.
$$
By compactness this implies there exists a sequence $(x_k)_{k\geq 1}$ in~$X_B$ converging to~$x_{\min}$, such that if for each $k \ge 1$ there is $n_k \ge 1$ and $v_k \in V_{n_k}$ such that $s_{m_{n_k}}(v_k) = N$ and $x_k \in V_B^{-(N-1)}(C_{v_{k}})$.
It then follows that
$V_B^{N-1}(x_{\min})\in X_{\max}$ and $V_B^N(x_{\min})=x_{\min}$.

The following result is well-known, but we were not able to find an exact reference.
We include a proof for completeness.
Analogous results can be found for example in~\cite[Proposition 3.2]{HerPutSka92} and~\cite[Lemma~3.1 and Proposition~3.2]{GamMar06}.
Recall that for a finite set $V$ we denote by~$\Delta_V$ the unit simplex in~$\RR^V$.
\begin{lemma}\label{l:medidas Bratteli}
Denote by $\M(X_B, V_B)$ the space of signed measures on~$X_B$ that are invariant by~$V_B$, endowed with the weak$^*$ topology.
Furthermore, denote by $\M_1(X_B, V_B)$ the set of probability measures in $\M(X_B, V_B)$.
If $(X_B, V_B)$ has no periodic points, then the linear map~$H : \M(X_B, V_B) \to \varprojlim_{j} (\RR^{V_j}, M_j)$, defined by
$$ \mu \mapsto \{ (s_j(v)\mu(C_v))_{v \in V_j} \}_{j\geq 1}, $$
is a homeomorphism and
$$ H (\M_1(X_B, V_B)) = \varprojlim_{j} (\Delta_{V_j}, M_j). $$
\end{lemma}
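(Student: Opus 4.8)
The plan is to read $H$ off from the tower (Kakutani--Rokhlin) structure of $(X_B,V_B)$, reduce the essential content to the case of probability measures, where weak$^*$-compactness is available, and then extend by linearity.

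\emph{Well-definedness, injectivity, continuity.} Fix $j\ge 1$ and $v\in V_{j-1}$. Decomposing $C_v$ according to the next edge, $C_v$ is the disjoint union, over the edges $e\in E_j$ with $s(e)=v$, of cylinders, each of which is of the form $V_B^{-s}(C_{r(e)})$ for a suitable $s$; this is exactly the description of the partition $\P_j$ recalled in \S\ref{ss:invariant measures}. Hence, by invariance of $\mu$, we get $\mu(C_v)=\sum_{e:\,s(e)=v}\mu(C_{r(e)})=\sum_{v'\in V_j}N_j(v,v')\,\mu(C_{v'})$; multiplying by $s_{j-1}(v)$ and rewriting with $M_j=B_{j-1}N_jB_j^{-1}$ gives $M_j\vec v_j=\vec v_{j-1}$, so $H(\mu)\in\varprojlim_j(\RR^{V_j},M_j)$. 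The map $H$ is injective since, again by invariance, the numbers $\mu(C_v)$ determine $\mu$ on each $\P_j$, and $\bigcup_j\P_j$ generates the Borel $\sigma$-algebra; and $H$ is continuous because each coordinate $\mu\mapsto\mu(C_v)=\int\uno_{C_v}\,d\mu$ is weak$^*$-continuous ($C_v$ being clopen) and the inverse limit carries the product topology.

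\emph{The probability case.} I claim $H(\M_1(X_B,V_B))=\varprojlim_j(\Delta_{V_j},M_j)$. If $\mu\ge 0$ and $\mu(X_B)=1$ then $\vec v_j\ge 0$ and $\|\vec v_j\|_1=\sum_{v\in V_j}s_j(v)\mu(C_v)=\mu(X_B)=1$ (the translates $V_B^{-s}(C_v)$, $0\le s<s_j(v)$, $v\in V_j$, partition $X_B$, each of $\mu$-measure $\mu(C_v)$), so $\vec v_j\in\Delta_{V_j}$. Conversely, given a compatible sequence $(\vec v_j)$ of probability vectors, I would define a set function on the algebra of cylinders by giving $V_B^{-s}(C_v)$ (with $v\in V_j$) the value $\vec v_j(v)/s_j(v)$; the relations $M_j\vec v_j=\vec v_{j-1}$ are precisely what makes this consistent when a cylinder is subdivided at the next level, so it is finitely additive. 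Since $X_B$ is compact with a countable basis of clopen sets, finite additivity on the clopen algebra is automatically countable additivity, so Carath\'eodory's theorem produces a Borel probability measure $\mu\ge 0$ with $s_j(v)\mu(C_v)=\vec v_j(v)$. It remains to check $\mu$ is $V_B$-invariant, which one does on cylinders: $\mu(V_B^{-1}A)=\mu(A)$ is transparent for $A=V_B^{-s}(C_v)$ with $s\ge 1$, and for $A$ whose defining path is not minimal (then $V_B^{-1}A$ is the cylinder of the predecessor path, of equal value), and needs a short argument at the minimal path, where one uses that $(X_B,V_B)$ has no periodic points — so \eqref{alturas} holds and no invariant measure charges $\{x_{\min}\}$, whence $\mu(X_{\max})=\mu(\{x_{\min}\})=0$. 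This invariance check at the orbit of $x_{\min}$ is the step I expect to be the main obstacle.

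\emph{Conclusion.} The set $\M_1(X_B,V_B)$ is a weak$^*$-closed subset of the (Banach--Alaoglu) compact set of probability measures, hence weak$^*$-compact, while $\varprojlim_j(\Delta_{V_j},M_j)$ is compact Hausdorff; since $H|_{\M_1}$ is a continuous affine bijection between them, it is an affine homeomorphism. For the full statement, note that every $\mu\in\M(X_B,V_B)$ is a difference of two invariant positive measures: take weak$^*$ limit points of the Ces\`aro averages $\frac1N\sum_{k=0}^{N-1}(V_B^k)_*\mu^+$ and $\frac1N\sum_{k=0}^{N-1}(V_B^k)_*\mu^-$ of the Jordan parts of $\mu$ — these exist by weak$^*$-compactness, are invariant and nonnegative, and their difference equals $\lim_N\frac1N\sum_k(V_B^k)_*\mu=\mu$. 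Hence $\M(X_B,V_B)=\operatorname{span}\M_1(X_B,V_B)$, the map $H$ is linear, and $H$ carries $\M(X_B,V_B)$ bijectively onto the corresponding span of $\varprojlim_j(\Delta_{V_j},M_j)$ inside $\varprojlim_j(\RR^{V_j},M_j)$; together with the homeomorphism on $\M_1$ this yields the lemma.
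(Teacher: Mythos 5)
Your outline does follow the paper's route (tower partition $\P_j$, the values $\mu(C_v)$ determine $\mu$, construction of a measure from compatible data, invariance via the no-periodic-points hypothesis), and the parts on well-definedness, injectivity, continuity of $H$ and the identity $\sum_{v\in V_j}s_j(v)\mu(C_v)=1$ coincide with the paper's computations. But the step you yourself flag as ``the main obstacle'' is precisely where the paper's proof does its real work, and your proposed shortcut does not close it. For the cylinder $P=V_B^{-(s_j(v)-1)}(C_v)$ the set $V_B^{-1}(P)$ is not an element of $\P_j$, and the justification you offer --- ``no invariant measure charges $\{x_{\min}\}$, whence $\mu(X_{\max})=\mu(\{x_{\min}\})=0$'' --- is circular: at that stage the constructed $\mu$ is not yet known to be invariant, and the identity $\mu(X_{\max})=\mu(\{x_{\min}\})$ itself uses invariance through $V_B^{-1}(x_{\min})=X_{\max}$. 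What is actually needed is the quantitative argument the paper carries out: refine to a level $m$ with $\min_{v'\in V_m}s_m(v')>s_j(v)$ (possible by \eqref{alturas}), decompose $C_v$ into slices of level-$m$ towers, and bound
$$ \left| \mu \left( V_B^{-s_j(v)}(C_v) \right) - \mu(C_v) \right| \le \frac{2 s_j(v)\mu(X_B)}{\min\{s_m(v') \mid v'\in V_m\}}, $$
letting $m\to+\infty$. (One can indeed get $\mu(X_{\max})=0$ without invariance, since $X_{\max}\subset\bigcup_{v\in V_m}C_v$ and $\sum_{v\in V_m}\mu(C_v)\le 1/\min_{v'}s_m(v')$, but the estimate above still has to be carried out; it is not a one-line consequence.)

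The second gap is that your passage from $\M_1(X_B,V_B)$ to $\M(X_B,V_B)$ proves less than the lemma states. The Ces\`aro/Jordan argument correctly gives $\M(X_B,V_B)=\operatorname{span}\M_1(X_B,V_B)$, so you conclude that $H$ maps $\M(X_B,V_B)$ bijectively onto the span of $\varprojlim_{j}(\Delta_{V_j},M_j)$ inside $\varprojlim_{j}(\RR^{V_j},M_j)$; the lemma, however, asserts that $H$ is onto all of $\varprojlim_{j}(\RR^{V_j},M_j)$, and you never show that this span exhausts the inverse limit. This is not automatic: for a compatible sequence $(\vec{y}_j)_j$ of signed vectors under the stochastic maps $M_j$ the norms $\|\vec{y}_j\|_1$ are only non-decreasing, and boundedness of this sequence is exactly what membership in the image of a finite signed measure requires, so surjectivity needs an argument. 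The paper handles this by running the Carath\'eodory-type construction directly for an arbitrary compatible family $\{(x_v)_{v\in V_j}\}_{j\ge 1}$ in $\varprojlim_{j}(\RR^{V_j},N_j)$ (not only for probability vectors), producing a signed measure with $\mu(V_B^{-s}(C_v))=x_v$, and likewise deduces continuity of $H^{-1}$ on the whole space from the fact that the elements of each $\P_j$ are clopen --- whereas your compactness argument gives a homeomorphism only on $\M_1$. As written, then, your proposal establishes the probability-measure half of the statement modulo the invariance estimate, but not the full homeomorphism claim.
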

\begin{proof}
By definition of the matrices~$M_j$, the inverse limit $\varprojlim_{j}(\RR^{V_j}, M_j)$ is isomorphic to $\varprojlim_{j} (\RR^{V_j}, N_j)$.
It is thus enough to prove the lemma with~$M_j$ replaced by~$N_j$, with $H$ replaced by the map $\widetilde{H} : \M(X_B, V_B) \to \varprojlim_{j} (\RR^{V_j}, N_j)$ defined by
$$ \mu \mapsto \{ (\mu(C_v))_{v \in V_j} \}_{j\geq 1}, $$
and with $\Delta_{V_j}$ replaced by
$$ \widetilde{\Delta}_j \= B_j^{-1} (\Delta_{V_j}) = \left\{ (x_v)_{v \in V_j} \in \RR^{V_j} \mid \sum_{v \in V_j} s_j(v) x_v = 1 \right\}. $$

Keeping the notation above the statement of the lemma, we have for each $j \ge 1$ and $v \in V_j,$
\begin{eqnarray}
\label{e:conjunto}
 C_v & = & \bigcup_{\footnotesize \begin{array}{c}
           e\in E_j \\
           s(e)=v
         \end{array}} U(\underline{e}_v e)\\
        & =  &
\bigcup_{v' \in V_{j + 1}} \bigcup_{\footnotesize \begin{array}{c}
           e \in E_j \\
           s(e)=v \\
           r(e)=v'
         \end{array}} U(\underline{e}_v e)
\end{eqnarray}
Observe furthermore that for each $v' \in V_{j + 1}$ and each $e \in E_j$ such that $s(e) = v$ and $r(e) = v'$, there is $s \in \{ 0, \ldots, s_{j + 1}(v') - 1 \}$ such that $U(\underline{e}_v e) = V_B^{-s}(C_{v'})$.

It thus follows that for each $\{ (x_v)_{v \in V_j} \}_{j \geq 1}$ in
$\varprojlim_{j}(\RR^{V_j}, N_j)$ there is a signed measure $\mu$ on~$X_B$ such that for each $j \ge 1$, $v \in V_j$ and $s \in \{ 0, \ldots, s_j(v) - 1 \}$ we have
$$
\mu(V_B^{-s}(C_v))=x_v.
$$
Since $(\P_j)_{j \ge 1}$ spans the topology of~$X_B$ this measure
is unique. We will now prove that this measure is invariant
by~$V_B$. It is enough to show that for each $j \ge 1$ and each $P
\in \P_j$ we have $\mu(V_B^{-1}(P)) = \mu(P)$. By definition of
$\P_j$, for each $P \in \P_j$ there is $v \in V_j$ and $s \in \{
0, \ldots, s_j(v) - 1 \}$ such that $P = V_B^{-s}(C_v)$. If $s <
s_j(v) - 1$ then we have $\mu(V_B^{-1}(P)) = \mu(V_B^{- (s +
1)}(C_v)) = x_v = \mu(V_B^{- s}(C_v)).$
Suppose now that $s = s_j(v)-1$.
Since~\eqref{alturas} holds, we can take $m>j$ such that for each $v' \in V_m$ we have $s_m(v') > s_j(v)$.
After some computations, and using the equality,
$$ C_v = \bigcup_{v'\in
V_m}\bigcup_{\footnotesize
\begin{array}{c}
           r = 0, \ldots, s_m(v') - 1,\\
           V_B^{-r}(C_{v'})\subseteq C_v
         \end{array}}V_B^{-r}(C_{v'}),$$
we get
\begin{multline*}
\left| \mu \left( V_B^{-s_j(v)}(C_v) \right) - \mu(C_v) \right|
\le
\mu \left( \bigcup_{v'\in V_m}\bigcup_{r=s_m(v')-s_j(v)}^{s_m(v')-1}V_B^{-r}(C_v') \right)
+ \\ +
\mu \left( \bigcup_{v'\in V_m}\bigcup_{r=0}^{s_j(v)-1}V_B^{-(s_m(v)+r)}(C_v') \right).
\end{multline*}
Since for each $r = 0, \ldots, s_j(v)$ we have $V_B^{-(s_m(v')+r)}(C_{v'})\subseteq \bigcup_{v''\in V_m}V_B^{-r}(C_{v''})$, we obtain
$$
\left| \mu \left( V_B^{-s_j(v)}(C_v) \right) - \mu(C_v) \right|
\le
2s_j(v)\sum_{v'\in V_m}\mu(C_{v'})
\le
\frac{2s_j(v)\mu(X_B)}{\min\{s_m(v'): v'\in V_m\}}.
$$
From (\ref{alturas}) we deduce that
$\mu(V_B^{-s_j(v)}(C_v))=\mu(C_v).$

We have thus shown that~$\widetilde{H}$ has an inverse that is defined on $\varprojlim_{j}(\RR^{V_j}, N_j)$ and that takes images in~$\M(X_B, V_B)$.
As for each $j \ge 1$ all of the elements of $\P_j$ are clopen, it follows that both,~$\widetilde{H}$ and its inverse are continuous.

To complete the proof of the lemma it remains to show that $\widetilde{H}(\M_1(X_B, V_B)) = \varprojlim_{j} (\widetilde{\Delta}_j, N_j)$.
The inclusion $\widetilde{H}^{-1}(\varprojlim_{j} (\widetilde{\Delta}_{j}, N_j)) \subset \M_1(X_B, V_B)$ is easily seen to hold.
By~\eqref{e:conjunto} and by the remark after it, we have for each $\mu \in \M_1(X_B, V_B)$, $j \ge 1$ and $v \in V_j$,
\begin{multline}
\label{m1}
\mu(C_v) =
\sum_{v' \in V_{j + 1}} \sum_{\footnotesize
\begin{array}{c}
           e\in E_j \\
           s(e)=v\\
           r(e)=v'
         \end{array}}\mu(C_{v'})
= \sum_{v' \in V_{j + 1}} N_{j + 1}(v, v') \mu(C_{v'}).
\end{multline}
Furthermore, since $\mu(X_B)=1$ and $\mu$ is invariant, it holds
\begin{equation}
\label{m2} \sum_{v \in V_j} s_j(v)\mu(C_v)=1.
\end{equation}
Equations (\ref{m1}) and (\ref{m2}) imply that $\widetilde{H}(\mu) \in \varprojlim_{j}(\widetilde{\Delta}_j, N_j).$
\end{proof}
\subsection{The Bratteli-Vershik system associated to a kneading map}\label{ss:kneading Bratteli-Vershik}
Given a kneading map~$Q$ we will now define an ordered Bratteli diagram $B_Q \= (V, E, \le)$ that was introduced by Bruin in~\cite[\S4]{Bru03}.

We start defining the Bratteli diagram $(V, E)$:
\begin{itemize}
\item
$V_0 = \{ 0 \}$, $V_1 = \{ k \in \NN \mid k \ge 1, Q(k) = 0 \}$, and for $j \ge 2$,
$$ V_j \= \{ k \in \NN \mid k \ge j, Q(k-1) \le j - 2 \}. $$
\item
$E_1 = \{ (0 \to k) \mid k \in V_1 \}$ and for $j \ge 2$
\begin{multline*}
E_j = \{ j - 1 \to j \} \cup \{ j - 1 \to k \mid k \in V_j \setminus V_{j - 1} \}
\cup
\{ k \to k \mid k \in V_j \cap V_{j - 1} \}.
\end{multline*}
\end{itemize}
Note that for every $j \ge 2$, each vertex in~$V_j$ different from~$j$ has at most one edge arriving at it.
Besides $\{j - 1 \to j \} \in E_j$, the only edge that can arrive to $j \in V_j$ is $\{j \to j \} \in E_j$, that only exists when $j \in V_{j - 1}$.

So to define the partial order~$\ge$, we just have to define it, for each $j \ge 2$, between $\{j - 1 \to j \} \in E_{j - 1}$ and $\{ j \to j \} \in E_{j - 1}$ when both exist: we put $\{ j - 1 \to j \} < \{ j \to j \}$.
The rest of the edges are maximal and minimal at the same time.

Note that for $k \ge 1$ the set~$V_k$ is reduced to a point if and only if~$Q(k) = k - 1$.
So, if for each large $k \ge 1$ we have $Q(k) = k - 1$, then the set~$X_{B_Q}$ is finite.
Otherwise, it follows that the set~$X_{B_Q}$ is a Cantor set.

It is straight forward to check that the infinite path $0 \to 1 \to 2 \to \cdots $ is the unique minimal path in~$B_Q$.
Therefore there is a well defined map $V_{B_Q} : X_{B_Q} \to X_{B_Q}$, see~\S\ref{ss:Bratteli-Vershik}.
The following is \cite[Proposition~2]{Bru03}, and the last statement follows from \cite[Lemma~2]{BruKelsPi97}.
\begin{theorem}[\cite{Bru03}, Proposition~2]\label{t:reduction to Bratteli}
Let~$Q$ be a diverging kneading map, and consider the corresponding Bratteli-Vershik system $(X_{B_Q}, V_{B_Q})$ and generalized odometer $(\Omega_Q, T_Q)$.
Then there is a homeomorphism between~$X_{B_Q}$ and~$\Omega_Q$ that conjugates the action of~$V_{B_Q}$ on $X_{B_Q}$ to the action of~$T_Q$ on~$\Omega_Q$.
In particular $(X_{B_Q}, V_{B_Q})$ is minimal.
\end{theorem}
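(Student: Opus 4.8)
The plan is to construct an explicit homeomorphism $\Phi : X_{B_Q} \to \Omega_Q$ conjugating $V_{B_Q}$ to $T_Q$; minimality of $(X_{B_Q}, V_{B_Q})$ will then follow from that of $(\Omega_Q, T_Q)$, which was recalled in \S\ref{ss:kneading odometer} (\cite[Lemma~2]{BruKelsPi97}).

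The first step is to understand the combinatorics of an infinite path in $B_Q$. Writing $w_j \= r(e_j) \in V_j$ for the vertices visited by a path $e_1 e_2 \cdots \in X_{B_Q}$, one reads off from the definitions in \S\ref{ss:kneading Bratteli-Vershik} that $w_j \ge j$, that $(w_j)_{j \ge 0}$ is non-decreasing, and that, passing from level $j - 1$ to level $j$: if $w_{j - 1} \ge j$ then the only available edge out of $w_{j-1}$ is the loop $\{ w_{j - 1} \to w_{j - 1} \}$, so $w_j = w_{j - 1}$; while if $w_{j - 1} = j - 1$ (the ``spine'' vertex at that level), then $e_j$ is one of the edges leaving $j - 1$, namely the spine edge $\{ j - 1 \to j \}$ or an edge $\{ j - 1 \to m \}$ with $m \in V_j \setminus V_{j - 1}$. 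Since a vertex $m > j$ created at level $j$ disappears from $V_{j'}$ once $j' > m$, each such excursion off the spine is finitely lived and eventually rejoins the spine; thus an infinite path is the spine $0 \to 1 \to 2 \to \cdots$ modified by a constraint-restricted family of excursions, and $x_{\min}$ is the all-spine path, consistent with \S\ref{ss:kneading Bratteli-Vershik}. The partial order is non-trivial only between the spine edge $\{ j - 1 \to j \}$ and the loop $\{ j \to j \}$ (when the latter exists), with $\{ j - 1 \to j \} < \{ j \to j \}$.

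Recording, for $k \ge 0$, the bit $x_k = 1$ precisely when the path has an excursion leaving the spine vertex $k$, one obtains a map $\Phi : X_{B_Q} \to \{ 0,1 \}^\NN$. The crux is then a combinatorial lemma in two parts. First, $\Phi$ lands in $\Omega_Q$ and is a homeomorphism onto it: the admissibility condition on the bits is exactly the statement that the edge sets $E_j$ allow two excursions to coexist only subject to the gap prescribed by $Q$, which one checks against the defining condition $Q(k - 1) \le j - 2$ for membership in $V_j$; continuity of $\Phi$ and of its inverse is clear since both are defined cylinder-by-cylinder, and a continuous bijection between the two compact Hausdorff Cantor sets is a homeomorphism, with $\Phi(x_{\min}) = \expansion{0}$. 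Second, $\Phi$ conjugates $V_{B_Q}$ to $T_Q$: applying $V_{B_Q}$ replaces a finite path by its successor in the Vershik order and resets its initial segment to the minimal one, and — using that the number of finite paths from $v_0$ to a vertex $v \in V_j$ equals $S_k$ for the appropriate index $k = k(j, v)$, which one proves by induction on $j$ from the identity $N_j^t \vec{s}_{j - 1} = \vec{s}_j$ of \S\ref{ss:invariant measures}, mirroring the recursion $S_k = S_{k - 1} + S_{Q(k)}$ — this is exactly ``add one'' in the numeration system $(S_k)_{k \ge 0}$, i.e. the action of $T_Q$ on the dense orbit of $\expansion{0}$, which then propagates to all of $\Omega_Q$ by continuity.

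The step I expect to be the main obstacle is this combinatorial lemma, i.e. verifying that the three ingredients of $B_Q$ — the vertex sets $V_j$, the edge sets $E_j$, and the partial order — are arranged so that (a) the path counts obey the cutting-time recursion, and (b) the Vershik order on finite paths matches, level by level, the lexicographic order underlying the definition of $\expansion{n}$ and of $T_Q$ in \S\ref{ss:kneading odometer}. This is precisely the bookkeeping that translates the membership condition $Q(k - 1) \le j - 2$ into the gap condition defining $\Omega_Q$. Once this dictionary is in place, all the remaining points — continuity, bijectivity, the intertwining relation, and hence minimality of $(X_{B_Q}, V_{B_Q})$ — follow formally.
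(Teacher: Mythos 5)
First, a remark on the comparison: the paper does not actually prove this statement --- it is quoted as \cite[Proposition~2]{Bru03}, with the minimality assertion deduced from minimality of $(\Omega_Q, T_Q)$ via \cite[Lemma~2]{BruKelsPi97}. So your proposal must be judged on its own merits; its overall plan (code infinite paths of $B_Q$ by $0$-$1$ sequences, show the coding is a homeomorphism onto $\Omega_Q$, show the Vershik successor is ``add one'' in the numeration system $(S_k)_{k \ge 0}$, transport minimality) is indeed the natural one.

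However, the explicit coding you propose is wrong, and this is a genuine gap rather than a detail. You set $x_k = 1$ when the path has an excursion \emph{leaving} the spine vertex $k$. The coding that works sets $x_k = 1$ when the path contains the excursion \emph{arriving} at the spine vertex $k + 2$, i.e.\ the branch edge from the spine vertex $Q(k+1) + 1$ at level $Q(k+1)+1$ to the vertex $k+2$, followed by the forced loops up to level $k+2$. It is this convention that makes the decomposition of paths ending at the spine vertex $j$ mirror $S_{j - 1} = S_{j - 2} + S_{Q(j - 1)}$, and that makes coexistence of excursions coded at $k' < k$ equivalent to $k' < Q(k+1)$, which is exactly the defining condition of $\Omega_Q$. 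Your convention coincides with this one only when $Q(m - 1) = m - 3$ (e.g.\ the Fibonacci map); in general your map $\Phi$ is not even injective: all vertices $m$ with $Q(m - 1) = k - 1$ are reached by excursions branching from the \emph{same} spine vertex $k$, and your coding records only the bit $x_k = 1$, forgetting which $m$ was chosen. This is precisely the situation for the resonant kneading maps central to this paper, where $Q^{-1}(q_r)$ is a long block and many excursions branch from the spine vertex $q_r + 1$. Moreover, under your convention the constraint imposed by an excursion bears on the indices \emph{above} the branching vertex (up to the rejoining level $m$, which is not recorded), so it cannot coincide with the $\Omega_Q$ condition, which constrains the indices $Q(k+1), \ldots, k - 1$ \emph{below} $k$. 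Since you yourself flag this combinatorial dictionary as the crux and only describe it rather than prove it, the proposal as written does not establish the theorem; it would be repaired by using the coding above and then verifying, via the path counts $s_j(j) = S_{j - 1}$ and $s_j(m) = S_{Q(m - 1)}$ for $m > j$, that the Vershik successor corresponds to addition of $1$ on $\{ \expansion{n} \mid n \in \NN \}$ and hence, by continuity and compactness, to $T_Q$ on all of $\Omega_Q$.
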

We will also need the following simple lemma.
\begin{lemma}\label{l:basic Bratteli-Vershik}
Let~$Q$ be a non-decreasing and diverging kneading map and let~$\widetilde{Q}$ be a kneading map such that for each sufficiently large~$k \ge 0$ we have $\widetilde{Q} (k) = Q(k)$.
Then the space of invariant probability measures of~$(X_{B_{\widetilde{Q}}}, V_{B_{\widetilde{Q}}})$ is affine homeomorphic to that of~$(X_{B_Q}, V_{B_Q})$.
\end{lemma}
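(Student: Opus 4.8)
The plan is to reduce the statement, by means of Lemma~\ref{l:medidas Bratteli}, to a comparison of two inverse limits of simplices, and then to recognise the two spaces of invariant measures as compact bases of one and the same cone. First a degenerate case: since $\widetilde{Q}$ agrees with $Q$ for all large~$k$ and $Q$ diverges, $\widetilde{Q}$ diverges too, and $Q(k) = k - 1$ for all large~$k$ precisely when the same holds for $\widetilde{Q}$; in that case $X_{B_Q}$ and $X_{B_{\widetilde{Q}}}$ are both finite (\S\ref{ss:kneading Bratteli-Vershik}), hence each system is a single periodic orbit by Theorem~\ref{t:reduction to Bratteli}, both are uniquely ergodic, and there is nothing to prove. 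Assume from now on that this does not happen; then $X_{B_Q}$ and $X_{B_{\widetilde{Q}}}$ are Cantor sets (\S\ref{ss:kneading Bratteli-Vershik}) on which $V_{B_Q}$, resp.\ $V_{B_{\widetilde{Q}}}$, acts minimally (Theorem~\ref{t:reduction to Bratteli}), so, being infinite and minimal, neither system has periodic points and Lemma~\ref{l:medidas Bratteli} applies to both.

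Write $V_j^Q, N_j^Q, M_j^Q$ and $V_j^{\widetilde{Q}}, N_j^{\widetilde{Q}}, M_j^{\widetilde{Q}}$ for the vertex sets, incidence matrices and transition matrices of the ordered Bratteli diagrams $B_Q$ and $B_{\widetilde{Q}}$. From the description of $V_j$ and $E_j$ in~\S\ref{ss:kneading Bratteli-Vershik} there is $j_0 \ge 1$ with $V_j^Q = V_j^{\widetilde{Q}} =: V_j$ for every $j \ge j_0$ and $N_j^Q = N_j^{\widetilde{Q}} =: N_j$ for every $j \ge j_0 + 1$; note, however, that the transition matrices $M_j = B_{j - 1} N_j B_j^{-1}$ need not agree, since the path counts $s_j(v)$ differ. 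Since an element of an inverse limit is determined by its coordinates at all sufficiently large stages, and conversely any compatible family of such coordinates extends uniquely (all bonding maps here being everywhere defined and continuous), the restriction maps give affine homeomorphisms $\varprojlim_j(\Delta_{V_j^Q}, M_j^Q) \cong \varprojlim_{j \ge j_0}(\Delta_{V_j}, M_j^Q)$ and likewise for~$\widetilde{Q}$. Just as in the proof of Lemma~\ref{l:medidas Bratteli}, the diagonal matrices $B_j^Q = \mathrm{diag}\big(s_j^Q(v)\big)_{v \in V_j}$ conjugate the system $(\Delta_{V_j}, M_j^Q)$ to $(\widetilde{\Delta}_j^Q, N_j)$, where $\widetilde{\Delta}_j^Q \= \{ x \in \RR^{V_j} \mid x \ge 0, \ \sum_v s_j^Q(v) x_v = 1 \}$; and since $\vec{s}_j^Q = N_j^t \vec{s}_{j - 1}^Q$ for $j \ge j_0 + 1$, the linear constraint defining $\widetilde{\Delta}_j^Q$ is transported along the $N_j$, so along the inverse limit it reduces to the single constraint at level $j_0$. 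Consequently the space of invariant probability measures of $(X_{B_Q}, V_{B_Q})$ is affinely homeomorphic to
$$ S^Q \= \mathcal{K} \cap \{ L^Q = 1 \}, \qquad \mathcal{K} \= \varprojlim_{j \ge j_0}\big( \RR^{V_j}_{\ge 0}, N_j \big) \ \subset\ \mathcal{L} \= \varprojlim_{j \ge j_0}\big( \RR^{V_j}, N_j \big), $$
where $L^Q \colon \mathcal{L} \to \RR$ is the continuous linear functional $(x_j)_j \mapsto \sum_v s_{j_0}^Q(v) (x_{j_0})_v$; likewise the measure space of $(X_{B_{\widetilde{Q}}}, V_{B_{\widetilde{Q}}})$ is affinely homeomorphic to $S^{\widetilde{Q}} \= \mathcal{K} \cap \{ L^{\widetilde{Q}} = 1 \}$, with $L^{\widetilde{Q}}$ given by the $s_{j_0}^{\widetilde{Q}}(v)$. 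The essential point is that $\mathcal{K}$ and $\mathcal{L}$ depend only on the common bonding matrices $N_j$, $j \ge j_0 + 1$: thus $S^Q$ and $S^{\widetilde{Q}}$ are two compact bases of the \emph{same} cone $\mathcal{K}$.

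It remains to compare these two bases. Both $L^Q$ and $L^{\widetilde{Q}}$ are strictly positive on $\mathcal{K} \setminus \{ 0 \}$: if $(x_j)_j \in \mathcal{K}$ and $L^Q\big((x_j)_j\big) = 0$ then $x_{j_0} = 0$, and as every vertex of each $V_j$ receives an edge while the $N_j$ have nonnegative entries, $N_{j + 1} x_{j + 1} = x_j = 0$ forces $x_{j + 1} = 0$, whence $(x_j)_j = 0$. Hence the radial map $x \mapsto x / L^{\widetilde{Q}}(x)$ is a homeomorphism $S^Q \to S^{\widetilde{Q}}$; being radial it permutes the extreme rays of $\mathcal{K}$, so it sends $\mathrm{ext}\, S^Q$ onto $\mathrm{ext}\, S^{\widetilde{Q}}$ and, being a homeomorphism, $\overline{\mathrm{ext}\, S^Q}$ onto $\overline{\mathrm{ext}\, S^{\widetilde{Q}}}$. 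Now $S^Q$ and $S^{\widetilde{Q}}$ are metrizable Choquet simplices (by the ergodic decomposition theorem), so each of their points carries a unique representing probability measure concentrated on the respective extreme boundary; transporting these representing measures along the homeomorphism of extreme boundaries obtained above yields an affine homeomorphism $S^Q \to S^{\widetilde{Q}}$, and composing with the earlier identifications proves the lemma. I expect this last step to be the only genuine obstacle: the radial map is \emph{not} affine, so it cannot be used directly, and one must pass through the barycentre calculus of Choquet simplices — equivalently, observe that $\mathcal{K}$ is a lattice cone and invoke the fact that any two compact bases of a metrizable lattice cone are affinely homeomorphic.
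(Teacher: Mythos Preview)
Your overall strategy coincides with the paper's: dispose of the degenerate case where $Q(k)=k-1$ for all large $k$ by unique ergodicity, and in the remaining case apply Lemma~\ref{l:medidas Bratteli} together with the observation that $B_Q$ and $B_{\widetilde Q}$ agree from some level $j_0$ on. The paper's proof stops there, simply asserting that ``the corresponding transition matrices are the same'' and that the conclusion follows from Lemma~\ref{l:medidas Bratteli}.

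Where you diverge from the paper is precisely where you are more careful than it is. You correctly point out that only the \emph{incidence} matrices $N_j$ coincide for $j>j_0$, while the stochastic matrices $M_j=B_{j-1}N_jB_j^{-1}$ need not, because the path counts $s_j(v)$ depend on the initial levels of the diagram (indeed $s_j(j)=S_{j-1}$, and the two sequences $(S_k^Q)$, $(S_k^{\widetilde Q})$ generally differ for all $k$). The paper glosses over this; your reduction to two compact bases $S^Q$, $S^{\widetilde Q}$ of the \emph{same} cone $\mathcal K=\varprojlim_{j\ge j_0}(\RR^{V_j}_{\ge 0},N_j)$ is the honest statement of what remains to be proved.

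The one soft spot is your last step. The radial map $x\mapsto x/L^{\widetilde Q}(x)$ does give a homeomorphism $\partial_e S^Q\to\partial_e S^{\widetilde Q}$, and pushing forward the unique Choquet boundary measure $\mu_x$ and taking barycentres yields an affine \emph{bijection} $\Psi:S^Q\to S^{\widetilde Q}$. But continuity of $\Psi$ is not automatic: the assignment $x\mapsto\mu_x$ is continuous only for Bauer simplices, and nothing here forces $\partial_e S^Q$ to be closed. Your alternative formulation --- that any two compact metrizable bases of a lattice cone are affinely homeomorphic --- is the correct fact to invoke, but it is not elementary and your sketch does not prove it. One clean way to close the gap in this specific setting is to note that the two Bratteli--Vershik systems are Kakutani equivalent (they share a common induced system on the clopen set of paths passing through a fixed vertex of $V_{j_0}$), and Kakutani equivalence of minimal Cantor systems induces an affine homeomorphism of invariant-measure simplices; alternatively one can appeal to the dimension-group formalism, where both systems have the same ordered group $\varinjlim_j(\ZZ^{V_j},N_j^t)$ and changing the order unit within a simple dimension group with Riesz interpolation does not alter the state space up to affine homeomorphism.
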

\begin{proof}
Suppose first that for every large $k \ge 1$ we have $Q(k) = k - 1$, so that~$X_{B_Q}$ is finite.
As $(X_{B_Q}, V_{B_Q})$ is minimal it follows that it is uniquely ergodic.
Using a similar reasoning we obtain that~$(X_{B_{\widetilde{Q}}}, V_{B_{\widetilde{Q}}})$ is also uniquely ergodic, so the lemma holds in this case.

Suppose now that there are infinitely many integers $k \ge 1$ such that $Q(k) \le k - 2$, so that $X_{B_Q}$ is a Cantor set.
As $(X_{B_Q}, V_{B_Q})$ is minimal, it follows that it does not have periodic points, so it satisfies the hypothesis of Lemma~\ref{l:medidas Bratteli}.
Similarly,~$(X_{B_{\widetilde{Q}}}, V_{B_{\widetilde{Q}}})$ also satisfies the hypothesis of Lemma~\ref{l:medidas Bratteli}.

Denote by $\widetilde{V} = \bigcup_{j \ge 0} \widetilde{V}_j$ the set of vertices, $\widetilde{E} = \bigcup_{j \ge 1} \widetilde{E}_j$ the set of edges, and~$\widetilde{\ge}$ the partial order defining~$B_{\widetilde{Q}}$.
Then for each sufficiently large~$j$ we have $\widetilde{V}_j = V_j$, $\widetilde{E}_j = E_j$, and the restriction of~$\widetilde{\ge}$ to~$E_j$ coincides with that of~$\ge$.
It thus follows that the corresponding transition matrices are the same.
Then the desired assertion follows from Lemma~\ref{l:medidas Bratteli}.
\end{proof}

\section{Invariant measures of resonant unimodal maps}
\label{s:invariant of resonant}
The purpose of this section is to prove Theorem~\ref{t:invariant of resonant}, stated in~\S\ref{s:resonant kneading}.
In view of Theorem~\ref{t:measure isomorphism} (\S\ref{ss:measure isomorphism}) and Theorem~\ref{t:reduction to Bratteli} (\S\ref{ss:kneading Bratteli-Vershik}), we just need to prove the analog statement for the corresponding Bratteli-Vershik system defined in~\S\ref{s:Bratteli-Vershik system}.
After calculating the corresponding transition matrices in~\S\ref{ss:transition matrices}, we give the proof of Theorem~\ref{t:invariant of resonant} in~\S\ref{ss:proof of invariant of resonant}.
\subsection{The transition matrices}\label{ss:transition matrices}
Fix a diverging and non-decreasing function $Q : \NN \to \NN$ such that for each $k \ge 2$ we have $Q(k) \le k - 2$.
It is a kneading map, and we consider the ordered Bratteli diagram $B_Q \= (V, E, \ge)$ defined in~\S\ref{ss:kneading Bratteli-Vershik}.

Let $(q_r)_{r \ge 0}$ be the strictly increasing sequence defined by $Q(\NN)=\{q_r \mid \, r \ge 0 \},$ and let $(k_r)_{r \ge 0}$ be the strictly increasing sequence of integers such that for each $r \ge 0$ we have,
$$ Q^{-1}(q_r) = \{ k_r, k_r + 1, \ldots, k_{r + 1} - 1 \}. $$
Note that $q_0 = k_0 = 0$, and $k_1 \ge 3$.
Moreover, for every $r \ge 1$ we have $q_r = Q(k_r) \le k_r - 2$.

From the definition of~$B_Q$ it follows that $V_1 = \{ 1, \ldots, k_1 - 1 \}$, and that for each $r \ge 1$ and $j \in \{ q_{r - 1} + 2, \ldots, q_r + 1 \}$ we have
$$ V_j = \{ j, \ldots, k_{r} \}. $$
In particular $V_2 = \{ 2, \ldots, k_1 \}$.

In the following lemma we use the notation introduced in~\S\ref{ss:invariant measures} for $B = B_Q$.
Part~1 is shown in Remark~1 after the statement of Theorem~2 in~\cite{Bru03}; we provide the short proof for completeness.
Recall that for a finite and non-empty set~$V$ and for $v \in V$ we denote by $\vec{e}_v \in \RR^V$ the vector having all of its coordinates equal to~$0$, except for the coordinate corresponding to~$v$ that is equal to~$1$.
\begin{lemma}
Define $(S_k)_{k \ge0}$ recursively by $S_0 = 1$ and $S_k = S_{k - 1} + S_{Q(k)}$.
Then we have the following properties.
\begin{enumerate}
\item[1.]
For each $j \ge 1$ we have $j + 1 \in V_j$ and $s_j(j) = S_{j - 1}$.
\item[2.]
For each $j = q_{r - 1} + 3, \ldots, q_r + 1$, we have
$$ M_j \in \M_{ \{j - 1, \ldots, k_r \}, \{ j, \ldots, k_r \} }, $$
and
$$
M_j(\cdot, \ell) = \left\{ \begin{array}{ll}
                        \frac{S_{j-2}}{S_{j - 1}} \vec{e}_{j - 1}+ \frac{S_{Q(j - 1)}}{S_{j - 1}}\vec{e}_{j} & \mbox{ if } \ell = j, \\
                         \vec{e}_{\ell} & \mbox{ if } \ell \in \{ j + 1, \ldots, k_{r} \}. \\
                       \end{array}\right. 
$$
Moreover, when $j = q_r + 2$ we have
$$ M_{q_r + 2} \in \M_{ \{ q_r + 1, \ldots, k_r \}, \{ q_r + 2, \ldots, k_{r + 1} \} }, $$
and,
$$
M_{q_r + 2} (\cdot, \ell) = \left\{ \begin{array}{ll}
                        \frac{S_{q_r}}{S_{q_r + 1}} \vec{e}_{q_r + 1}  + \frac{S_{Q(q_r + 1)}}{S_{q_r + 1}}\vec{e}_{q_r + 2} & \mbox{ if } \ell = q_r + 2, \\
                         \vec{e}_\ell & \mbox{ if } \ell \in \{ q_r + 3, \ldots, k_r \}, \\
                         \vec{e}_{q_r + 1}     & \mbox{ if } \ell \in \{ k_r + 1, \ldots, k_{r + 1} \}.
                         \end{array}\right.
$$
\end{enumerate}
\end{lemma}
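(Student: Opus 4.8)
The plan is to establish part~1 first, by strong induction on $j$, and then to obtain part~2 by reading the entries of $M_j = B_{j-1} N_j B_j^{-1}$ off the explicit edge set $E_j$ of $B_Q$, using part~1 to evaluate the relevant numbers $s_{j-1}(v)$ and $s_j(v')$.

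For part~1, the base case $j = 1$ is immediate: the only edge of $E_1$ arriving at $1$ is $0 \to 1$, so $s_1(1) = s_0(0) = 1 = S_0$, and $2 \in V_1$ because $V_1 = \{ 1, \ldots, k_1 - 1 \}$ and $k_1 \ge 3$. For the inductive step, fix $j \ge 2$ and assume $s_i(i) = S_{i-1}$ for all $i < j$. The heart of the step is the auxiliary identity that for every vertex $\ell \in V_{j-1}$ with $\ell > j - 1$ one has $s_{j-1}(\ell) = S_{Q(\ell-1)}$: this is proved by following the incoming edges of $\ell$ level by level, the point being that at the first level $i_0$ at which $\ell$ occurs the unique edge of $E_{i_0}$ reaching $\ell$ starts at $i_0 - 1$, so $s_{i_0}(\ell) = s_{i_0 - 1}(i_0 - 1) = S_{i_0 - 2} = S_{Q(\ell-1)}$ by the inductive hypothesis (for $\ell \le k_1 - 1$ one has instead $i_0 = 1$ and $s_1(\ell) = 1 = S_0 = S_{Q(\ell-1)}$), while at every later level up to $j - 1$ the only edge reaching $\ell$ is its loop $\{ \ell \to \ell \}$, so the value is unchanged. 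Since $j \in V_{j-1}$ by the standing hypothesis $Q(k) \le k-2$ (with $k_1 \ge 3$ covering $j = 2$), the auxiliary identity applies with $\ell = j$ and gives $s_{j-1}(j) = S_{Q(j-1)}$; as the edges of $E_j$ arriving at the vertex $j$ are exactly $\{ j-1 \to j \}$ and the loop $\{ j \to j \}$, we conclude
$$ s_j(j) = s_{j-1}(j-1) + s_{j-1}(j) = S_{j-2} + S_{Q(j-1)} = S_{j-1}, $$
the last equality being the recursion defining $(S_k)_{k \ge 0}$; finally $j + 1 \in V_j$ because $Q(j) \le j - 2$.

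For part~2 I would split into the two ranges of $j$, using the description of $V_j$ recalled just before the lemma, and compute $M_j(v, v') = s_{j-1}(v) N_j(v, v') / s_j(v')$ column by column, with $N_j(v,v') \in \{0,1\}$ read off $E_j$. When $j \in \{ q_{r-1} + 3, \ldots, q_r + 1 \}$ we have $V_{j-1} = \{ j-1, \ldots, k_r \}$, $V_j = \{ j, \ldots, k_r \}$ and $V_j \setminus V_{j-1} = \emptyset$, so $E_j$ consists of $\{ j-1 \to j \}$ and the loops $\{ \ell \to \ell \}$ for $\ell \in \{ j, \ldots, k_r \}$; the $j$th column then comes out as $\tfrac{S_{j-2}}{S_{j-1}} \vec{e}_{j-1} + \tfrac{S_{Q(j-1)}}{S_{j-1}} \vec{e}_j$ by part~1 and the auxiliary identity, and for each $\ell > j$ the loop is the only incoming edge, so $s_j(\ell) = s_{j-1}(\ell)$ and the $\ell$th column is $\vec{e}_\ell$. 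When $j = q_r + 2$ we have instead $V_{j-1} = \{ q_r+1, \ldots, k_r \}$, $V_j = \{ q_r+2, \ldots, k_{r+1} \}$ and $V_j \setminus V_{j-1} = \{ k_r+1, \ldots, k_{r+1} \}$, so $E_j$ additionally contains $\{ q_r+1 \to \ell \}$ for $\ell \in \{ k_r+1, \ldots, k_{r+1} \}$; for such $\ell$ the vertex first occurs at level $q_r + 2$, hence $s_{q_r+2}(\ell) = s_{q_r+1}(q_r+1) = S_{q_r}$ and the $\ell$th column is $\vec{e}_{q_r+1}$, while the columns indexed by $\{ q_r+2, \ldots, k_r \}$ are handled exactly as in the first case.

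The substance of the argument is purely combinatorial bookkeeping: for each vertex $v'$ of $V_j$ one must decide which of the three kinds of edges making up $E_j$ actually reach $v'$, and one must keep the index ranges, the inequalities $q_r = Q(k_r) \le k_r - 2$, and the small-$j$ exceptions (where $V_1$ obeys its own definition and $k_1 \ge 3$ is invoked) mutually consistent. I expect the only real obstacle to be organizing the ``first occurrence'' computation in part~1 so that the same induction yields both $s_j(j) = S_{j-1}$ and the off-diagonal identity $s_{j-1}(\ell) = S_{Q(\ell-1)}$, since part~2 relies on both; once that is in place, every entry of $M_j$ drops out of the stochastic normalization $M_j = B_{j-1}N_jB_j^{-1}$.
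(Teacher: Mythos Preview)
Your proposal is correct and follows essentially the same approach as the paper: part~1 is proved by strong induction on~$j$, with the key step being the computation $s_{j-1}(j) = S_{Q(j-1)}$ via the first level at which the vertex~$j$ appears, and part~2 is obtained by reading $N_j$ off the explicit edge set and normalizing. The only organizational difference is that you state the auxiliary identity $s_{j-1}(\ell) = S_{Q(\ell-1)}$ for all $\ell \in V_{j-1}$ with $\ell > j-1$ up front, whereas the paper derives the special case $s_{j_0}(j_0+1) = S_{Q(j_0)}$ inside the induction and later refers back to ``the (proof of) part~1'' when it is needed again in part~2.
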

\begin{proof}
\

\partn{1}
Since $V_1 = \{ 1, \ldots, k_1 - 1 \}$ and $k_1 \ge 3$, we have that $2 \in V_1$.
On the other hand, for each $r \ge 1$ we have $q_r \le k_r - 2$ so for each $j \ge 2$ the set~$V_j$ contains $j + 1$.
In particular for each $j \ge 2$ there are precisely~$2$ edges in~$E_j$ arriving at $j \in V_{j}$.

We will prove by induction that for each $j \ge 1$ we have $s_j(j) = S_{j - 1}$.
This clearly holds for $j = 1, 2$.
Fix $j_0 \ge 2$ and suppose that this equality holds for all $j = 1, \ldots, j_0$.
From the definition of~$B_Q$ we have
$$ s_{j_0 + 1}(j_0 + 1) = s_{j_0}(j_0) + s_{j_0}(j_0 + 1). $$
So we just need to prove that $s_{j_0}(j_0 + 1) = S_{Q(j_0)}$.
If $j_0 \le k_1 - 1$, then $s_{j_0}(j_0 + 1) = s_2(j_0 + 1) = 1 = S_{Q(j_0)}$.
Suppose now that $j_0 \ge k_1$ and let $r \ge 1$ be such that $j_0 \in \{ k_r, \ldots, k_{r + 1} - 1 \}$.
Then $j_0 + 1 \in V_{q_r + 2}$, but $j_0 + 1 \notin V_{q_r + 1}$, so
$$ s_{j_0}(j_0 + 1) = s_{q_r + 2}(j_0 + 1) = s_{q_r + 1}(q_r + 1) = S_{q_r} = S_{Q(j_0)}. $$

\partn{2}
Observe that by definition for each $k \in V_{j - 1}$ and $\ell \in V_{j}$ we have $M_j(k, \ell) = s_{j}(\ell)^{-1} N_j(k, \ell) s_{j - 1}(k)$.

Fix $r \ge 1$.
We first consider the case when $j \in \{ q_{r - 1} + 3, \ldots, q_{r} + 1 \}.$
Then we have $N_j(j - 1, j) = 1$, for each $\ell = j, \ldots, k_{r}$ we have $N_j(\ell, \ell) = 1$, and the rest of the entries of~$N_j$ are all equal to~$0$.
Hence, by the (proof of) part~1 we have
$$ M_j( \cdot, j)
=
\frac{s_{j - 1}(j - 1)}{s_{j}(j)} \vec{e}_{j - 1} + \frac{s_{j - 1}(j)}{s_{j}(j)} \vec{e}_{j}
=
\frac{S_{j-2}}{S_{j - 1}} \vec{e}_{j - 1}+ \frac{S_{Q(j - 1)}}{S_{j - 1}}\vec{e}_{j}, $$
and for each $\ell = j + 1, \ldots, k_{r}$ we have $M( \cdot, \ell) = s_{j}(\ell)^{-1} s_{j - 1}(\ell) \vec{e}_\ell = \vec{e}_\ell$.

We suppose now that $j = q_r + 2$.
Then we have $N_j(j - 1, j) = 1$, for each $\ell = j, \ldots, k_r$ we have $N_j(\ell, \ell) = 1$, for each $\ell = k_r + 1, \ldots, k_{r + 1}$ we have $N_j(j - 1, \ell) = 1$, and the rest of the entries of~$N_j$ are all equal to~$0$.
Hence, by the (proof of) part~1 we have
$$ M_j( \cdot, j)
=
\frac{s_{j - 1}(j - 1)}{s_{j}(j)} \vec{e}_{j - 1} + \frac{s_{j - 1}(j)}{s_{j}(j)} \vec{e}_{j}
=
\frac{S_{j-2}}{S_{j - 1}} \vec{e}_{j - 1}+ \frac{S_{Q(j - 1)}}{S_{j - 1}}\vec{e}_{j}, $$
for each $\ell = j + 1, \ldots, k_{r}$ we have $M( \cdot, \ell) = s_{j}(\ell)^{-1} s_{j - 1}(\ell) \vec{e}_\ell = \vec{e}_\ell$, and for each $\ell = k_r + 1, \ldots, k_{r + 1}$ we have $M(\cdot, \ell) = s_{j}(\ell)^{-1}s_{j - 1}(j - 1) \vec{e}_{j - 1} = \vec{e}_{j - 1}$.
\end{proof}
The following lemma is a direct consequence of the previous one.
We leave the straight forward proof to the reader.
\begin{lemma}
\label{l:multiplicacion}
Given $r \ge 1$ let $r' \ge r$ be the integer such that $k_r \in \{q_{r'} + 2, \ldots, q_{r' + 1} + 1 \}$.
Then the rank of the matrix
$$ M_{q_{r} + 2}\cdots M_{k_{r}} \in \M_{V_{q_r + 1}, V_{k_r}}$$
is equal to $r' - r + 2$.
In fact, if for each~$n \in \{ q_r, \ldots, k_r - 1 \}$ we define the vector $\vec{v}(n) \in \RR^{V_{q_r + 1}}$ by,
$$ \vec{v}(n)
\=
\frac{1}{S_{n}} \left( S_{q_{r}} \vec{e}_{q_r + 1} + \sum_{t = q_r + 1}^{n} S_{Q(t)} \vec{e}_{t + 1} \right),$$
then this matrix is equal to,
$$
\left( \begin{array}{lllllll}
                    \vec{v}(k_r - 1), & \underbrace{ \vec{v}(q_{r}), \cdots, \vec{v}(q_{r})}_{k_{r+1}-k_r }, & \underbrace{ \vec{v}(q_{r + 1}), \cdots, \vec{v}(q_{r + 1})}_{k_{r + 2} - k_{r + 1}}, &  \cdots,   &
                      \underbrace{\vec{v}(q_{r'}), \cdots, \vec{v}(q_{r'})}_{k_{r' + 1} - k_{r'}}\\
                    \end{array} \right ).
$$
\end{lemma}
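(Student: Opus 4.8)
The plan is to compute the product $M_{q_r + 2}\cdots M_{k_r}$ explicitly, by multiplying the factors one at a time from left to right while tracking the columns of the partial products, and then to read off the rank from the resulting description. The one identity that makes everything fit together is the recursion satisfied by the vectors $\vec{v}(n)$: directly from the definition one has $\vec{v}(q_r) = \vec{e}_{q_r + 1}$ and, for every $n \ge q_r + 1$,
$$ \vec{v}(n) = \frac{S_{n - 1}}{S_n}\vec{v}(n - 1) + \frac{S_{Q(n)}}{S_n}\vec{e}_{n + 1}, $$
which is exactly the shape of the nontrivial column of the transition matrices computed in the preceding lemma.

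The core of the argument is an induction on $m$ running from $q_r + 2$ to $k_r$, proving that the partial product $P_m \= M_{q_r + 2}\cdots M_m \in \M_{V_{q_r + 1}, V_m}$ has the following columns: letting $u$ range over the indices of the ``special'' transition matrices $M_{q_u + 2}$ already used (that is, $r \le u$ with $q_u + 2 \le m$), the column of $P_m$ indexed by $m$ equals $\vec{v}(m - 1)$; each column indexed by $\ell$ with $m < \ell \le k_r$ equals $\vec{e}_\ell$; and, for each such $u$, the columns indexed by $k_u + 1, \ldots, k_{u + 1}$ all equal $\vec{v}(q_u)$. The base case $P_{q_r + 2} = M_{q_r + 2}$ is immediate from the preceding lemma. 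For the inductive step one splits into the two cases of that lemma. If $M_m$ is a shift matrix, then only the column indexed by $m$ changes, becoming $\tfrac{S_{m - 2}}{S_{m - 1}}\vec{v}(m - 2) + \tfrac{S_{Q(m - 1)}}{S_{m - 1}}\vec{e}_m = \vec{v}(m - 1)$ by the recursion, while the column indexed by $m - 1$ is absorbed into it. If $M_m = M_{q_u + 2}$ is a special matrix (necessarily with $u \ge r + 1$), then the column indexed by $m$ becomes $\tfrac{S_{q_u}}{S_{q_u + 1}}\vec{v}(q_u) + \tfrac{S_{Q(q_u + 1)}}{S_{q_u + 1}}\vec{e}_{q_u + 2} = \vec{v}(q_u + 1) = \vec{v}(m - 1)$, the columns indexed by $k_u + 1, \ldots, k_{u + 1}$ all become $\vec{v}(q_u)$ (the new repeated block), and the remaining columns are carried over unchanged. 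Throughout, the description of the sets $V_j$ recalled before the preceding lemma, together with the inequalities $q_u \le k_r - 2$ valid for $r \le u \le r'$, is used to check that the column indices involved genuinely lie in the relevant index ranges.

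Evaluating the inductive statement at $m = k_r$ yields exactly the matrix displayed in the statement: there is no longer a tail of basis vectors (since $k_r$ is the last index), and the last special index used is $r'$, because $k_r$ lies in the block $\{q_{r'} + 2, \ldots, q_{r' + 1} + 1\}$; hence the columns are $\vec{v}(k_r - 1)$ followed, for $u = r, r + 1, \ldots, r'$, by $k_{u + 1} - k_u$ copies of $\vec{v}(q_u)$. It then remains to see that the $r' - r + 2$ vectors $\vec{v}(k_r - 1), \vec{v}(q_r), \ldots, \vec{v}(q_{r'})$ are linearly independent. This is a triangularity argument: the highest-index coordinate on which $\vec{v}(n)$ is nonzero is the coordinate $n + 1$, so the corresponding top coordinates $q_r + 1 < q_{r + 1} + 1 < \cdots < q_{r'} + 1 < k_r$ are pairwise distinct, and peeling the vectors off in decreasing order of top coordinate shows they span a space of dimension $r' - r + 2$. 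Therefore the rank of $M_{q_r + 2}\cdots M_{k_r}$ equals $r' - r + 2$.

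The main obstacle is the bookkeeping in the induction. As $m$ sweeps from $q_r + 2$ up to $k_r$ it crosses several of the blocks $\{q_u + 2, \ldots, q_{u + 1} + 1\}$, and one must keep straight which columns are still standard basis vectors, which have already collapsed into a repeated block $\vec{v}(q_u)$, and that the boundary indices $k_u$ and $q_u$ interleave with $m$ exactly as the claimed form requires — this is precisely where the hypothesis that $k_r$ lies in the block indexed by $r'$ enters. Once the right inductive statement has been isolated, each step reduces to a one-line application of the formulas for the $M_j$ together with the recursion for $\vec{v}(n)$, so the remaining computations are routine.
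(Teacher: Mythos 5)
Your proof is correct, and it is exactly the ``straightforward'' computation that the paper leaves to the reader: an induction on the partial products $M_{q_r+2}\cdots M_m$ using the column formulas of the preceding lemma together with the recursion $\vec{v}(n)=\tfrac{S_{n-1}}{S_n}\vec{v}(n-1)+\tfrac{S_{Q(n)}}{S_n}\vec{e}_{n+1}$, followed by the triangularity observation that the top nonzero coordinate of $\vec{v}(n)$ is $n+1$, which gives the rank $r'-r+2$. Nothing is missing; the bookkeeping of the blocks $\{k_u+1,\ldots,k_{u+1}\}$ and the fact that $q_{r'}+2\le k_r$ are handled as they must be.
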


\subsection{Proof of Theorem~\ref{t:invariant of resonant}}\label{ss:proof of invariant of resonant}
Let $(q_r)_{r \ge 0}$, $(b(r))_{r \ge 0}$, $Q$, $(S_k)_{k \ge 0}$, $(I_r)_{r \ge 0}$ and $( \Xi_r )_{r \ge 0}$ be as in the statement of the theorem.
We will use several times the fact that the sequence $(b(r) - r)_{r \ge 0}$ is non-decreasing.

In view of Theorem~\ref{t:measure isomorphism} and Theorem~\ref{t:reduction to Bratteli}, it is enough to prove that the space of invariant probability measures of the Bratteli-Vershik system $(X_{B_Q}, V_{B_Q})$ is affine homeomorphic to~$\varprojlim_{r} (\Delta_{I_r}, \Xi_r)$.
Suppose first that for each $r \ge 0$ be have $b(r) = r$, so that the inverse limit $\varprojlim_{r} (\Delta_{I_r}, \Xi_r)$ is reduced to a point.
Then for each $r \ge 0$ we have $Q(q_r + 1) = q_r$, and Lemma~\ref{l:unique ergodicity} and Theorem~\ref{t:reduction to Bratteli} imply that $(X_{B_Q}, V_{B_Q})$ is uniquely ergodic.
So the theorem is verified in this case.
Therefore we can assume that there is an integer $r_0 \ge 1$ such that $b(r_0) \ge r_0 + 1$.
A direct computation shows that for each $k \ge q_{b(r_0)} + 1$ we have $Q(k) \le k - 2$.
Let $\widetilde{Q} : \NN \to \NN$ be defined by,
$$ \widetilde{Q}(k) = \begin{cases}
0 & \text{if $k = 0, \ldots, q_{b(r_0)}$}, \\
Q(k) & \text{if $k \ge q_{b(r_0)} + 1$}.
\end{cases} $$
It is a resonant kneading map such that for each $k \ge 2$ we have $\widetilde{Q}(k) \le k - 2$.
In view of Lemma~\ref{l:basic Bratteli-Vershik} we may assume, replacing~$Q$ by~$\widetilde{Q}$ if necessary, that for each $k \ge 2$ we have $Q(k) \le k - 2$.
Then the resonant kneading map~$Q$ satisfies the hypothesis considered in the previous subsection.
We will keep the notation introduced there.
Note in particular that for each $r \ge 1$ we have $k_r = q_{b(r)} + 1$.

For each $r\geq 0$ denote by~$A_r$ the stochastic matrix in~$\M_{I_r, I_r}$ defined by:
\begin{itemize}
\item $A_r( \cdot ,0) = \frac{S_{q_r}}{S_{q_{r+1}}} \vec{e}_0 + (q_{r+1} - q_r) \frac{S_{Q(q_{r+1})}}{S_{q_{r+1}}} \vec{e}_1$;
\item for each $s=1,\ldots, b(r)-r$, we have $A_r(\cdot,s)= \vec{e}_{s + 1}$;
\item $A_r(\cdot, b(r)-r) = \vec{e}_0$.
\end{itemize}
We have
\begin{equation}
\label{e:det1}
\det(A_r)
=
(q_{r+1} - q_r)\frac{S_{Q(q_{r+1})}}{S_{q_{r+1}}}
=
1 - \frac{S_{q_r}}{S_{q_{r+1}}}.
\end{equation}
Moreover, denote by $\Theta_{r}:\RR^{I_{r + 1}}\to \RR^{I_{r}}$ the linear map defined by
$$
\Theta_{r}(x_0,\ldots, x_{b(r + 1)- (r + 1)})
=
\left( x_0,\ldots,x_{b(r) - r - 1}, \sum_{j=b(r) - r}^{b(r + 1)- (r + 1)}x_j \right).
$$

In view of Lemma~\ref{l:medidas Bratteli}, to prove Theorem~\ref{t:invariant of resonant} we just need to prove that $\varprojlim_{j} (\Delta_{V_j}, M_j)$ is affine homeomorphic to $\varprojlim_{r} (\Delta_{I_r}, \Xi_r)$.
We will show first that $\varprojlim_{j} (\Delta_{V_j}, M_j)$ is affine homeomorphic to $\varprojlim_{r} (\Delta_{I_r}, A_r\Theta_r)$, and then complete the proof with Lemma~\ref{l:model}, below, by showing that this last space is affine homeomorphic to $\varprojlim_{r} (\Delta_{I_r}, \Xi_r)$.

To show that $\varprojlim_{j} (\Delta_{V_j}, M_j)$ is affine homeomorphic to $\varprojlim_{j} (\Delta_{I_r}, A_r \Theta_r)$, for each $r \ge 0$ let $\Pi_r:\RR^{V_{q_r + 1}}\to \RR^{I_r}$ be the stochastic linear map defined by
$$ \Pi_r((x_{q_r + 1}, x_{q_r + 2}, \ldots, x_{q_{b(r)} + 1}))
=
\left( x_{q_r + 1}, \sum_{t=q_{r} + 2}^{q_{r + 1} + 1} x_t, \ldots, \sum_{t=q_{b(r)} + 2}^{q_{b(r)} + 1} x_t \right).
$$
Note that $\Pi_r$ maps $\Delta_{V_r}$ onto $\Delta_{I_r}$.
On the other hand, a direct computation shows that
\begin{equation}
\label{det0}
\Pi_rM_{q_r + 2} \cdots M_{q_{r+1} + 1}
=
A_r\Theta_r\Pi_{r+1}.
\end{equation}
Therefore the sequence of linear maps $(\Pi_r)_{r \ge 1}$ defines a continuous linear map
$$
\Pi : \varprojlim_{j}(\RR^{V_j}, M_j) \to \varprojlim_{j} (\RR^{I_r}, A_r\Theta_r),
$$
mapping $\varprojlim_{j}(\Delta_{V_j}, M_j)$ onto $\varprojlim_{j} (\Delta_{I_r}, A_r\Theta_r)$.

Given $r \ge 0$ observe that in Lemma~\ref{l:multiplicacion} we have $r' = b(r) - 1$, so this lemma implies that the rank of the matrix $M_{q_r + 2} \cdots M_{q_{b(r)} + 1}$ is equal to $b(r) - r + 1$, which is equal to the dimension of~$\RR^{I_r}$.
As~$\Pi$ is onto, this shows that~$\Pi$ is a homeomorphism, and that the inverse limits $\varprojlim_{j}(\Delta_{V_j}, M_j)$ and $\varprojlim_{j} (\Delta_{I_r}, A_r\Theta_r)$ are affine homeomorphic.

\begin{lemma}\label{l:model}
The spaces $\varprojlim_{r} (\Delta_{I_r}, A_r \Theta_r)$ and $\varprojlim_{r} (\Delta_{I_r}, \Xi_r)$ are affine homeomorphic.
\end{lemma}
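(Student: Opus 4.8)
The plan is to write each bonding map $A_r \Theta_r$ as the model map $\Xi_r$ modified by a stochastic correction close to the identity, and then to absorb all the corrections into an invertible limit whose existence is guaranteed by~\eqref{e:positive determinant}. Concretely, put $\alpha_r \= S_{q_r}/S_{q_{r + 1}} \in (0, 1)$ and let $B_r \in \M_{I_r, I_r}$ be the stochastic matrix with $B_r(\vec e_1) = \alpha_r \vec e_0 + (1 - \alpha_r) \vec e_1$ and $B_r(\vec e_s) = \vec e_s$ for $s \neq 1$. A direct check on the basis $\{ \vec e_j \mid j \in I_{r + 1} \}$ shows that $A_r \Theta_r = B_r \circ \Xi_r$; here $B_r$ is invertible with $\det B_r = 1 - \alpha_r$, and $\| B_r - \mathrm{id} \|_1 \le 2 \alpha_r$ as an operator on $(\RR^{I_r}, \| \cdot \|_1)$. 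Note that~\eqref{e:positive determinant} is precisely the statement that $\prod_{r \ge 0} (1 - \alpha_r) > 0$, equivalently $\sum_{r \ge 0} \alpha_r < + \infty$; this summability is the only quantitative input the proof would need. Finally, since an inverse limit is unchanged if finitely many initial levels are dropped, and since by the reductions already in force $d_r \= b(r) - r$ is non-decreasing with $d_r \ge 1$ for all sufficiently large $r$, I would fix $r_0$ with $d_r \ge 1$ for all $r \ge r_0$ and work throughout with the levels $r \ge r_0$.

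Next I would telescope over suitable blocks so as to push every correction to the front of its block. Put $\rho_0 \= r_0$ and $\rho_{k + 1} \= b(\rho_k)$; since $\rho_{k + 1} = \rho_k + d_{\rho_k} \ge \rho_k + 1$, the sequence $(\rho_k)_{k \ge 0}$ is strictly increasing and cofinal. For $c \in [0, 1)$ and $m \ge 1$ let $B^{(m)}_c$ be the stochastic matrix $\vec y \mapsto \vec y + c\, y_m (\vec e_{m - 1} - \vec e_m)$, so that $B_r = B^{(1)}_{\alpha_r}$, $\det B^{(m)}_c = 1 - c$ and $\| B^{(m)}_c - \mathrm{id} \|_1 \le 2 c$; from the defining formula of $\Xi_\ell$ and the identity $x_m = (\Xi_\ell \vec x)_{m + 1}$ one obtains the commutation rule $\Xi_\ell \circ B^{(m)}_c = B^{(m + 1)}_c \circ \Xi_\ell$ whenever $1 \le m \le d_\ell - 1$. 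Applying this rule repeatedly to move each correction to the left inside the block $\{ \rho_k, \ldots, \rho_{k + 1} - 1 \}$ — which is legitimate because $d_\ell \ge d_{\rho_k} = \rho_{k + 1} - \rho_k$ for $\ell \ge \rho_k$ keeps every active index in range — I would obtain
$$ (A_{\rho_k} \Theta_{\rho_k}) \cdots (A_{\rho_{k + 1} - 1} \Theta_{\rho_{k + 1} - 1}) = D_k \circ \chi_k, $$
where $\chi_k \= \Xi_{\rho_k} \cdots \Xi_{\rho_{k + 1} - 1}$ is a stochastic matrix with entries in $\{ 0, 1 \}$ and $D_k \= B^{(1)}_{\alpha_{\rho_k}} B^{(2)}_{\alpha_{\rho_k + 1}} \cdots B^{(d_{\rho_k})}_{\alpha_{\rho_{k + 1} - 1}} \in \M_{I_{\rho_k}, I_{\rho_k}}$ is an invertible stochastic matrix with $\det D_k = \prod_{\rho_k \le r < \rho_{k + 1}} (1 - \alpha_r)$ and $\| D_k - \mathrm{id} \|_1 \le \exp \bigl( 2 \sum_{\rho_k \le r < \rho_{k + 1}} \alpha_r \bigr) - 1$. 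In particular $\sum_k \| D_k - \mathrm{id} \|_1 < + \infty$ and $\prod_k \det D_k = \prod_{r \ge r_0} (1 - \alpha_r) > 0$.

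Because telescoping along a cofinal subsequence does not change an inverse limit, $\varprojlim_r (\Delta_{I_r}, A_r \Theta_r) = \varprojlim_k (\Delta_{I_{\rho_k}}, D_k \chi_k)$ and $\varprojlim_r (\Delta_{I_r}, \Xi_r) = \varprojlim_k (\Delta_{I_{\rho_k}}, \chi_k)$, so it would remain to show that these two inverse limits are affinely homeomorphic. I would establish this by comparing the finite compositions $(D_0 \chi_0) \cdots (D_{n - 1} \chi_{n - 1})$ and $\chi_0 \cdots \chi_{n - 1}$: transporting each $D_k$ to the left through the preceding factors $\chi_j$ rewrites the first composition as $\bigl( \prod_{k < n} D_k^{\sharp} \bigr) \chi_0 \cdots \chi_{n - 1}$, up to a remainder controlled by the same block sums of the $\alpha_r$; the transported corrections $D_k^{\sharp}$ remain stochastic, their partial products form a Cauchy sequence because $\sum_k \| D_k - \mathrm{id} \|_1 < + \infty$, and the resulting limit is invertible because its determinant is $\pm \prod_k \det D_k \neq 0$. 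This invertible limit should conjugate the nested sub-simplices cut out at each level by the two systems, compatibly with the bonding maps, and thereby produce the affine homeomorphism. When $\sE$ is finite the dimensions of the $\Delta_{I_r}$ are eventually constant, the $\chi_k$ are eventually cyclic permutations, and the argument collapses to the elementary fact that a product of invertible stochastic matrices with $\sum \| \cdot - \mathrm{id} \|_1 < + \infty$ converges to an invertible matrix.

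I expect the last step to be the main obstacle, precisely in the case $\dim \Delta_{I_r} \to \infty$ (that is, $\sE$ infinite): then the $\chi_k$ are not invertible, a correction sitting at the front of one block cannot simply be moved into an earlier block, and there is a genuine obstruction to a level-preserving linear conjugacy of the two systems — already visible in $\det A_r = 1 - \alpha_r \neq \pm 1$, which shows that $A_r \Theta_r$ and $\Xi_r$ cannot be conjugate. The remedy is to carry out the comparison of the previous paragraph with the limiting transition data, using~\eqref{e:positive determinant} precisely to keep that data invertible. By contrast, the combinatorial verifications in the telescoping step, in particular the boundary behaviour of the commutation rule when an active index reaches $d_\ell$, are routine.
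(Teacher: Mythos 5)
Your preliminary steps are fine: the factorization $A_r\Theta_r=B_r\circ\Xi_r$ is correct (it is just a repackaging of the paper's comparison, since $\Xi_r=A_r'\Theta_r$ for the cyclic shift $A_r'$ and $B_r=A_r(A_r')^{-1}$), and the in-block commutation giving $(A_{\rho_k}\Theta_{\rho_k})\cdots(A_{\rho_{k+1}-1}\Theta_{\rho_{k+1}-1})=D_k\chi_k$ does go through because the block length is exactly $b(\rho_k)-\rho_k$. The genuine gap is the final step, which is the entire content of the lemma and which you leave as a hope. Transporting the corrections $D_k$ further left, through the earlier factors $\chi_0,\ldots,\chi_{k-1}$, is exactly what the commutation rule forbids (the active index would have to exceed $b(\ell)-\ell-1$ for the earlier, shorter blocks), and the fallback estimate --- replace each $D_k$ by the identity and use that stochastic matrices are non-expanding for $\|\cdot\|_1$ --- only bounds the discrepancy between the two $n$-fold compositions by $\sum_{k<n}\|D_k-\mathrm{id}\|_1$, which is finite but does not tend to $0$ as $n\to\infty$. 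So no convergent invertible $E_n$ with $(D_0\chi_0)\cdots(D_{n-1}\chi_{n-1})$ close to $E_n\,\chi_0\cdots\chi_{n-1}$ up to a vanishing error is actually produced, and the determinant computation for the ``transported'' corrections $D_k^{\sharp}$ presupposes a conjugation by the non-invertible matrices $\chi_j$ that does not exist. Worse, even if such an invertible limit $E_\infty$ existed at level $\rho_0$, matching the two nested intersections inside $\Delta_{I_{\rho_0}}$ would not prove the lemma in the main case: when $\sE$ is infinite, $\dim\Delta_{I_r}\to\infty$ and the projection of either inverse limit onto a single level is not injective, so one needs a whole compatible family of identifications intertwining the bonding maps --- precisely what remains unconstructed. (For $\sE$ finite your scheme does close up, since the $\chi_k$ are eventually permutation matrices and the transport is an honest conjugation; but that is the easy, uniquely low-dimensional case.)

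The missing idea --- and the paper's actual route --- is to build the homeomorphism as a limit of compositions with switched tails rather than to normalize by a front correction. For $\vec{v}=(\vec{v}_r)_{r\ge0}\in\varprojlim_r(\Delta_{I_r},A_r\Theta_r)$ and each fixed $r_0$ one sets $B_{r_0}(\vec{v})=\lim_{r\to\infty}\Xi_{r_0}\cdots\Xi_{r-1}(\vec{v}_r)$. Lemma~\ref{l:estimates}, whose only inputs are the pointwise bound $\|A_s(\vec{w})-A_s'(\vec{w})\|_1\le 2(1-\det A_s)$ on $\Delta_{I_s}$, the non-expansiveness of stochastic maps (Lemma~\ref{l:non expanding}), and the summability of $1-\det A_s=S_{q_s}/S_{q_{s+1}}$ coming from~\eqref{e:positive determinant}, shows that this sequence is Cauchy (the two compositions being compared share a long prefix and differ only in a tail whose total correction is small), that the maps $B_{r_0}$ are continuous, affine and compatible with the $\Xi_r$'s, and that the map defined symmetrically in the opposite direction is a two-sided inverse. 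No invertibility of any limit matrix is needed; the tail estimate replaces it, and this is the step your outline would have to supply.
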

To prove this lemma define, for each $r \ge 0$, the stochastic linear map $A_r' : \mathbb{R}^{I_{r}} \to \mathbb{R}^{I_r}$ by $$ A_r' (x_0, \ldots, x_{b(r) - r}) = \left( x_{b(r) - r}, x_0, \ldots, x_{b(r) - r - 1} \right), $$
and observe that $A_r'\Theta_r = \Xi_r$.
The proof of Lemma~\ref{l:model} is based on the following one.
\begin{lemma}\label{l:estimates}
For each $r'' \ge r' \ge r \ge 0$ and $\vec{v} \in \Delta_{I_{r''}}$ we have
\begin{multline*}
\left\| \ (A_{r} \Theta_{r} \cdots A_{r' - 1} \Theta_{r' - 1})(A_{r'}' \Theta_{r'} \cdots A_{r'' - 1}' \Theta_{r'' - 1})(\vec{v})
- \right. \\ \left. -
(A_{r}\Theta_{r} \cdots A_{r'' - 1}\Theta_{r'' - 1})(\vec{v}) \ \right\|_1,
\end{multline*}
\begin{multline*}
\left\| \ (A_{r}' \Theta_{r} \cdots A_{r' - 1}' \Theta_{r' - 1})(A_{r'} \Theta_{r'} \cdots A_{r'' - 1} \Theta_{r'' - 1})(\vec{v})
- \right. \\ \left. -
(A_{r}'\Theta_{r} \cdots A_{r'' - 1}'\Theta_{r'' - 1})(\vec{v}) \ \right\|_1
\end{multline*}
$$ \hfill \le 2 \sum_{s = r'}^{r'' - 1} (1 - \det(A_s)). $$
\end{lemma}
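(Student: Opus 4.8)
The plan is to reduce both displayed inequalities of Lemma~\ref{l:estimates} to a single estimate, and then prove that estimate by a telescoping argument along the index running from~$r'$ to~$r''-1$. The point that makes this work is that both sides of each inequality carry a common stochastic prefix: in the first inequality the maps $A_r\Theta_r,\ldots,A_{r'-1}\Theta_{r'-1}$ appear on both terms, and in the second the maps $A_r'\Theta_r,\ldots,A_{r'-1}'\Theta_{r'-1}$ do. Since a composition of stochastic maps is stochastic, Lemma~\ref{l:non expanding} lets us discard this prefix at no cost, so both inequalities follow once we show, for every $\vec{v}\in\Delta_{I_{r''}}$,
\begin{multline*}
\big\| (A_{r'}'\Theta_{r'}\cdots A_{r''-1}'\Theta_{r''-1})(\vec{v}) - (A_{r'}\Theta_{r'}\cdots A_{r''-1}\Theta_{r''-1})(\vec{v}) \big\|_1 \\ \le 2\sum_{s=r'}^{r''-1}\left(1-\det(A_s)\right).
\end{multline*}

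The key computational input is the discrepancy between $A_s$ and $A_s'$ measured on the unit simplex. From the explicit description of these matrices, $A_s$ and $A_s'$ have the same columns except the zeroth: $A_s'(\vec{e}_0)=\vec{e}_1$, while by the last equality in~\eqref{e:det1} one has $A_s(\vec{e}_0)=\tfrac{S_{q_s}}{S_{q_{s+1}}}\vec{e}_0+\left(1-\tfrac{S_{q_s}}{S_{q_{s+1}}}\right)\vec{e}_1$. Hence $(A_s-A_s')(\vec{e}_0)=\tfrac{S_{q_s}}{S_{q_{s+1}}}(\vec{e}_0-\vec{e}_1)$, so for every $\vec{w}\in\Delta_{I_s}$ with zeroth coordinate~$w_0$,
\[ \|A_s(\vec{w})-A_s'(\vec{w})\|_1 = 2w_0\tfrac{S_{q_s}}{S_{q_{s+1}}}\le 2\left(1-\det(A_s)\right), \]
using~\eqref{e:det1} again. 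Since the grouping map $\Theta_s$ is stochastic, and in particular carries $\Delta_{I_{s+1}}$ into $\Delta_{I_s}$, this yields $\|(A_s\Theta_s)(\vec{z})-(A_s'\Theta_s)(\vec{z})\|_1\le 2(1-\det(A_s))$ for every $\vec{z}\in\Delta_{I_{s+1}}$.

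For the telescope I would set, for $k=r',\ldots,r''$,
\[ \vec{u}_k \= (A_{r'}'\Theta_{r'}\cdots A_{k-1}'\Theta_{k-1})(A_k\Theta_k\cdots A_{r''-1}\Theta_{r''-1})(\vec{v}) \in \Delta_{I_{r'}}, \]
with the convention that an empty product of maps is the identity, so that $\vec{u}_{r'}$ and $\vec{u}_{r''}$ are exactly the two vectors appearing in the displayed estimate. For each $k$, the vectors $\vec{u}_{k+1}$ and $\vec{u}_k$ share the stochastic prefix $A_{r'}'\Theta_{r'}\cdots A_{k-1}'\Theta_{k-1}$ and differ only in that, applied to the common vector $\vec{z}_k \= (A_{k+1}\Theta_{k+1}\cdots A_{r''-1}\Theta_{r''-1})(\vec{v})\in\Delta_{I_{k+1}}$, one uses $A_k'\Theta_k$ and the other uses $A_k\Theta_k$. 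Both $(A_k'\Theta_k)(\vec{z}_k)$ and $(A_k\Theta_k)(\vec{z}_k)$ lie in $\Delta_{I_k}$, so Lemma~\ref{l:non expanding} together with the previous paragraph gives $\|\vec{u}_{k+1}-\vec{u}_k\|_1\le 2(1-\det(A_k))$. Summing over $k=r',\ldots,r''-1$ and using the triangle inequality yields the desired estimate; the second inequality of Lemma~\ref{l:estimates} follows from the identical telescope with the roles of the primed and unprimed maps exchanged, which in fact bounds the same quantity.

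I do not expect a genuine obstacle here, as this is the standard argument for comparing inverse limits of stochastic maps. The only points requiring a little care are the bookkeeping of the endpoints of the telescope (the empty prefix when $k=r'$ and the empty suffix when $k=r''-1$) and checking at each step that the vectors to which Lemma~\ref{l:non expanding} is applied genuinely lie in a unit simplex; the bound $\|A_s(\vec{w})-A_s'(\vec{w})\|_1\le 2(1-\det(A_s))$ on $\Delta_{I_s}$, which drives the whole estimate, is immediate from the explicit matrices and~\eqref{e:det1}.
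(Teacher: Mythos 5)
Your proposal is correct and takes essentially the same route as the paper: the paper's proof consists exactly of the pointwise bound $\| A_r(\vec{v}) - A_r'(\vec{v}) \|_1 \le 2(1 - \det(A_r))$ on $\Delta_{I_r}$ (your column-zero computation via~\eqref{e:det1}) combined with Lemma~\ref{l:non expanding}. Your telescoping with the hybrid vectors $\vec{u}_k$ merely spells out the ``easy consequence'' that the paper leaves implicit, and it is carried out correctly.
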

\begin{proof}
A direct computation shows that for each $r \ge 0$ and $\vec{v} \in \Delta_{I_r}$ we have $\| A_r(\vec{v}) - A_r'(\vec{v}) \|_1 \le 2(1 - \det (A_r))$.
The statement of the lemma is an easy consequence of this fact, and of Lemma~\ref{l:non expanding}. \end{proof}

\begin{proof}[Proof of Lemma~\ref{l:model}]
The hypothesis $\prod_{r \ge 0} \det(A_s) > 0$ implies that $\sum_{r \ge 0} (1 - \det(A_r)) < + \infty$.
So Lemma~\ref{l:estimates} implies that for each $\vec{v} \= (\vec{v}_r)_{r \ge 0}$ in $\varprojlim_{r} (\Delta_{I_r}, A_r \Theta_r)$ and each $r_0 \ge 0$, the sequence
$$ (A_{r_0}' \Theta_{r_0} \cdots A_{r - 1}' \Theta_{r - 1}(\vec{v}_r))_{r \ge r_0} $$
converges to a vector in~$\Delta_{I_{r_0}}$.
We will denote this vector by~$B_{r_0}(\vec{v})$.
It follows from Lemma~\ref{l:estimates} that the map
$$ B_{r_0} : \varprojlim_{r} (\Delta_{I_r}, A_r' \Theta_r) \to \Delta_{I_{r_0}} $$
so defined is continuous and affine.
So for each $r \ge r_0$ we have
$$ B_{r_0} =
(A_{r_0}' \Theta_{r_0} \cdots A_{r - 1}' \Theta_{r - 1}) B_{r}. $$
Therefore the sequence of affine maps~$(B_r)_{r \ge 0}$ induces a continuous affine map
$$ B : \varprojlim_{r} (\Delta_{I_r}, A_r \Theta_r) \to \varprojlim_{r} (\Delta_{I_r}, A_r' \Theta_r). $$

We define in an analogous way a continuous affine map
$$ B' : \varprojlim_{r} (\Delta_{I_r}, A_r' \Theta_r) \to \varprojlim_{r} (\Delta_{I_r}, A_r \Theta_r). $$
It follows from the definition that~$B$ and~$B'$ are inverses of each other.
This proves the lemma.
\end{proof}

\appendix
\section{Indifferent measures and equilibrium states}\label{a:ergodic theory}
The purpose of this appendix is to prove Corollary~\ref{c:ergodic theory interval}.
As the parameters given by (the proof of) the Main Theorem are such that the corresponding logistic map has a diverging kneading map, Corollary~\ref{c:ergodic theory interval} is a direct consequence Lemma~\ref{l:ergodic theory interval} below.
This lemma is an easy consequence of well\nobreakdash-known results in the literature.

Given a unimodal map~$f$ with critical point~$c$ that is of class~$C^3$ on~$[0, 1] \setminus \{ c \}$, we will say that the critical point of~$f$ is \textit{non-flat}, if there is a constant $\ell > 0$ and diffeomorphisms $\varphi, \psi : \RR \to \RR$ of class~$C^3$, such that $\varphi(c) = \psi(f(c)) = 0$ and such that for all $x \in [0, 1]$ near~$c$ we have $|\psi(f(x))| = |\varphi(x)|^\ell$.
If in addition the Schwarzian derivative of~$f$ on $[0, 1] \setminus \{ c \}$ is negative, that is, if we have
$$ \frac{f'''}{f'} - \frac{3}{2} \left(\frac{f''}{f'} \right)^2 < 0, $$
on $[0, 1] \setminus \{ c \}$, then we say that~$f$ is \textit{\Sunimodal}.

In Lemma~\ref{l:ergodic theory interval}, below, the implication $1 \Rightarrow 2$ can be shown as in~\cite[Theorem~2']{Bru96}.
\begin{lemma}\label{l:ergodic theory interval}
Let $f$ be a \Sunimodal{} map whose kneading map diverges.
Then for an invariant probability measure of~$f$ the following properties are equivalent.
\begin{enumerate}
\item[1.]
It is supported on the \pcs{} of~$f$.
\item[2.]
Its Lyapunov exponent is zero.
\item[3.]
It is an equilibrium state of~$f$ for the potential~$- \log |f'|$, and its Lyapunov exponent is zero.
\end{enumerate}
\end{lemma}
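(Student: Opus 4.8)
The plan is to prove the three implications $1 \Rightarrow 2$, $2 \Rightarrow 3$ and $2 \Rightarrow 1$; since $3 \Rightarrow 2$ is part of the statement of property~$3$, these together give $1 \Leftrightarrow 2 \Leftrightarrow 3$. Each of the three properties is affine in~$\mu$ and is detected by the ergodic components of~$\mu$ (for property~$2$ one uses that the Lyapunov exponent is non-negative, so that no cancellation can occur; for property~$3$ that the equilibrium states form a face of the simplex of invariant measures), so I would first reduce to the case where~$\mu$ is ergodic. Write~$c$ for the critical point of~$f$, $K$ for its \pcs{}, and $\chi_\mu \= \int \log|f'| \, d\mu$ for the Lyapunov exponent of~$\mu$. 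Throughout I would use the following facts about our map~$f$: it has no periodic attractor (it has a kneading map), and since it has negative Schwarzian derivative, Singer's theorem then forces every periodic orbit of~$f$ to be hyperbolic repelling; Ruelle's inequality $h_\mu \le \max\{\chi_\mu,0\}$ holds for every invariant measure~$\mu$; and $\chi_\nu \ge 0$ for every invariant measure~$\nu$ (see~\cite{dMevSt93},~\cite{PrzRiv08} and the references therein).

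For $1 \Rightarrow 2$, suppose~$\mu$ is supported on~$K$. By Proposition~\ref{p:continuous unimodal}, $K$ is a Cantor set, $f|_K$ is minimal, and $f|_K$ has zero topological entropy, so the variational principle gives $h_\mu = 0$; note that this alone does \emph{not} force $\chi_\mu = 0$, since Ruelle's inequality bounds entropy by the exponent and not conversely. As $\chi_\mu \ge 0$, the real task is to prove $\chi_\mu \le 0$, and this is where the divergence of the kneading map is used. I would carry it out following~\cite[Theorem~2']{Bru96}: the divergence of the kneading map makes the returns of the orbit of~$c$ to~$c$ slow, and combining this with the Koebe bounded-distortion estimates available for \Sunimodal{} maps one shows, for every~$x \in K$, that $\tfrac1n \log |(f^n)'(x)| \to 0$, and also that $\log|f'| \in L^1(\mu)$ (which amounts to controlling how much $\mu$-mass sits near~$c$); the Birkhoff ergodic theorem then gives $\chi_\mu = 0$. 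I expect this step---the derivative estimate along~$K$ together with the integrability of $\log|f'|$---to be the main obstacle; it is essentially the content of Bruin's argument in~\cite{Bru96}.

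For $2 \Rightarrow 3$, assume $\chi_\mu = 0$. By Ruelle's inequality $h_\mu \le \max\{\chi_\mu,0\} = 0$, so $h_\mu = 0$ and hence $h_\mu - \chi_\mu = 0$. On the other hand, for every invariant measure~$\nu$ one has $\chi_\nu \ge 0$, so Ruelle's inequality gives $h_\nu - \chi_\nu \le 0$. Therefore
$$ \sup_{\nu}\left\{ h_\nu - \int \log|f'| \, d\nu \right\} = 0, $$
and this supremum is attained at~$\mu$. Thus~$\mu$ is an equilibrium state of~$f$ for the potential $-\log|f'|$, and since also $\chi_\mu = 0$, property~$3$ holds.

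Finally, for $2 \Rightarrow 1$, assume $\chi_\mu = 0$; I must show $\mu(K) = 1$, equivalently (as~$\mu$ is ergodic and~$K$ is compact and invariant) that $\operatorname{supp}(\mu) \subseteq K$. First, $c \in \operatorname{supp}(\mu)$: otherwise $\operatorname{supp}(\mu)$ would be a compact invariant set lying at positive distance from~$c$, hence containing no critical point and (by the first fact above) no non-repelling periodic orbit, so by Mañé's hyperbolicity theorem~\cite{dMevSt93} it would be uniformly expanding and~$\mu$ would have $\chi_\mu > 0$. Since $c \in \operatorname{supp}(\mu)$ and $\operatorname{supp}(\mu)$ is closed and invariant, $K = \omega(c) \subseteq \operatorname{supp}(\mu)$. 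For the reverse inclusion I would use the same circle of ideas: for each $\delta > 0$ the set of points whose forward orbit stays at distance $\ge \delta$ from~$c$ is compact, invariant, and contains no critical or non-repelling periodic point, hence is uniformly expanding by Mañé's theorem; an argument using the ergodicity of~$\mu$ and $\chi_\mu = 0$ shows this set has $\mu$-measure zero, so $\mu$-almost every orbit is recurrent to~$c$ and therefore accumulates on the minimal set~$K$. Deducing from this that $\mu(K) = 1$ is a rigidity property of \Sunimodal{} maps---an ergodic invariant measure of zero exponent is carried by the \pcs{}---which I would invoke via the structure of~$f$ near~$K$ (for instance the Hofbauer tower, or the results of~\cite{BruKel98}); this last deduction is the remaining delicate point. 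Putting the three implications together completes the proof.
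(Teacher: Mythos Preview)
Your decomposition $1 \Rightarrow 2$, $2 \Rightarrow 3$, $2 \Rightarrow 1$ is fine and equivalent to the paper's choice of proving $1 \Rightarrow 3$ and $2 \Rightarrow 1$. Your $2 \Rightarrow 3$ argument via Ruelle's inequality and the non-negativity of Lyapunov exponents is exactly what underlies the paper's $1 \Rightarrow 3$; the paper simply packages $1 \Rightarrow 2$ and the equilibrium-state conclusion together by citing that the critical point is persistently recurrent \cite[Proposition~3.1]{Bru98} and then invoking \cite[Lemma~3.10]{BruKel98}. Your route via \cite{Bru96} for $1 \Rightarrow 2$ is the same one the paper points to in the remark preceding the lemma.

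The real discrepancy is in $2 \Rightarrow 1$, where you yourself flag a gap. Your Ma\~n\'e-theorem argument only gets you to ``$\mu$\nobreakdash-a.e.\ orbit accumulates on~$c$'', and you are right that this does not by itself give $\mu(K)=1$; the ``rigidity property'' you hope to cite is not so readily available. The paper avoids this detour entirely with a short direct argument: assuming~$\mu$ (ergodic) is \emph{not} supported on~$K$, choose an interval~$A$ with $\mu(A)>0$ whose concentric double is disjoint from~$K$. Since the doubled interval then misses the entire forward orbit of~$c$, every inverse branch of~$f^n$ over~$A$ extends univalently with definite Koebe space; combined with the absence of wandering intervals (so the components of~$f^{-n}(A)$ shrink), Koebe distortion yields an~$N$ with $|(f^n)'(x)|>2$ for all $n\ge N$ and all $x\in f^{-n}(A)$. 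Picking a $\mu$\nobreakdash-generic point~$x_0$ whose Birkhoff averages of $\log|f'|$ converge to~$\chi_\mu$, the visits of its orbit to~$A$ have frequency~$\mu(A)$, and chaining the derivative bound along these visits gives
\[
\chi_\mu \;=\; \lim_{n\to\infty}\tfrac1n\log|(f^n)'(x_0)| \;\ge\; \frac{\mu(A)}{N}\log 2 \;>\; 0,
\]
contradicting $\chi_\mu=0$. This replaces both your appeal to Ma\~n\'e's theorem and the missing rigidity step, and closes the proof.
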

\begin{proof}
The implication $3 \Rightarrow 2$ being trivial, we just need to prove the implications $1 \Rightarrow 3$ and $2 \Rightarrow 1$.
To prove the implication $1 \Rightarrow 3$ we first observe that by Ruelle's inequality for each invariant probability measure~$\mu$ we have $h_\mu(f) - \int \log |f'| d \mu \le 0$, see~\cite{Led81,Rue78}.
As the critical point~$c$ of~$f$ is persistently recurrent by~\cite[Proposition~3.1]{Bru98}, the implication follows from~\cite[Lemma~3.10]{BruKel98}.

The implication $2 \Rightarrow 1$ follows from a standard argument.
We will only give a sketch of proof here.
Let~$\mu$ be an invariant probability measure whose support is not contained in the \pcs{} of~$f$.
We will show that the Lyapunov exponent of~$\mu$ is positive.
Without loss of generality we assume that~$\mu$ is ergodic, and take a generic point~$x_0 \in [0, 1]$ for~$\mu$ such that in addition
$$ \lim_{n \to + \infty} \tfrac{1}{n} \log |(f^n)'(x_0)| = \int \log |(f^n)'(x_0)| d \mu. $$
Let~$A$ be an interval in~$[0, 1]$ of positive measure such that the interval with the same center as~$A$ and twice the diameter is disjoint from the \pcs{} of~$f$.
As~$f$ does not have wandering intervals, it follows that for $n \ge 1$ the maximal length of a connected component of~$f^{-n}(A)$ goes to zero as $n \to + \infty$.
Using Koebe distortion theorem we conclude that there is $N > 0$ such that for each $n \ge N$ and each $x \in f^{-n}(A)$, we have $|(f^n)'(x)| > 2$.
By the ergodic theorem each generic point~$x_0$ of~$\mu$ visits~$A$ with frequency~$\mu(A)$.
We thus have,
$$ \int \log|f'| d \mu = \lim_{n \to + \infty} \tfrac{1}{n} \log |(f^n)'(x_0)| \ge \frac{\mu(A)}{N} \log 2 > 0. $$
\end{proof}

\bibliographystyle{alpha}
\bibliography{papers}

\def\cprime{$'$} \def\cprime{$'$} \def\cprime{$'$} \def\cprime{$'$}
  \def\soft#1{\leavevmode\setbox0=\hbox{h}\dimen7=\ht0\advance \dimen7
  by-1ex\relax\if t#1\relax\rlap{\raise.6\dimen7
  \hbox{\kern.3ex\char'47}}#1\relax\else\if T#1\relax
  \rlap{\raise.5\dimen7\hbox{\kern1.3ex\char'47}}#1\relax \else\if
  d#1\relax\rlap{\raise.5\dimen7\hbox{\kern.9ex \char'47}}#1\relax\else\if
  D#1\relax\rlap{\raise.5\dimen7 \hbox{\kern1.4ex\char'47}}#1\relax\else\if
  l#1\relax \rlap{\raise.5\dimen7\hbox{\kern.4ex\char'47}}#1\relax \else\if
  L#1\relax\rlap{\raise.5\dimen7\hbox{\kern.7ex
  \char'47}}#1\relax\else\message{accent \string\soft \space #1 not
  defined!}#1\relax\fi\fi\fi\fi\fi\fi}
\begin{thebibliography}{BKSP97}

\bibitem[Alf71]{Alf71}
Erik~M. Alfsen.
\newblock {\em Compact convex sets and boundary integrals}.
\newblock Springer-Verlag, New York, 1971.
\newblock Ergebnisse der Mathematik und ihrer Grenzgebiete, Band 57.

\bibitem[BB04]{BruBru04}
Karen~M. Brucks and Henk Bruin.
\newblock {\em Topics from one-dimensional dynamics}, volume~62 of {\em London
  Mathematical Society Student Texts}.
\newblock Cambridge University Press, Cambridge, 2004.

\bibitem[BDL02]{BarDowLia02}
Guy Barat, Tomasz Downarowicz, and Pierre Liardet.
\newblock Dynamiques associ\'ees \`a une \'echelle de num\'eration.
\newblock {\em Acta Arith.}, 103(1):41--78, 2002.

\bibitem[BK98]{BruKel98}
Henk Bruin and Gerhard Keller.
\newblock Equilibrium states for {$S$}-unimodal maps.
\newblock {\em Ergodic Theory Dynam. Systems}, 18(4):765--789, 1998.

\bibitem[BKSP97]{BruKelsPi97}
Henk Bruin, Gerhard Keller, and Matthias St.~Pierre.
\newblock Adding machines and wild attractors.
\newblock {\em Ergodic Theory Dynam. Systems}, 17(6):1267--1287, 1997.

\bibitem[BL91]{BloLyu91}
A.~M. Blokh and M.~Yu. Lyubich.
\newblock Measurable dynamics of {$S$}-unimodal maps of the interval.
\newblock {\em Ann. Sci. \'Ecole Norm. Sup. (4)}, 24(5):545--573, 1991.

\bibitem[Bru95]{Bru95}
H.~Bruin.
\newblock Combinatorics of the kneading map.
\newblock In {\em Thirty years after Sharkovski\u\i's theorem: new perspectives
  (Murcia, 1994)}, volume~8 of {\em World Sci. Ser. Nonlinear Sci. Ser. B Spec.
  Theme Issues Proc.}, pages 77--87. World Sci. Publ., River Edge, NJ, 1995.
\newblock Reprint of the paper reviewed in MR1361922 (96k:58070).

\bibitem[Bru96]{Bru96}
Henk Bruin.
\newblock Quasi-symmetry of conjugacies between interval maps.
\newblock {\em Nonlinearity}, 9(5):1191--1207, 1996.

\bibitem[Bru98]{Bru98}
Henk Bruin.
\newblock Topological conditions for the existence of absorbing {C}antor sets.
\newblock {\em Trans. Amer. Math. Soc.}, 350(6):2229--2263, 1998.

\bibitem[Bru99]{Bru99}
Henk Bruin.
\newblock Homeomorphic restrictions of unimodal maps.
\newblock In {\em Geometry and topology in dynamics (Winston-Salem, NC,
  1998/San Antonio, TX, 1999)}, volume 246 of {\em Contemp. Math.}, pages
  47--56. Amer. Math. Soc., Providence, RI, 1999.

\bibitem[Bru03]{Bru03}
H.~Bruin.
\newblock Minimal {C}antor systems and unimodal maps.
\newblock {\em J. Difference Equ. Appl.}, 9(3-4):305--318, 2003.
\newblock Dedicated to Professor Alexander N. Sharkovsky on the occasion of his
  65th birthday.

\bibitem[BT07]{BruTod07b}
Henk Bruin and Mike Todd.
\newblock Equilibrium states for interval maps: the potential $-t\log |df|$.
\newblock arXiv:0704.2199v1, 2007.

\bibitem[CE80]{ColEck80}
Pierre Collet and Jean-Pierre Eckmann.
\newblock {\em Iterated maps on the interval as dynamical systems}, volume~1 of
  {\em Progress in Physics}.
\newblock Birkh\"auser Boston, Mass., 1980.

\bibitem[DHS99]{DurHosSka99}
F.~Durand, B.~Host, and C.~Skau.
\newblock Substitutional dynamical systems, {B}ratteli diagrams and dimension
  groups.
\newblock {\em Ergodic Theory Dynam. Systems}, 19(4):953--993, 1999.

\bibitem[dMvS93]{dMevSt93}
Welington de~Melo and Sebastian van Strien.
\newblock {\em One-dimensional dynamics}, volume~25 of {\em Ergebnisse der
  Mathematik und ihrer Grenzgebiete (3) [Results in Mathematics and Related
  Areas (3)]}.
\newblock Springer-Verlag, Berlin, 1993.

\bibitem[Dow91]{Dow91}
Tomasz Downarowicz.
\newblock The {C}hoquet simplex of invariant measures for minimal flows.
\newblock {\em Israel J. Math.}, 74(2-3):241--256, 1991.

\bibitem[GJ00]{GjeJoh00}
Richard Gjerde and {\O}rjan Johansen.
\newblock Bratteli-{V}ershik models for {C}antor minimal systems: applications
  to {T}oeplitz flows.
\newblock {\em Ergodic Theory Dynam. Systems}, 20(6):1687--1710, 2000.

\bibitem[Gla03]{Gla03}
Eli Glasner.
\newblock {\em Ergodic theory via joinings}, volume 101 of {\em Mathematical
  Surveys and Monographs}.
\newblock American Mathematical Society, Providence, RI, 2003.

\bibitem[GLT95]{GraLiaTic95}
Peter~J. Grabner, Pierre Liardet, and Robert~F. Tichy.
\newblock Odometers and systems of numeration.
\newblock {\em Acta Arith.}, 70(2):103--123, 1995.

\bibitem[GM06]{GamMar06}
Jean-Marc Gambaudo and Marco Martens.
\newblock Algebraic topology for minimal {C}antor sets.
\newblock {\em Ann. Henri Poincar\'e}, 7(3):423--446, 2006.

\bibitem[HK90]{HofKel90}
Franz Hofbauer and Gerhard Keller.
\newblock Quadratic maps without asymptotic measure.
\newblock {\em Comm. Math. Phys.}, 127(2):319--337, 1990.

\bibitem[Hof80]{Hof80}
Franz Hofbauer.
\newblock The topological entropy of the transformation {$x\mapsto ax(1-x)$}.
\newblock {\em Monatsh. Math.}, 90(2):117--141, 1980.

\bibitem[HPS92]{HerPutSka92}
Richard~H. Herman, Ian~F. Putnam, and Christian~F. Skau.
\newblock Ordered {B}ratteli diagrams, dimension groups and topological
  dynamics.
\newblock {\em Internat. J. Math.}, 3(6):827--864, 1992.

\bibitem[Led81]{Led81}
Fran{\c{c}}ois Ledrappier.
\newblock Some properties of absolutely continuous invariant measures on an
  interval.
\newblock {\em Ergodic Theory Dynamical Systems}, 1(1):77--93, 1981.

\bibitem[MS03]{MakSmi03}
N.~Makarov and S.~Smirnov.
\newblock On thermodynamics of rational maps. {II}. {N}on-recurrent maps.
\newblock {\em J. London Math. Soc. (2)}, 67(2):417--432, 2003.

\bibitem[Orm97]{Orm97}
Nicholas~S. Ormes.
\newblock Strong orbit realization for minimal homeomorphisms.
\newblock {\em J. Anal. Math.}, 71:103--133, 1997.

\bibitem[PRL08]{PrzRiv08}
Feliks Przytycki and Juan Rivera-Letelier.
\newblock Nice inducing schemes and the thermodynamics of rational maps.
\newblock 2008.
\newblock arXiv:0806.4385v1.

\bibitem[PS08]{PesSen08}
Yakov Pesin and Samuel Senti.
\newblock Equilibrium measures for maps with inducing schemes.
\newblock 2008.
\newblock arXiv:math/0609695v2.

\bibitem[Rue78]{Rue78}
David Ruelle.
\newblock An inequality for the entropy of differentiable maps.
\newblock {\em Bol. Soc. Brasil. Mat.}, 9(1):83--87, 1978.

\end{thebibliography}

\end{document}